\numberwithin{equation}{section}
\newtheorem{thm}{Theorem}[section]
\newtheorem{prop}[thm]{Proposition}
\newtheorem{lemma}[thm]{Lemma}
\newtheorem{cor}[thm]{Corollary}
\theoremstyle{definition}
\newtheorem{dfn}{Definition}[section]
\newtheorem{exm}{Example}[section]
\theoremstyle{remark}
\newtheorem{rmk}{Remark}[section]
\def\N{\mathbb{N}} % entiers naturels
\def\Z{\mathbb{Z}}
\def\R{\mathbb{R}} % r\'eels
\def\C{\mathbb{C}} %complexes
\def\1{~~\mbox{I\hspace{-.6em}1}} % fonction carct\'eristique
\def\Lip{\mbox{Lip\,}}
\numberwithin{equation}{section}
\theoremstyle{plain}
\begin{document}

\begin{frontmatter}
\title{Weak transport inequalities and applications to exponential and oracle inequalities}
\runtitle{Weak transport inequalities and applications}
%\thankstext{T1}{Footnote to the title with the ``thankstext'' command.}

\begin{aug}
\author{\fnms{Olivier} \snm{Wintenberger}\ead[label=e1]{olivier.wintenberger@upmc.fr}\ead[label=u1,url]{http://wintenberger.fr}}

\runauthor{O. Wintenberger}

\affiliation{Sorbonne Universit\'es, UPMC Univ Paris 06, EA 3124 LSTA}

\address{LSTA, University Piere et Marie Curie, Case 158\\
4 place Jussieu\\
75005 Paris\\
FRANCE
\printead{e1}\\
\printead{u1}}
\end{aug}

\begin{abstract}
We extend the dimension free Talagrand inequalities for convex distance \cite{talagrand:1995} using an extension of Marton's weak transport \cite{marton:1996a} to other metrics than the Hamming distance. We study the dual form of these weak transport inequalities for the euclidian norm and prove that it implies sub-gaussianity and convex Poincar\'e inequality \cite{bobkov:gotze:1999a}. We obtain new weak transport inequalities for non products measures  extending the results of Samson in \cite{samson:2000}. Many examples are provided to show that the euclidian norm is an appropriate metric for  classical time series. Our approach, based on trajectories coupling, is more efficient to obtain dimension free concentration than existing contractive assumptions  \cite{djellout:guillin:wu:2004,marton:2004}. Expressing the concentration properties of the ordinary least square estimator as a conditional weak transport problem, we derive   new oracle inequalities with fast rates of convergence in dependent settings.
\end{abstract}

\begin{keyword}[class=AMS]
\kwd[Primary ]{60E15}
\kwd[; secondary ]{28A35}
{62J05} {62M10} {62M20}
\end{keyword}

\begin{keyword}
\kwd{transport inequalities, concentration of measures, weakly dependent time series, oracle inequalities, ordinary least square estimator, time series prediction.}
\end{keyword}

\end{frontmatter}

\section{Introduction}\label{sec1}
In his remarkable paper \cite{talagrand:1995}, Talagrand proves that convex distances have dimension free concentration properties. 
Since the seminal work of Marton \cite{marton:1986}, transport inequalities are known to efficiently yield such dimension free concentration inequalities. Using a duality argument, Bobkov and Gotze \cite{bobkov:gotze:1999} even proved that transport inequalities are equivalent to some concentration inequalities. Our references on the subject are the monograph of Villani \cite{villani:2009}, the survey of Gozlan and Leonard \cite{gozlan:leonard:2010} and the textbook of Boucheron {\it et al.} \cite{boucheron:lugosi:massart:2013} for the statistical applications. In dependent settings, transport inequalities appear as a nice alternative  to the modified log-Sobolev approach of Massart \cite{massart:2007} for obtaining dimension free concentration inequalities useful to obtain  fast rates of convergence in mathematical statistics. This article develops new kinds of transport inequalities, new exponential inequalities and new oracle inequalities with fast rates of convergence in dependent settings.\\

In the case of product measures with common margin (iid case), the modified log-Sobolev approach developed in \cite{massart:2007} leads to optimal dimension free concentration inequalities of Bernstein's type. However, for non product measures, such inequalities do not hold in their optimal form in many situations. The reason is the following: in the bounded iid case, Bernstein's inequality yields gaussian behavior for  deviations less than a  bound depending on the essential supremum. In many bounded Markovian cases, their exists a unique regeneration scheme of iid cycles with random length. Then the variance terms in the Bernstein's type inequalities are perturbed by the concentration properties of the random length, see  \cite{bertail:clemencon:2010}. It yields an additional  term, at least logarithmic, which cannot be removed, see \cite{adamczak:2008}. It is a drawback for statistical applications for whom the variance term of the iid case is essential. To bypass this problem, many authors assume  contractive conditions on the kernel of Markov chains, see Marton \cite{marton:1996} under geometric ergodicity and Lezaud \cite{lezaud:1998} under a spectral gap condition.  For symmetric Markov process, the spectral gap condition is more general than uniform ergodicity and is also necessary for Bernstein's inequality, see \cite{guillin:leonard:wu:yao:2009}.\\

Many classical models in time series analysis do not satisfy such contractive conditions. Fortunately, the classical Bernstein's inequality also holds for non Markovian processes under $\tilde \Gamma$-weakly dependent conditions, closely related with the uniform mixing condition, see \cite{samson:2000}. This result yields fast convergence rates of order $n^{-1}$ in oracle inequalities (comparable to those in the iid case) in dependent settings, see \cite{alquier:li:wintenberger}. However, this approach relies on the maximal coupling properties of the Hamming distance and cannot be extended to other metrics, see \cite{dedecker:prieur:raynaud:2006}. For other metrics, contractive conditions are used by Marton \cite{marton:2004} and Djellout {\it et al.} \cite{djellout:guillin:wu:2004} to extend classical dimension free transport inequalities $T_{2,d}(C)$ in a dependent context for metrics $d$ different than the Hamming one. If the "constant" $C$ in the transport inequality  is sufficiently close to the variance term then Bernstein's inequality is recovered and fast convergence rates are achieved, see \cite{joulin:ollivier:2010}. Otherwise, a tradeoff must be done between   the estimates of the variance terms and the accuracy of the contractive coupling schemes, see \cite{wintenberger:2010} for details. The fast rates of convergence in oracle inequalities are not achieved because the variance terms  of the Bernstein's types inequalities do not have the same order than in the iid case. On the contrary, the Hoeffding  inequality   is easily extended to very general dependent case using the bounded difference method, see \cite{mcdiarmid:1989,rio:2000,djellout:guillin:wu:2004}. Unfortunately, the Hoeffding inequality, equivalent to the $T_1$ transport inequality, is not dimension free and yields oracle inequalities with slow rates of convergence of order  $n^{-1/2}$, see \cite{alquier:wintenberger:2012}.\\

In this paper we develop weak transport inequalities to obtain dimension free exponential inequalities and thus fast convergence  rates in oracle inequalities. Let $E$ be a Polish space and $d$ be a lower semi-continuous metric on $E$. With the notation $P[h]=\int hdP$ for any probability measure $P$ and any measurable function $h$, we say that $P$ satisfies the weak transport inequality $\tilde T_{p,d}(C)$ for any $C>0$ and $1\le p\le q$ if for any measure $Q$
$$
\sup_{\alpha}\inf_{\pi}\frac{\pi[\alpha(Y)d(X,Y)]}{(Q[\alpha(Y)^q])^{1/q}}\le \sqrt{2C\mathcal K(Q|P)}
$$
with $1/p+1/q=1$ and the convention $+\infty/+\infty=0/0=0$.
Here $\alpha$ is any non-negative measurable function, $\pi$ is any coupling scheme of $(X,Y)$ with margins $(P,Q)$ and $\mathcal K(Q|P)$ is the relative entropy $Q[\log(dP/dQ)]$ (also called the Kullback-Leibler divergence). As the roles of $P$ and $Q$ are not the same, we also introduce $\tilde T_{p,d}^{(i)}(C)$ where $P$ and $Q$ are interchanged in the left hand side term. An application of Sion's minimax theorem shows that the weak transport inequalities  are extensions of the Marton transport inequalities  \cite{marton:1996a} 
$$
\inf_{\pi}\sup_{\alpha}\frac{\pi[\alpha(Y)1_{X\neq Y}]}{(Q[\alpha(Y)^q])^{1/q}}\le \sqrt{2C\mathcal K(Q|P)}.
$$ 
These inequalities are  weakened forms of the classical $T_{p,d}(C)$ transport inequality 
$$
W_{p,d}(P,Q):=\inf_{\pi }\pi[d^p(X,Y)]^{1/p}\le \sqrt{2C\mathcal K(Q|P)}.
$$
Contrary to the classical $T_{p,d}(C)$ transport inequalities, any compactly supported measure $P$ satisfies the weak $\tilde T_{p,d}(C)$ transport inequalities for any $1\le p\le 2$. Moreover, the weak transport  inequalities extend nicely to non-products non-contractive measures $P$ on $E^n$, $n\ge 1$. Using a new Markov trajectories coupling scheme, our main result in Theorem \ref{th:wti} states that there exists $C'>0$ such that
\begin{equation}\label{eq:mr}
\sup_{\alpha}\inf_{\pi}\frac{\sum_{j=1}^n\pi[\alpha_j(Y)d(X_j,Y_j)]}{(\sum_{j=1}^nQ[\alpha_j(Y)^q])^{1/q}}\le \sqrt{n^{2/p-1}C'\mathcal K(Q|P)}.
\end{equation}
The main assumptions hold on the conditional laws $P_{|x^{(i)}}$ of the trajectory $(X_{i+1},\ldots,X_n)$ given that $(X_i,\ldots,X_0)=x^{(i)}=(x_i,\ldots,x_0)$. Fix a lower semi-continuous auxiliary metric $d'$ satisfying $d'\ge Md$. We assume the existence of a trajectory coupling scheme $\pi_{|i}$ of the $E^{n-i}$-supported measures $(P_{|x^{(i)}},P_{|y_i,x^{(i-1)}})$ such that 
$$
\pi_{|i}(d^p(X_k,Y_k))^{1/p}\le \gamma_{k,i}(p)\,d'(x_i,y_i),\qquad \forall\, i< k\le n.
$$
The existence of such coefficients $\gamma_{k,i}(p)$ for any $1\le i<k\le  n$ is called the $\Gamma(p)$-weakly dependent condition.
When $d=d'$ is the Hamming distance, the $\Gamma(2)$-weak dependance coincides with the weak dependence already studied by Samson \cite{samson:2000} and we recover his results. We keep the notation of \cite{samson:2000} and denote $\tilde\Gamma(p)$ the weak dependence  when $d$ is the Hamming distance. However, to deal with more general and classical time series, we prefer to choose $d$ as the euclidian norm, see Section \ref{sec:ex}. When $p=1$ and $\tilde T_{1,d}(C)=\tilde T_{1,d}^{(i)}(C)=T_{1,d}(C)$ by definition, the $\Gamma(1)$-weak dependence coincides with the one of \cite{rio:2000} when $d'$ is the Hamming distance and the one of \cite{djellout:guillin:wu:2004} when $d'=d$. Thus we recover the Hoeffding's inequalities of \cite{rio:2000,djellout:guillin:wu:2004}. They are not dimension free  because $n^{2/p-1}=n$ as $p=1$ and we prefer to focus our study on the case $p=2$. We then prove dimension free concentration for ARMA processes under the minimal dependence assumption that the stationary distribution exists. Our approach improves the existing ones based on contractive arguments \cite{djellout:guillin:wu:2004,marton:2004} for classical time series; for instance, considering the Markov chain $(X_t,\xi_t)$ formed by an ARMA(1,1) process $X_{t}=\phi X_{t-1}+\xi_t+\theta \xi_{t-1}$, the contraction condition is $\phi^2 +\theta^2<1$ whereas our condition is $|\phi|<1$. Thus, we answer positively to a crucial question raised in Remark 3.6 in \cite{djellout:guillin:wu:2004}.\\

Weakening transport inequalities does not deteriorate the concentrations properties used in mathematical statistics. More precisely, we show that $\tilde T_{2,d}(C)$ and $\tilde T^{i}_{2,d}(C)$ yields the convex distance dimension free estimate due to Talagrand:
$$
P\Big[\exp\Big(\frac{d_c^2(X,A)}{4C}\Big)\Big]\le \frac1{P(A)}, \mbox{ for any measurable set }A.
$$
Here $d_c(x,A)$ is the convex distance of Talagrand \cite{talagrand:1995} when $d$ is the Hamming distance and the euclidian distance to the convex hull of $A$ as in Maurey \cite{maurey:1991} when $d$ is the euclidian norm. Following Bobkov and Ledoux \cite{bobkov:ledoux:1997}, we obtain this result by analyzing the dual form of the weak transport inequality. If $P$ satisfies $\tilde T_{2,d}(C)$ on $E $ then for any function $f$   such that there exist a function $\alpha(x)$ satisfying $f(x)-f(y)\le\alpha(x)d(x_j,y_j)$ for any $x,y\in E^2$ we have
\begin{equation}\label{eq:ni}
P\Big[\exp\Big(\lambda (f-P[f])-\frac {C\lambda^2}2 \alpha^2  \Big)\Big)\Big]\le 1,\qquad \lambda>0.
\end{equation}
When $d$ is the Hamming distance,  inequality \eqref{eq:ni} yields to the Bernstein inequality, see Ledoux \cite{ledoux:1996} in the independent setting and Samson \cite{samson:2000} in the $\tilde\Gamma(2)$-weakly dependent setting. When the function $f$ is a convex function, the condition above is automatically satisfied with $L_j=\partial_j f$ (the sub-gradients) and $d$ the euclidian norm. The inequality \eqref{eq:ni} coincides with generalizations of the Tsirel'son inequality discovered in \cite{tsirelson:1985} for gaussian measures, see \cite{bobkov:gentil:ledoux:2001}. Using the dual form \eqref{eq:ni}, we also prove that $\tilde T_2$ implies  sub-gaussianity and convex Poincar\'e inequality \cite{bobkov:gotze:1999a}. Then, the weak transport approach provides dimension free concentration properties of ARMA processes under minimal assumptions and  is sufficient for statistical application.\\

As the transport inequalities yield concentration of measures via relative entropy, we couple it with the statistical  PAC-bayesian paradigm that describes the accuracy of estimators in term of relative entropy too, see \cite{mcallester:1999}. Thus, instead of using the approach based on the supremum of the empirical process \cite{massart:2007}, we introduce the  conditional weak transport inequalities that provides sharp oracle inequalities. We apply this new approach to the Ordinary Least Square (OLS) estimator $\hat\theta$ in the linear regression context (other interesting statistical issues will be investigated in the future). Denoting by $R$ the risk of prediction, an oracle inequality states with high probability that $R(\hat\theta)\le(1+\eta)R(\overline\theta) + \Delta_n\eta^{-1_{\eta\neq 0}}$ where $\eta\ge 0$, $\overline \theta$ is the oracle defined as $R(\overline\theta) \le R(\theta)$ for all $\theta$ and $\Delta_n$ is the rate of convergence.  If $\eta=0$ then the oracle inequality is said to be exact and otherwise it is non exact, see   \cite{lecue:mendelson:2012}. The dimension free concentration properties obtained from the weak transport inequalities with $p=2$ yield to fast rates of convergence $\Delta_n\propto n^{-1}$. When $d$ is the Hamming distance, we recover in the conditional weak transport inequalities the variance terms of the iid case. These variance terms play  a crucial role through Bernstein's condition \cite{bartlett:mendelson:2006} to obtain exact oracle inequalities with fast rates of convergence. Thus, when $d$ is the Hamming distance, we obtain new exact oracle inequalities with fast convergence rates for the OLS $\hat \theta$ in the $\tilde\Gamma(2)$-weakly dependent case. However, in more general cases, Bernstein's condition cannot hold and the variance terms cannot be nicely estimated. There, we emphase the fact that the Tsirelson inequality should be preferred to the Bernstein one. Hence, using the euclidian metric, we obtain new nonexact oracle inequalities for the OLS $\hat \theta$ for $\Gamma(2)$-weakly dependent time series. The efficiency of the OLS is proved for many models such as classical ARMA models. \\

The paper is organized as follows: in Section \ref{sec:wtc} are developed the preliminaries that are used in the proof of our main result, a weak transport inequalities for non product measures stated in Section \ref{sec:wti}. We also study the dual form of the weak transport inequalities, the Tsirel'son inequality that are satisfied and the connection with Talagrand's inequalities in this Section \ref{sec:wti}. Section \ref{sec:ex} is devoted to some examples of $\tilde \Gamma$ and $\Gamma$-weakly dependent processes. Finally, new oracle inequalities with fast rates of convergence are given in Section \ref{sec:oi}.

\section{Weak transport costs, gluing lemma and Markov couplings}\label{sec:wtc}
\subsection{Weak transport costs on $E$}
Let $M(F)$ denotes the set of probability measures on some space $F$,  $M^+(F)$ the set of lower semi-continuous non negative measurable functions and $\tilde M(P,Q)$ the set of coupling measures $\pi_{x,y}$, i.e. $\pi_{x,y}\in M(E^2)$ with margins $\pi_x=P$ and $\pi_y=Q$. Let $(p,q)$ be real numbers satisfying $1\le p\le 2$ and $1/p+1/q=1$. 
Let us define the weak transport cost as
\begin{equation}
\tilde W_{p,d}(P,Q)=\sup_{\alpha \in M^+(E)}\inf_{\pi\in \tilde M(P,Q)}\frac{\pi[\alpha(Y)d(X,Y)]}{Q[\alpha^q]^{1/q}}
\end{equation}
with the classical conventions $Q[\alpha^q]^{1/q}=\mbox{ess}\sup{\alpha(Y)}$ when $q=\infty$ and  $+\infty/+\infty=0/0=0$. For fixed $\alpha\in M^+(F)$, let us denote
\begin{equation}
\tilde W_{\alpha,d} (P,Q)=\inf_{\pi \in \tilde M(P,Q)} \pi[\alpha(Y)d(X,Y)].
\end{equation}
Note that $\tilde W$ is not symmetric and that $\tilde W_{p,d}(P,Q)=\tilde W_{p,d}(Q,P)= \tilde W_{\alpha,d} (P,Q)= \tilde W_{\alpha,d} (Q,P)=0$ if $P=Q$. Note that $\alpha\in M^+$ and $d$ are assumed to be lower semi-continuous such as the optimal transport in the weak transport cost definition exists, see for example \cite{gozlan:leonard:2010}. Now let us show that the weak transport cost satisfies the triangular inequality.  It is a simple consequence of the second assertion of the following version of the gluing Lemma:
\begin{lemma}\label{lem:glu}
For any coupling $\pi_{x,y}\in\tilde M(P,Q)$ and $\pi_{y,z}\in\tilde M(Q,R)$ respectively there exists a distribution $\pi_{x,y,z}$ with  corresponding margins   and such that $X$ and $Z$ are independent conditional on $Y$, i.e. $\pi_{x,z|y}=\pi_{x|y}\pi_{z|y}$.
\end{lemma}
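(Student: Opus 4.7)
The plan is to invoke the disintegration theorem (which applies because $E$ is Polish, so $Q \in M(E)$ admits regular conditional probabilities) and then build $\pi_{x,y,z}$ explicitly from the two given couplings.

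First, I would disintegrate each coupling with respect to its $Y$-marginal: write $\pi_{x,y}(dx,dy) = \pi_{x|y}(dx)\,Q(dy)$ and $\pi_{y,z}(dy,dz) = \pi_{z|y}(dz)\,Q(dy)$, where $y \mapsto \pi_{x|y}$ and $y \mapsto \pi_{z|y}$ are Markov kernels from $E$ to $E$, well-defined $Q$-almost surely by the regular conditional probability theorem on Polish spaces.

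Next, I would define the candidate measure on $E^3$ by
\begin{equation*}
\pi_{x,y,z}(A\times B\times C) \;=\; \int_B \pi_{x|y}(A)\,\pi_{z|y}(C)\,Q(dy),
\end{equation*}
extended to the product $\sigma$-algebra by the usual Carathéodory/monotone class argument (or equivalently by Fubini through the product of the two kernels against $Q$). The $(X,Y)$-marginal is recovered by taking $C=E$, which collapses the inner $\pi_{z|y}$-integral to $1$ and reproduces the disintegration of $\pi_{x,y}$; symmetrically the $(Y,Z)$-marginal equals $\pi_{y,z}$. Conditional independence of $X$ and $Z$ given $Y$, namely $\pi_{x,z|y} = \pi_{x|y}\pi_{z|y}$, is then immediate from the product structure inside the integrand.

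I do not expect any serious obstacle: the Polish assumption on $E$ removes any issue about existence and joint measurability of the conditional kernels, and the rest is an explicit construction followed by a routine check that the marginals agree. The only point worth stating carefully is that $\pi_{x|y}$ and $\pi_{z|y}$ are only defined $Q$-a.e., but this does not affect the construction since the outer integral is against $Q$.
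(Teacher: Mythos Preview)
Your proposal is correct and is essentially the same argument as the paper's: both disintegrate the two couplings with respect to the common $Y$-marginal and define $\pi_{x,y,z}=\pi_{x|y}\pi_{z|y}\pi_y$, from which the marginals and the conditional independence are immediate. The paper simply invokes the classical gluing lemma (Villani) rather than spelling out the regular conditional probability construction, but the content is identical.
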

\begin{proof}
From the classical gluing Lemma, se for example the Villani's textbook \cite{villani:2009}, we can choose $\pi_{x,y,z}$ such that
$
\pi_{x,y,z} =\pi_{x|y} \pi_{z|y} \pi_y
$ as the margins corresponds: $\pi_{x|y}   \pi_y=\pi_{x,y}$ and $ \pi_{z|y} \pi_y=\pi_{y,z}$
The conditional independence follows from the specific form of $\pi_{x,y,z}$ as $\pi_{x,z|y}=\pi_{x,y,z}/\pi_y$ by definition.
\end{proof}
The conditional independence in the gluing Lemma \ref{lem:glu} is the main ingredient to prove the triangular inequality on $\tilde W_{p,d}$:
\begin{lemma}\label{lem:ti1}
For any $P,Q,R$ we have
\begin{equation}\label{eq:ti1}
\tilde W_{p,d}(P,R)\le \tilde W_{p,d}(P,Q)+\tilde W_{p,d}(Q,R)
\end{equation}
\end{lemma}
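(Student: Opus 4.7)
The plan is to mimic the classical proof of the triangle inequality for Wasserstein costs by gluing two near--optimal couplings through the intermediate measure $Q$, but with a twist that accommodates the asymmetric role of the test function $\alpha$, which is anchored to the \emph{second} marginal. Fix an arbitrary $\alpha_0\in M^+(E)$ and $\epsilon>0$. The goal is to exhibit a coupling $\pi\in\tilde M(P,R)$ satisfying
\[
\pi[\alpha_0(Z)\,d(X,Z)]\ \le\ \bigl(\tilde W_{p,d}(P,Q)+\tilde W_{p,d}(Q,R)+2\epsilon\bigr)\,R[\alpha_0^q]^{1/q};
\]
dividing by $R[\alpha_0^q]^{1/q}$, letting $\epsilon\to 0$ and taking the supremum over $\alpha_0$ then yields \eqref{eq:ti1}.

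The construction proceeds from right to left. First pick $\pi_{y,z}\in\tilde M(Q,R)$ that is $\epsilon$--optimal in the definition of $\tilde W_{p,d}(Q,R)$ for the test function $\alpha_0$. Then define the ``pulled--back'' function $\tilde\alpha(y):=\pi_{z\mid y}[\alpha_0(Z)]$ on $E$, and pick $\pi_{x,y}\in\tilde M(P,Q)$ that is $\epsilon$--optimal in the definition of $\tilde W_{p,d}(P,Q)$ for the test function $\tilde\alpha$. Apply the gluing Lemma \ref{lem:glu} to produce $\pi_{x,y,z}$ with the correct bivariate margins and satisfying the conditional independence $\pi_{x,z\mid y}=\pi_{x\mid y}\pi_{z\mid y}$.

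The key computation uses $d(X,Z)\le d(X,Y)+d(Y,Z)$ under $\pi_{x,y,z}$. The $(Y,Z)$--term immediately collapses to $\pi_{y,z}[\alpha_0(Z)\,d(Y,Z)]$, which by the $\epsilon$--optimal choice of $\pi_{y,z}$ is bounded by $(\tilde W_{p,d}(Q,R)+\epsilon)\,R[\alpha_0^q]^{1/q}$. For the $(X,Y)$--term, conditional independence allows one to integrate $Z$ out first, replacing $\alpha_0(Z)$ by $\tilde\alpha(Y)$, so this term equals $\pi_{x,y}[\tilde\alpha(Y)\,d(X,Y)]$ and is bounded by $(\tilde W_{p,d}(P,Q)+\epsilon)\,Q[\tilde\alpha^q]^{1/q}$. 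To unify the two denominators I would use Jensen's inequality for $u\mapsto u^q$ (valid since $q\ge 1$): $\tilde\alpha(y)^q\le\pi_{z\mid y}[\alpha_0(Z)^q]$, which upon integration with respect to $Q$ gives $Q[\tilde\alpha^q]\le R[\alpha_0^q]$. Pushing $\pi_{x,y,z}$ forward to the $(X,Z)$--marginal produces the desired $\pi\in\tilde M(P,R)$ and closes the argument.

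The main obstacle I anticipate is a regularity technicality rather than a conceptual one: $\tilde\alpha$ is defined as a conditional expectation and is a priori only Borel measurable, whereas the supremum in $\tilde W_{p,d}(P,Q)$ ranges over the lower semi--continuous class $M^+(E)$. I would resolve this either by noting that the inequality $\inf_\pi\pi[\alpha(Y)d(X,Y)]\le \tilde W_{p,d}(P,Q)\,Q[\alpha^q]^{1/q}$ extends to all non--negative measurable $\alpha$ by a monotone/l.s.c.\ approximation (using Fatou on the infimum side and monotone convergence on $Q[\alpha^q]^{1/q}$), or alternatively by approximating $\tilde\alpha$ from above by a minimizing sequence of l.s.c.\ functions; the l.s.c.\ restriction in the definition is there solely to ensure that the inner infimum is attained (cf.\ \cite{gozlan:leonard:2010}) and plays no essential role in the triangle inequality.
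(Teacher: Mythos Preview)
Your proposal is correct and follows essentially the same route as the paper: glue a near--optimal coupling of $(Q,R)$ for $\alpha$ with a near--optimal coupling of $(P,Q)$ for the pulled--back weight $\tilde\alpha(y)=\pi_{z\mid y}[\alpha(Z)]$, use the conditional independence from Lemma~\ref{lem:glu} to split the cost, and close with Jensen's inequality $Q[\tilde\alpha^q]\le R[\alpha^q]$. The only differences are cosmetic --- the paper works with exact optimizers $\pi^\ast$ (relying on lower semi--continuity for attainment) rather than $\epsilon$--optimal couplings --- and you are right to flag the l.s.c.\ issue for $\tilde\alpha$, which the paper passes over silently.
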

\begin{proof}
Let us fix $\alpha\in M^+(E)$ such that $R[\alpha^q]<\infty$.  We have
$$
\pi_{x,z}[\alpha(Z)d(X,Z)]\le \pi [\alpha(Z)d(X,Y)]+\pi_{y,z}[\alpha(Z)d(Y,Z)].
$$
Let us choose $\pi_{y,z}^\ast$ satisfying 
$$
\pi_{y,z}^\ast[\alpha(Z)d(Y,Z)]=\inf_{\pi\in \tilde M(Q,R)}  \pi[\alpha(Z)d(Y,Z)]\le  R[\alpha^q]^{1/q}\tilde W_{p,d}(Q,R).
$$
By conditional independence in Lemma \ref{lem:glu}, we also have
$$
\pi [\alpha(Z)d(X,Y)]=\pi_{x,y}[\pi_{z|y}^\ast[\alpha(Z)|Y]d(X,Y)]=:\pi_{x,y}[\tilde \alpha(Y)d(X,Y)].
$$
Let us choose $\pi_{x,y}^\ast$ satisfying
$$
\pi_{x,y}^\ast[\tilde\alpha(Y)d(X,Y)]=  \inf_{\pi\in \tilde M(P,Q)}   \pi[\tilde\alpha(Y)d(X,Y)]\le  Q[\tilde \alpha^q]^{1/q}\tilde W_{p,d}(P,Q).$$
Note that $Q[\tilde \alpha^q]=Q[\pi_{z|y}^\ast[\alpha(Z)|Y]^q]\le R[\alpha^q]$ using Jensen's inequality. Let us denote $\pi^\ast=\pi^\ast_{x,y,z}$ obtained by the gluing Lemma \ref{lem:condglu} of $\pi_{x,y}^\ast$ and $\pi_{y,z}^\ast$. Collecting all these bounds we have $\pi^\ast [\alpha(Z)d(X,Y)]\le  R[\alpha^q]\tilde W_{p,d}(P,Q)$. We obtain
$$
\frac{\pi_{x,z}^\ast[\alpha(Z)d(X,Z)]}{R[\alpha^q]^{1/q}}\le (\tilde W_{p,d}(P,Q)+\tilde W_{p,d}(Q,R)).
$$
and taking the supremum on $\alpha$ the desired result follows from the definition of $\tilde W_{p,d}(Q,R)$.
\end{proof}

\subsection{Markov couplings}
In this section, we only consider Markov couplings on the product space  $E^n$ with $n=2$, the cases $n\ge 2$ following by simple induction reasoning. \begin{dfn}
Let $P$, $Q$ $\in M(E^2)$, the set of Markov couplings $\tilde M(P,Q)$ are defined as the products $\pi=\pi_1\pi_{2|1}$ with $\pi_1$ a coupling of $P_1$ and $Q_1$ and $\pi_{2|1}$ a coupling of $P_{2|1}$ and $Q_{2|1}$.
\end{dfn}
The terminology of Markov couplings was introduced by R\"uschendorf in \cite{ruschendorf:1985}. Similar couplings are used by Marton in \cite{marton:1996a}. The property of conditional independence in the gluing Lemma \ref{lem:glu} is nicely compatible with Markov couplings:
\begin{lemma}\label{lem:condglu}
For any Markov couplings $\pi_{x,y}\in \tilde M(P,Q)$ and $\pi_{y,z}\in \tilde M(P,Q)$ with $P,\,Q,\,R\in \tilde M(E^2)$ it exists a distribution $\pi_{x,y,z}$ with corresponding margins  and such that $X=(X_1,X_2)$ and $Z=(Z_1,Z_2)$ are independent conditional on $Y=(Y_1,Y_2)$.
\end{lemma}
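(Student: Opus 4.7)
The statement is the natural analogue of Lemma \ref{lem:glu} for measures on the product space $E^2$, and my plan is to reduce it to that lemma while making the construction compatible with the Markov factorization of the inputs. The quickest route is simply to apply Lemma \ref{lem:glu} with the Polish space $E$ replaced by $E^2$: the measures $P,Q,R$ live on $E^2$, the couplings $\pi_{x,y}$ and $\pi_{y,z}$ are probability measures on $E^2\times E^2$ with the prescribed bivariate marginals, and the Markov factorization plays no role in the existence assertion. This yields $\pi_{x,y,z}=\pi_{x|y}\pi_{z|y}\pi_y$ on $(E^2)^3$ with correct $(X,Y)$- and $(Y,Z)$-marginals and with $\pi_{x,z|y}=\pi_{x|y}\pi_{z|y}$, which is exactly what is claimed.

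For use in subsequent arguments it is, however, more informative to build the gluing coordinatewise. First I would glue the first-coordinate marginals: apply Lemma \ref{lem:glu} to $\pi_{x_1,y_1}\in\tilde M(P_1,Q_1)$ and $\pi_{y_1,z_1}\in\tilde M(Q_1,R_1)$ to obtain $\pi_{x_1,y_1,z_1}$ with $X_1\perp Z_1\mid Y_1$. Then, for every fixed $(x_1,y_1,z_1)$, the Markov factorization of the inputs provides conditional couplings $\pi_{x_2,y_2\mid x_1,y_1}$ and $\pi_{y_2,z_2\mid y_1,z_1}$ sharing the common $y_2$-marginal $Q_{2\mid 1}(\cdot\mid y_1)$; applying Lemma \ref{lem:glu} to them conditionally produces $\pi_{x_2,y_2,z_2\mid x_1,y_1,z_1}$ with $X_2\perp Z_2\mid (Y_1,Y_2,X_1,Z_1)$. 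Combining via $\pi_{x,y,z}=\pi_{x_1,y_1,z_1}\,\pi_{x_2,y_2,z_2\mid x_1,y_1,z_1}$ yields the sought distribution: the bivariate marginals are inherited from the Markov factorizations of the inputs, and chaining the two layers of conditional independence gives $\pi_{x,z\mid y}=\pi_{x\mid y}\pi_{z\mid y}$.

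The only genuine issue to verify is measurable selection of the conditional gluing kernel $(x_1,y_1,z_1)\mapsto\pi_{x_2,y_2,z_2\mid x_1,y_1,z_1}$, but this is standard in the Polish setting and is exactly the point on which the proof of Lemma \ref{lem:glu} itself rests. I therefore do not expect any substantive obstacle; the value of the lemma lies less in its proof than in the fact that the coordinatewise construction iterates cleanly, and thus supports the induction on $n$ used later to extend the Markov gluing to $E^n$.
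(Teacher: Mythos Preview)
Your proposal is correct. Your second route---glue the first coordinates via Lemma~\ref{lem:glu}, then glue the second coordinates conditionally---is exactly the paper's construction: the paper writes down the explicit kernel $\pi_{x_2,y_2,z_2|x_1,y_1,z_1}=\pi_{x_2|x_1,y_1,y_2}\pi_{z_2|y_1,z_1,y_2}\pi_{y_2|y_1}$ and then checks, using the Markov-coupling identity $\pi_{y_2|x_1,y_1}=\pi_{y_2|y_1}=\pi_{y_2|y_1,z_1}$, that the marginals and the conditional independences $X_1\perp Z_1\mid(Y_1,Y_2)$ and $X_2\perp Z_2\mid(X_1,Z_1,Y_1,Y_2)$ hold. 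Your sketch compresses these verifications into one sentence (``chaining the two layers''), but the content is the same.

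Your first route---apply Lemma~\ref{lem:glu} directly with $E$ replaced by $E^2$---is a genuinely shorter proof of the lemma \emph{as stated}: the Markov hypothesis on the inputs is indeed irrelevant to the existence of a glued law with $X\perp Z\mid Y$. The paper does not take this shortcut. The difference in payoff is precisely what you identify at the end: the direct application proves the statement but says nothing about the coordinatewise structure of $\pi_{x,y,z}$, whereas the paper's explicit layered construction (your second route) is what makes the induction to $E^n$ transparent and is the form actually used in the proofs of Lemma~\ref{lem:wal} and Theorem~\ref{th:wti}. So both arguments are valid; the coordinatewise one is the one that earns its keep downstream.
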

\begin{proof}
By assumption $\pi_{x,y}=\pi_{x_1,y_1}\pi_{x_2,y_2|x_1,y_1}$ and $\pi_{y,z}=\pi_{y_1,z_1}\pi_{y_2,z_2|y_1,z_1}$. Let us define $\pi_{x,y,z}$ as $\pi_{x_1,y_1,z_1}\pi_{x_2,y_2,z_2|x_1,y_1,z_1}$ by the relation
\begin{equation}\label{eq:glu}
\pi_{x_1,y_1,z_1} =\pi_{x_1|y_1} \pi_{ z_1|y_1} \pi_{y_1} ,
\end{equation}
and
\begin{equation}\label{eq:condglu}
\pi_{x_2,y_2,z_2|x_1,y_1,z_1} =\pi_{x_2|x_1,y_1,y_2} \pi_{ z_2|y_1,z_1,y_2} \pi_{y_2|y_1} .
\end{equation}
Let us check that $\pi_{x,y,z}$ has the correct margins. First, from the classical gluing lemma we know that $\pi_{x_1,y_1,z_1}$ has the correct margins. It remains to prove that $\pi_{x_2,y_2,z_2|x_1,y_1,z_1}$ has the correct margins. Notice that from the definition of  Markov couplings, we have $\pi_{y_2|y_1}=\pi_{y_2|x_1,y_1}=\pi_{y_2|y_1,z_1}$. Thus the first margin of $\pi_{x_2,y_2,z_2|x_1,y_1,z_1}$ is equal to
$$
\pi_{x_2|x_1,y_1,y_2}\pi_{y_2|y_1}=
\pi_{x_2|x_1,y_1,y_2}\pi_{y_2|x_1,y_1}=\pi_{x_2,y_2|x_1,y_1}.
$$
The same reasoning show that the second margin is also the correct one.\\

We proved above that by construction $X_1$ and $Z_1$ are independent conditional on $Y_1$, i.e. that $\pi_{x_1,z_1|y_1}=\pi_{x_1|y_1}\pi_{z_1|y_1}$. Let us show that it is also the case conditional on $Y_1$ and $Y_2$. We have
$$
\pi_{x_1,z_1|y_1,y_2}=\frac{\pi_{x_1,z_1,y_1,y_2}}{\pi_{y_1,y_2}}=\frac{\pi_{y_2|y_1}\pi_{x_1,z_1,y_1}}{\pi_{y_2|y_1}\pi_{y_1}}=\pi_{x_1,z_1|y_1}
$$
the third identity following from the identity $\pi_{y_2|y_1}=\pi_{y_2|x_1,y_1,z_1}$ by the identity \eqref{eq:condglu}. Thus, using that $X_1$ and $Z_1$ are independent conditional on   $Y_1$ we obtain the identity $\pi_{x_1,z_1|y_1,y_2}=\pi_{x_1|y_1}\pi_{z_1|y_1}$. We conclude that $\pi_{x_1,z_1|y_1,y_2}= \pi_{x_1 |y_1,y_2}\pi_{ z_1|y_1,y_2}$ as
$$
\pi_{x_1|y_1}=\frac{\pi_{y_2|y_1}\pi_{x_1 ,y_1}}{\pi_{y_2|y_1}\pi_{y_1}}=\frac{\pi_{x_1,y_1,y_2}}{\pi_{y_1,y_2}}=\pi_{x_1|y_1,y_2}
$$
the third identity following from the identity $\pi_{y_2|y_1}=\pi_{y_2|x_1,y_1}$ by definition of Markov couplings (the same is true replacing $x_1$ by $z_1$).\\

It remains to prove that $X_2$ is independent of $Z_2$ conditional on $(X_1,Z_1)$ and $(Y_1,Y_2)$. Indeed, we have by construction 
$$
\pi_{x_2,z_2|x_1,y_1,z_1,y_2}=\frac{\pi_{x_2,y_2,z_2|x_1,y_1,z_1}}{\pi_{y_2|x_1,y_1,z_1}}
=\frac{\pi_{x_2,y_2,z_2|x_1,y_1,z_1}}{\pi_{y_2|y_1}}=
\pi_{x_2|x_1,y_1,y_2}
\pi_{z_2|z_1,y_1,y_2},
$$
the last identity following from the identity  \eqref{eq:condglu}.
Thus the result is proved.\end{proof}

\subsection{Weak transport costs on $E^n$, $n\ge 2$}

We extend the  definition of $\tilde W$ on the product space $E^n$ for $n\ge 2$. Let $P$, $Q$ $\in M(E^n)$ we define
\begin{equation}
\tilde W_{p,d}(P,Q)=\sup_{\alpha\in M^+(E^n)}\inf_{\pi\in \tilde M(P,Q)}\frac{\sum_{j=1}^n\pi[\alpha_j(Y)d(X_j,Y_j)]}{(\sum_{j=1}^nQ[\alpha_j(Y)^q])^{1/q}}
\end{equation}
with the convention $(\sum_{j=1}^nQ[\alpha_j(Y)^q])^{1/q}=\max_{1\le j\le n}\mbox{ess}\sup \alpha_j$ if $q=\infty$ and
\begin{equation}\label{eq:walpha}
\tilde W_{\alpha,d}(P,Q)=\inf_{\pi\in \tilde M(P,Q)}{\sum_{j=1}^n\pi[\alpha_j(Y)d(X_j,Y_j)]}
\end{equation}
for any fixed $\alpha=(\alpha_j)_{1\le j\le n}\in M^+(E^n)$. Considering Markov couplings, we  use the conditional independence in the gluing Lemma \ref{lem:condglu} to assert that the weak transport cost on $E^n$ also satisfies the triangular inequality. More useful, $\tilde W_{\alpha,d}$ satisfies an inequality similar than the triangular one:
\begin{lemma}\label{lem:wal}
For any $P,Q,R \in M(E^n)$, for any $\alpha \in M^+(E^n)$ there exists $\tilde \alpha\in M^+(E^n)$ satisfying  $Q[\tilde \alpha_j(Y)]^q\le R[\alpha_j^q(Z)]$ for all $1\le j \le n$ and
\begin{equation}\label{eq:wal}
\tilde W_{\alpha,d}(P,R)\le \tilde W_{\tilde \alpha,d}(P,Q)+\tilde W_{\alpha,d}(Q,R)
\end{equation}
\end{lemma}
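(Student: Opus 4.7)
The plan is to mimic the three-step argument for Lemma \ref{lem:ti1}, substituting the Markov gluing of Lemma \ref{lem:condglu} for the classical one. Fix $\alpha=(\alpha_j)_{1\le j\le n}\in M^+(E^n)$ and pick an optimal Markov coupling $\pi^{\ast}_{y,z}\in\tilde M(Q,R)$ for $\tilde W_{\alpha,d}(Q,R)$. Define the candidate
$$\tilde\alpha_j(y):=\pi^{\ast}_{z|y}[\alpha_j(Z)\mid Y=y],\qquad 1\le j\le n.$$
Applying Jensen's inequality to $t\mapsto t^q$ and integrating against $Q$, one obtains $Q[\tilde\alpha_j(Y)^q]\le Q[\pi^{\ast}_{z|y}[\alpha_j(Z)^q\mid Y]]=R[\alpha_j(Z)^q]$, which gives the first conclusion of the lemma.

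Next, pick an optimal Markov coupling $\pi^{\ast}_{x,y}\in\tilde M(P,Q)$ for $\tilde W_{\tilde\alpha,d}(P,Q)$ and glue $\pi^{\ast}_{x,y}$ with $\pi^{\ast}_{y,z}$ via Lemma \ref{lem:condglu}, producing $\pi^{\ast}=\pi^{\ast}_{x,y,z}$ whose pairwise marginals on $(X,Y)$ and $(Y,Z)$ are the chosen ones and under which $X$ and $Z$ are conditionally independent given $Y$. A bookkeeping check using \eqref{eq:glu}--\eqref{eq:condglu} and the Markov-coupling identities $\pi_{y_k\mid y_{<k}}=\pi_{y_k\mid x_{<k},y_{<k}}=\pi_{y_k\mid y_{<k},z_{<k}}$ (which follow from the very definition of Markov coupling) shows that the $(X,Z)$-marginal $\pi^{\ast}_{x,z}$ is itself a Markov coupling in $\tilde M(P,R)$; concretely, for each $k$, marginalising over $y_1,\dots,y_k$ leaves the $k$th conditional of $\pi^{\ast}_{x,z}$ equal to a coupling of $P_{k\mid 1,\dots,k-1}$ and $R_{k\mid 1,\dots,k-1}$.

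To conclude, apply the triangle inequality $d(X_j,Z_j)\le d(X_j,Y_j)+d(Y_j,Z_j)$, multiply by $\alpha_j(Z)$, sum over $j$, and take the $\pi^{\ast}$-expectation. The $d(Y_j,Z_j)$ piece depends only on the $(Y,Z)$-marginal and equals $\tilde W_{\alpha,d}(Q,R)$ by the choice of $\pi^{\ast}_{y,z}$. For the $d(X_j,Y_j)$ piece, conditioning on $(X,Y)$ and using the conditional independence of $X$ and $Z$ given $Y$ replaces $\alpha_j(Z)$ by $\tilde\alpha_j(Y)$, so this piece equals $\sum_j\pi^{\ast}_{x,y}[\tilde\alpha_j(Y)d(X_j,Y_j)]=\tilde W_{\tilde\alpha,d}(P,Q)$. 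Since $\pi^{\ast}_{x,z}\in\tilde M(P,R)$, the inequality $\sum_j\pi^{\ast}_{x,z}[\alpha_j(Z)d(X_j,Z_j)]\le \tilde W_{\tilde\alpha,d}(P,Q)+\tilde W_{\alpha,d}(Q,R)$ passes to the infimum defining $\tilde W_{\alpha,d}(P,R)$, yielding \eqref{eq:wal}.

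The main obstacle I expect is the verification in the second paragraph: for $n\ge 2$, Markov couplings form a proper subclass of couplings, so Lemma \ref{lem:condglu} alone — which only guarantees conditional independence of $X$ and $Z$ given $Y$ — is not enough; one must check that the marginal $\pi^{\ast}_{x,z}$ is genuinely Markov, which is where the explicit form \eqref{eq:condglu} and the Markovianity of $\pi^{\ast}_{x,y}$ and $\pi^{\ast}_{y,z}$ are essential. A minor secondary issue is that $\tilde\alpha_j$ is a priori only measurable, not lower semi-continuous; this can be bypassed by a standard regularisation, by taking infima over $\alpha$ in a continuous dense subclass, or by interpreting $M^+$ as the measurable non-negative functions throughout.
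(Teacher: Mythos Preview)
Your argument is essentially the paper's own: choose $\pi^\ast_{y,z}$ optimal for $\tilde W_{\alpha,d}(Q,R)$, set $\tilde\alpha_j(y)=\pi^\ast_{z|y}[\alpha_j(Z)\mid Y=y]$, choose $\pi^\ast_{x,y}$ optimal for $\tilde W_{\tilde\alpha,d}(P,Q)$, glue via Lemma~\ref{lem:condglu}, apply the triangle inequality for $d$, and use conditional independence to pass $\alpha_j(Z)$ to $\tilde\alpha_j(Y)$. The Jensen step for $Q[\tilde\alpha_j^q]\le R[\alpha_j^q]$ is identical.

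Your flagged ``main obstacle'' is a point the paper simply does not raise: the paper writes $\pi^\ast_{x,z}\in\tilde M(P,R)$ without comment. Your bookkeeping check is in fact correct for $n=2$ (and inductively for larger $n$): from the explicit form \eqref{eq:condglu} one computes $\pi_{x_2\mid x_1,y_1,z_1}=\int\pi_{x_2\mid x_1,y_1,y_2}\pi_{y_2\mid y_1}\,dy_2=\pi_{x_2\mid x_1,y_1}=P_{x_2\mid x_1}$, the last two equalities using the Markovianity of $\pi^\ast_{x,y}$; averaging out $y_1$ then gives $\pi_{x_2\mid x_1,z_1}=P_{x_2\mid x_1}$, and symmetrically for $z_2$. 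So $\pi^\ast_{x,z}$ is Markov. Alternatively, one can sidestep the issue entirely: R\"uschendorf's Theorem~\ref{th:ru} (which the paper invokes later) shows that for costs of the form $\sum_j\alpha_j(z)d(x_j,z_j)$ the infimum over all couplings is attained by a Markov coupling, so it is harmless to let $\pi^\ast_{x,z}$ be an arbitrary coupling. Your secondary concern about the lower semi-continuity of $\tilde\alpha$ is also not addressed in the paper's proof; your suggested workarounds are standard.
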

\begin{rmk} 
As a consequence of the Lemma \ref{lem:wal}, we obtain the triangular inequality for $\tilde W$
\begin{equation}\label{ti1}
\tilde W_{p,d}(P,R)\le \tilde W_{p,d}(P,Q)+\tilde W_{p,d}(Q,R)
\end{equation}
by taking the supremum on $\alpha$ on both sides of \eqref{eq:wal} and using the relation $Q[\tilde \alpha_j(Y)]^q\le R[\alpha_j^q(Z)]$.
\end{rmk}
\begin{proof}
Let us fix  $\alpha \in M^+(E^n)$ such that $R[\alpha_j^q]<\infty$ for all $1\le j\le n$. Define recursively the couplings  $\pi_{y,z}^\ast$ and $\pi_{x,y}^\ast$ $\in\tilde M(E^2)$ such that 
\begin{eqnarray*}
\pi_{y,z}^\ast\Big[\sum_{j=1}^n \alpha_j(Z)d(X_j,Z_j)\Big]& =&  \tilde W_{\alpha,d}(Q,R),\\
\pi_{x,y}^\ast\Big[\sum_{j=1}^n\pi_{z|y}^\ast[\alpha_j(Z)|Y]d(X_j,Y_j)\Big]&=&   \tilde W_{\pi_{z|y}^\ast[\alpha(Z)|Y],d}(P,Q).\end{eqnarray*}
where we use Jensen's inequality.
Let us denote $\pi^\ast=\pi^\ast_{x,y,z}$ obtained by the gluing Lemma \ref{lem:condglu} of $\pi_{x,y}^\ast$ and $\pi_{y,z}^\ast$.
Then 
\begin{eqnarray}
\nonumber \pi_{x,z}^\ast\Big[\sum_{j=1}^n\alpha_jd(X_j,Z_j)\Big]&\le&\pi_{y,z}^\ast\Big[\sum_{j=1}^n\alpha_j(Z)d(X_j,Y_j)\Big]+\pi^\ast\Big[\sum_{j=1}^n\alpha_j(Z)d(Y_j,Z_j)\Big]\\
\nonumber &\le&\pi^\ast_{x,y}\Big[\sum_{j=1}^n\pi^\ast_{z,y}[\alpha_j(Z)|Y]d(X_j,Y_j)\Big] \\
\nonumber &&+\pi^\ast_{y,z}\Big[\sum_{j=1}^n\alpha_j(Z)d(Y_j,Z_j)\Big]\\
\label{eq:w1}&\le& \tilde W_{\pi_{z|y}^\ast[\alpha(Z)|Y],d}(P,Q)+\tilde W_{\alpha,d}(Q,R).\end{eqnarray}
The inequality \eqref{eq:wal} follows from \eqref{eq:w1} taking $\tilde \alpha_j=\pi_{y,z}^\ast[\alpha_j(Z)|Y=\cdot]$ and noticing that   the relation $Q[\tilde \alpha_j^2(Y)]\le R[\alpha_j^2(Z)]$ holds by an application of Jensen's inequality.
\end{proof}

\section{Weak transport inequalities}\label{sec:wti}
\subsection{Weak transport inequalities and dual forms}
Let us say that the probability measure $P$ on $E^n$, $n\ge 1$, satisfies the weak transport inequality $\tilde T_{p,d}(C)$ when for all distribution $Q$ on $E^n$ we have
\begin{equation}\label{wtci}
\tilde W_{p,d}(P,Q)\le \sqrt{2C\mathcal K(Q|P)}.
\end{equation}
Let us say that $P$ satisfies the inverted weak transport inequality  $\tilde T_{p,d}^{(i)}(C)$ when 
\begin{equation}\label{iwtci}
\tilde W_{p,d}(Q,P)\le \sqrt{2C\mathcal K(Q|P)}.
\end{equation}
By an application of Jensen's inequality, $P$ satisfies $\tilde T_{p,d}(C)$ and $\tilde T_{p,d}^{(i)}(C)$ as soon as $\tilde T_{p',d}(C)$ and $\tilde T_{p',d}^{(i)}(C)$ reciprocally with $p'\ge p$. By the dual form of the $\ell^p$-norm $\|\cdot\|_p$, the weak transport inequalities $\tilde T_{p,d}^(C)$ or $\tilde T_{p,d}^{(i)}(C)$ on the product space $E^n$ where $E$ is equipped with $d$ are equivalent, respectively, to $\tilde T_{p,d}^(C)$ or $\tilde T_{p,d}^{(i)}(C)$ on $E^n$ equipped with $d^{\ell_p}p(x,y)=\|(d(x_j,y_j))_{1\le j\le p}\|_p$. Thus, when $d$ is not specified, we consider the case $n=1$ only with no loss of generality. We have $\tilde T_{1,d}(C)=\tilde T_{1,d}^{(i)}(C)= T_{1,d}(C)$ where $T_{1,d}(C)$ is the classical transport inequality  defined by the relation
$$
\inf_{\pi\in\tilde M(P,Q)} \pi[d(X_j,Y_j)]\le \sqrt{2C\mathcal K(Q|P)}.
$$
Following \cite{bobkov:gotze:1999}, we investigate the dual form of the weak transport. Denote $$f_{\alpha,d}(y)=\inf_{x\in E } \{ \alpha (y)d(x ,y )+f(x) \}$$ and $\mathcal C_b$ the set of continuous bounded functions with values in $\R$, we have the following dual forms of the weak transport inequalities:
\begin{thm}\label{th:E1}
The weak transport inequalities $\tilde T_{p,d}(C)$ and $\tilde T_{p,d}^{(i)}(C)$ are equivalent, respectively, to
\begin{align}\label{eq:df}
\sup_{\lambda>0}\sup_{\alpha\in M^+(E)}\sup_{f\in \mathcal C_b}P\Big[\exp\Big(\lambda (f_{\alpha,d} -P[f])- C\lambda^2\Big(\frac{\alpha^q-1}q+\frac12\Big)\Big)\Big]&\le 1,\\
\label{eq:df1}
\sup_{\lambda>0}\sup_{\alpha\in M^+(E)}\sup_{f\in \mathcal C_b}P\Big[\exp\Big(\lambda (f_{\alpha,d} -P[f])-C\lambda^2\Big(\frac{P[\alpha^q]-1}q+\frac12\Big)\Big)\Big]&\le 1.
\end{align} 
\end{thm}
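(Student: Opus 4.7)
The plan is a Bobkov--G\"otze style duality argument: one moves between the transport form (involving $\mathcal K(Q|P)$) and the exponential form (involving $P$-integrals of exponentials) via the entropy variational formula $\log P[e^g]=\sup_Q\{Q[g]-\mathcal K(Q|P)\}$. Young's inequality together with the concavity estimate $t^{2/q}\le 1+(2/q)(t-1)$ -- valid for $t\ge 0$ whenever $q\ge 2$ -- is what converts the $Q[\alpha^q]^{1/q}$ normalization of the transport cost into the $(\alpha^q-1)/q+1/2$ penalty appearing inside the dual forms \eqref{eq:df} and \eqref{eq:df1}.

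First I would record the Kantorovich--Rubinstein duality for the lower semi-continuous cost $c(x,y)=\alpha(y)d(x,y)$, giving $\tilde W_{\alpha,d}(P,Q)=\sup_{f\in\mathcal C_b}\{Q[f_{\alpha,d}]-P[f]\}$ and the corresponding expression for $\tilde W_{\alpha,d}(Q,P)$ with $P,Q$ interchanged. Combined with the supremum over $\alpha\in M^+(E)$ in the definition of $\tilde W_{p,d}$, $\tilde T_{p,d}(C)$ is equivalent to the pointwise dual bound $Q[f_{\alpha,d}]-P[f]\le Q[\alpha^q]^{1/q}\sqrt{2C\mathcal K(Q|P)}$ for all $\alpha,f,Q$, and $\tilde T_{p,d}^{(i)}(C)$ to the analogous inequality with $P,Q$ exchanged on the left and $P[\alpha^q]$ on the right.

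For the direction transport $\Rightarrow$ exponential, fix $\alpha,f$, $\lambda>0$ and set $g=\lambda(f_{\alpha,d}-P[f])-C\lambda^2((\alpha^q-1)/q+1/2)$. By the variational formula it suffices to show $Q[g]\le \mathcal K(Q|P)$ for every $Q$. Applying Young's inequality $ab\le a^2/2+b^2/2$ with $a=\lambda\sqrt{C}\,Q[\alpha^q]^{1/q}$ and $b=\sqrt{2\mathcal K(Q|P)}$ to the dualized transport bound yields $\lambda(Q[f_{\alpha,d}]-P[f])\le C\lambda^2Q[\alpha^q]^{2/q}/2+\mathcal K(Q|P)$, and the concavity estimate gives $Q[\alpha^q]^{2/q}/2\le (Q[\alpha^q]-1)/q+1/2=Q[(\alpha^q-1)/q+1/2]$. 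Subtracting $C\lambda^2Q[(\alpha^q-1)/q+1/2]$ from both sides produces $Q[g]\le\mathcal K(Q|P)$ and hence $P[e^g]\le 1$, which is \eqref{eq:df}. The derivation of \eqref{eq:df1} from $\tilde T_{p,d}^{(i)}(C)$ is parallel: since $P[\alpha^q]$ is already a constant, the same Young and concavity steps produce the required constant penalty directly.

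The reverse direction is where the main technical care is needed. Applying the variational formula to \eqref{eq:df} gives $\lambda(Q[f_{\alpha,d}]-P[f])\le C\lambda^2((Q[\alpha^q]-1)/q+1/2)+\mathcal K(Q|P)$ for every $\lambda>0$; however, optimizing in $\lambda$ alone yields the bound $(4C/q)(Q[\alpha^q]+q/2-1)\mathcal K(Q|P)$ on $(Q[f_{\alpha,d}]-P[f])^2$, which matches the target $2C\,Q[\alpha^q]^{2/q}\mathcal K(Q|P)$ only when $p=q=2$. The remedy is the joint rescaling $(f,\alpha)\mapsto(\mu f,\mu\alpha)$, under which $(\mu f)_{\mu\alpha,d}=\mu f_{\alpha,d}$, producing the family
\begin{equation*}
\mu\lambda\bigl(Q[f_{\alpha,d}]-P[f]\bigr)\le C\lambda^2\Bigl(\frac{\mu^qQ[\alpha^q]}{q}+\frac{q-2}{2q}\Bigr)+\mathcal K(Q|P).
\end{equation*}
Writing $\eta=\lambda\mu$, minimizing in $\mu>0$ at fixed $\eta$ gives the stationary point $\mu^q=1/Q[\alpha^q]$ and collapses the $\mu$-dependent terms into $C\eta^2Q[\alpha^q]^{2/q}/2$; a final minimization in $\eta$ restores $Q[f_{\alpha,d}]-P[f]\le Q[\alpha^q]^{1/q}\sqrt{2C\mathcal K(Q|P)}$, and taking the supremum over $\alpha,f$ recovers $\tilde T_{p,d}(C)$. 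The inverted case is identical, with the simplification that $P[\alpha^q]$ does not depend on $\mu$. This joint $(\lambda,\mu)$-optimization -- precisely the inverse of the concavity step encoded in the RHS of the dual forms -- is the main subtlety of the proof.
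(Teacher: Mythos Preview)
Your argument is correct and, in fact, more explicit than the paper's. Both proofs start from the same Kantorovich--Rubinstein dual $\tilde W_{\alpha,d}(P,Q)=\sup_f\{Q[f_{\alpha,d}]-P[f]\}$ and the entropy variational formula, but they diverge in how the normalization $Q[\alpha^q]^{1/q}$ is linearized. The paper applies the $(p,q)$-Young identity $ab=\inf_{\lambda>0}\{\lambda a^q/q+b^p/(\lambda^{p-1}p)\}$ together with a second Young inequality for $x\mapsto (p/2)x^{2/p}$; you instead use the quadratic Young inequality $ab\le a^2/2+b^2/2$ followed by the tangent-line bound $t^{2/q}\le 1+(2/q)(t-1)$, which is more elementary and makes the appearance of the penalty $(\alpha^q-1)/q+1/2$ completely transparent. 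For the converse, the paper's write-up does not spell out the reverse implication (it relies implicitly on the saturation of the Young identities), whereas you treat it directly via the homogeneity $(\mu f)_{\mu\alpha,d}=\mu f_{\alpha,d}$ and a two-parameter optimization in $(\eta,\mu)$; this is a genuine addition and a clean way to recover the sharp constant for $1<p<2$.

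One small slip: in the inverted case your claim that ``$P[\alpha^q]$ does not depend on $\mu$'' is not correct, since after the rescaling $\alpha\mapsto\mu\alpha$ one has $P[(\mu\alpha)^q]=\mu^qP[\alpha^q]$. The optimization in $\mu$ is therefore needed exactly as in the direct case (with $Q[\alpha^q]$ replaced by $P[\alpha^q]$); the argument goes through unchanged, only your description of it as a simplification should be dropped.
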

\begin{proof}
As their proofs are similar, we prove the first dual form only.
From the dual form of $\tilde W_{\alpha,d}$ for $\alpha\in M^+(E)$ fixed we have
$$
\tilde W_{\alpha,d}(P,Q)=\inf_\pi \pi\Big[\alpha(Y)d(X_j,Y_j)\Big]=\sup_{f\in \mathcal C_b}Q[f_{\alpha,d}]-P[f].
$$
Then a measure $P$ satisfies  $\tilde T_{p,d}(C)$ if  for any $\alpha\in M^+(E)$ and any probability measure $Q$
$$
\sup_{f\in \mathcal C_b}Q[f_{\alpha,d}]-P[f]\le Q[\alpha^q]^{1/q}\sqrt{2C\mathcal K(Q|P)}.
$$
From the variational identity 
$$
ab=\inf_{\lambda>0}\lambda  a^q/q+b^p/(\lambda^{p-1} p)
$$
we get for all $\lambda>0$:
$$
Q[(f_{\alpha,d}-P[f])]\le \lambda C /q Q[\alpha^q]+ \mathcal K(Q|P)^{p/2} 2^{p/2}C^{1-p/2}/(\lambda^{p-1}p).
$$
We can rewrite it as
$$
(p/2)Q[(p/C)^{1-p/2}\lambda^{p-1}(f_{\alpha,d}-P[f]-\lambda C   /q \alpha^q)]^{2/p}\le \mathcal K(Q|P).
$$
From the Young inequality
$$
(p/2)x^{2/p}\ge yx -(1-p/2)y^{2/(2-p)} 
$$
applied with $y=(C\lambda^2/p)^{2/p-1}$ we obtain
$$
(p/2)((p/C)^{1-p/2}\lambda^{p-1})^{2/p}x^{2/p}\ge x-(1-p/2)C\lambda^2/p
$$
For $x=Q[\lambda(f_{\alpha,d}-P[f]-\lambda C   /q \alpha^q)]$ we obtain
$$
Q [\lambda(f_{\alpha,d}-P[f]- { \lambda C}  \alpha^q/q)]-\mathcal K(Q|P)\le (1/p-1/2) C\lambda^{2}.
$$
Then the desired result follows from the variational formula of the entropy.
 \end{proof}
In the case $p=1$, then $q=\infty$ and the dual forms \eqref{eq:df} and \eqref{eq:df1} only depends on $\alpha$ through the fact that $\alpha(y)\le 1$, $y\in E$. Then one can consider $\alpha=1$, $f_{\alpha,d}(y)=\inf_{x\in E}\{d(x,y)+f(x)\Big\}$ that forces to consider Lipschitz functions and we recognize the dual form of the  transport inequality $T_{1,d}$ that is the Hoeffding inequality:
$$
\sup_{\lambda>0}\sup_{f \in{\tiny \rm Lip}_1(d)}P\Big[\exp\Big(\lambda (f  -P[f])- \frac{C\lambda^2}2\Big)\Big]\le 1.
$$
Here $\Lip_1$ is the set of $1$-Lipschitz functions $f$ with respect to $d$.

To obtain similar results when $p=2$, it is crucial to identify the map $f\to f_{\alpha,d}$. In the sequel, we focus on the cases $d$ the Hamming distance and the Euclidian norm in $\R^n$.

\subsection{The specific case $d=1$, the Hamming distance}\label{sec:hd} The weak transport inequalities when $d(x,y)=1_{x\neq y}$, denoted $d=1$, was introduced in \cite{marton:1996a}. When $n=1$ we have the explicit expression $f_{\alpha,1}(y)= (\alpha(y)+\inf f)\wedge f(y)$. As the difference $f_{\alpha,1}-f$ is unchanged when adding a constant on $f$, we can take $\inf f=0$ with no loss of generality and
$$
\sup_{\alpha>0}P[\exp(\lambda (f_{\alpha,1} -P[f])-(\lambda \alpha)^2C/2)]=P[\exp(\lambda(f-P[f])-\lambda^2f^2C/2))].
$$
But for any $X>0$ we have $X-X^2/2\le \log(1+X)$ and thus
$$
P[\exp(X-X^2/2)]\le 1+P[X]\le \exp(P[X]).
$$
$\tilde T_{2,1}(1)$ follows by taking $X=\lambda f$.
For the inverted weak transport, we apply the inequality 
$$P[\exp(-X)]\le 1-P[X]+P[X^2]/2\le \exp(-P[X]+P[X^2]/2)$$ and we obtain
\begin{cor}\label{cor:hd}
Any measure $P$ on $E$ satisfies $\tilde T_{2,1}(1)$ and $\tilde T_{2,1}^{(i)}(1)$.
\end{cor}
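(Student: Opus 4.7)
My plan is to verify the dual forms \eqref{eq:df} and \eqref{eq:df1} from Theorem \ref{th:E1} at $(p,q,C)=(2,2,1)$, so that the corollary will follow from the equivalence established there. Using the explicit formula $f_{\alpha,1}(y)=(\alpha(y)+\inf f)\wedge f(y)$ and the invariance of the integrand in the dual forms under $f\mapsto f+c$, I first normalize $\inf f=0$; then $f\ge 0$ and $f_{\alpha,1}(y)=\min(f(y),\alpha(y))\in[0,\alpha(y)]$.

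For $\tilde T_{2,1}(1)$ the key step is the pointwise estimate
\[
\lambda f_{\alpha,1}(y)-\tfrac{\lambda^2}{2}\alpha(y)^2\le \log(1+\lambda f(y)),\qquad \lambda>0,\ \alpha(y)\ge 0,
\]
which I will obtain from the scalar bound $X-X^2/2\le \log(1+X)$ ($X\ge 0$) via a two-case split: if $\alpha(y)\le f(y)$ apply it with $X=\lambda\alpha(y)$ and use monotonicity of $\log(1+\cdot)$; if $\alpha(y)>f(y)$ first use $\alpha(y)^2\ge f(y)^2$ to replace $\alpha$ by $f$ in the quadratic term and then apply the inequality with $X=\lambda f(y)$. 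Exponentiating and integrating then yields $P[\exp(\lambda f_{\alpha,1}-\lambda^2\alpha^2/2)]\le 1+\lambda P[f]\le e^{\lambda P[f]}$, which is exactly \eqref{eq:df}.

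For $\tilde T^{(i)}_{2,1}(1)$ I will instead use the companion scalar inequality $e^{-X}\le 1-X+X^2/2$ for $X\ge 0$, which integrates to $P[e^{-X}]\le \exp(-P[X]+P[X^2]/2)$. Applying this integrated bound with $X=\lambda f_{\alpha,1}\ge 0$ and then chaining through the pointwise inequalities $e^{-\lambda f}\le e^{-\lambda f_{\alpha,1}}$ (valid for $\lambda>0$ since $f\ge f_{\alpha,1}$) and $f_{\alpha,1}^2\le \alpha^2$ yields
\[
P[e^{-\lambda f}]\le P[e^{-\lambda f_{\alpha,1}}]\le \exp\!\bigl(-\lambda P[f_{\alpha,1}]+\tfrac{\lambda^2}{2}P[\alpha^2]\bigr),
\]
which is the dual form \eqref{eq:df1} after rearrangement. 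The only delicate point is the casework in the non-inverted part: the supremum over $\alpha$ in \eqref{eq:df} is not literally attained at $\alpha=f$ as the preceding discussion might suggest, and the two-case split is what makes the exponent bound uniform in $\alpha$; everything else is a routine exponentiation-and-integration.
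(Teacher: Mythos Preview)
Your argument is correct and follows the paper's own approach: both use the explicit formula $f_{\alpha,1}=(\alpha+\inf f)\wedge f$, normalize $\inf f=0$, and then invoke the two scalar inequalities $X-X^2/2\le\log(1+X)$ and $e^{-X}\le 1-X+X^2/2$ for $X\ge 0$. Your two-case split for the non-inverted part is a genuine improvement over the paper's one-line claim that the supremum over $\alpha$ reduces to $\alpha=f$; as you correctly observe, that identity fails when $\lambda f(y)>1$ (the pointwise optimum is then $\alpha(y)=1/\lambda$, giving exponent $1/2$), but your case analysis shows the exponent is nonetheless bounded by $\log(1+\lambda f(y))$ in all regimes, which is what is actually needed. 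For the inverted part, your displayed bound $P[e^{-\lambda f}]\le\exp(-\lambda P[f_{\alpha,1}]+\lambda^2P[\alpha^2]/2)$ is indeed the correct dual characterization of $\tilde T^{(i)}_{2,1}(1)$ (it is what one obtains by running the proof of Theorem~\ref{th:E1} for the inverted cost $\tilde W_{\alpha,1}(Q,P)=\sup_f\{P[f_{\alpha,1}]-Q[f]\}$); calling it ``\eqref{eq:df1} after rearrangement'' is a bit loose, since the roles of $f$ and $f_{\alpha,1}$ are swapped relative to the literal statement of \eqref{eq:df1}, but this is exactly the alternative dual form the paper itself uses later (see the proof of Theorem~\ref{th:tsi}).
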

\begin{rmk}
Alternative proofs of the Corollary \ref{cor:hd} are given in \cite{marton:1996a} and a stronger version is given in \cite{samson:2000}. \end{rmk}Consider its extension $\tilde T_{2,1_2}$ on $E^n$, where $1_2$ denotes
$$
1_2(x,y)^2=\sum_{j=1}^n1_{x_j\neq y_j},\qquad x,y\in E^n.
$$
Extending the previous reasoning, we obtain the dual form
\begin{prop}\label{prop:d1}
The weak transport inequalities $\tilde T_{p,1_2}(C)$ and $\tilde T_{p,1_2}^{(i)}(C)$ are equivalent to that for any $f$ and $\alpha_j$, $1\le j\le n$, satisfying
\begin{equation}\label{eq:sb}
f(x)-f(y)\le \sum_{j=1}^n\alpha_j(x)1_{x_j\neq y_j},\qquad x,y\in E^n,
\end{equation}
it holds, respectively,
\begin{align*} 
P\Big[\exp\Big(\lambda (f-P[f])- \frac{C\lambda^2}2\sum_{j=1}^n\alpha_j^2\Big)\Big)\Big]&\le 1,\qquad \lambda>0,\\
P\Big[\exp\Big(\lambda (P[f]-f)-\frac{C\lambda^2}2\sum_{j=1}^nP[\alpha_j^2]\Big)\Big)\Big]&\le 1,\qquad \lambda>0.
\end{align*} 
\end{prop}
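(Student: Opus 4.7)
My plan is to interpret the proposition as the dual form of the weak transport inequality, specialised to $d=1$ (Hamming) and $p=2$ on the product space $E^n$. The first step is to extend the dual form of Theorem~\ref{th:E1} verbatim to the product setting with a vector $\alpha=(\alpha_j)_{1\le j\le n}$: by Kantorovich duality applied to the asymmetric cost $c_\alpha(x,y)=\sum_{j=1}^n\alpha_j(y)1_{x_j\neq y_j}$ together with the variational formula for relative entropy, exactly as in the proof of Theorem~\ref{th:E1}, one checks that $\tilde T_{2,1_2}(C)$ is equivalent to
\[
P\!\left[\exp\!\left(\lambda(g_{\alpha,1_2}-P[g])-\frac{C\lambda^2}{2}\sum_{j=1}^n\alpha_j^2\right)\right]\le 1 \qquad (\lambda>0,\ g\in\mathcal C_b(E^n)),
\]
with $g_{\alpha,1_2}(y):=\inf_{x\in E^n}\{\sum_j\alpha_j(y)1_{x_j\neq y_j}+g(x)\}$; the inverted inequality $\tilde T_{2,1_2}^{(i)}(C)$ corresponds to the same bound with $P[\alpha_j^2]$ in place of the pointwise $\alpha_j^2$.

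The central observation, which yields the forward direction ``$\tilde T\Rightarrow$ Proposition'', is that whenever $(f,\alpha)$ satisfies \eqref{eq:sb} one has $f_{\alpha,1_2}=f$. Choosing $x=y$ in the infimum gives $f_{\alpha,1_2}\le f$; for the reverse inequality I would use the self-symmetry $1_{x_j\neq y_j}=1_{y_j\neq x_j}$ of the coordinate indicator to interchange $x$ and $y$ in \eqref{eq:sb}, obtaining $f(y)\le f(x)+\sum_j\alpha_j(y)1_{x_j\neq y_j}$ for every $x$, and hence, after taking the infimum over $x$, $f(y)\le f_{\alpha,1_2}(y)$. Substituting $g=f$ into the dual form above, so that $g_{\alpha,1_2}$ collapses to $f$, produces exactly the first exponential bound of the proposition, and the same argument applied to the inverted dual form gives the second.

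For the converse direction (the proposition's inequalities imply $\tilde T_{2,1_2}(C)$), I would follow the strategy of Corollary~\ref{cor:hd}: given arbitrary $g$ and $\alpha$, let $\hat g$ be the largest function bounded above by $g$ that satisfies \eqref{eq:sb} with $\alpha$, constructed as the pointwise decreasing limit of the iterates $g,\,g_{\alpha,1_2},\,(g_{\alpha,1_2})_{\alpha,1_2},\ldots$; this limit is a fixed point of the inf-envelope, and by the swap used in the forward step the pair $(\hat g,\alpha)$ satisfies \eqref{eq:sb}. Applying the proposition to $(\hat g,\alpha)$ and optimising over $\lambda>0$ via the Gibbs variational formula gives $Q[\hat g]-P[\hat g]\le(\sum_j Q[\alpha_j^2])^{1/2}\sqrt{2C\mathcal K(Q|P)}$; the required bound on $Q[g_{\alpha,1_2}]-P[g]$ then follows after reconciling the $P$- and $Q$-integrals of $\hat g$, $g_{\alpha,1_2}$ and $g$.

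The hard part will be this reverse direction: for $n\ge 2$ with $\alpha_j$'s depending nontrivially on their argument, the function $g_{\alpha,1_2}$ itself need \emph{not} satisfy \eqref{eq:sb} with the same $\alpha$, so controlling the loss incurred in passing from $\hat g$ back to $g_{\alpha,1_2}$ while keeping the variance proxy $\sum_j\alpha_j^2$ unchanged is the delicate technical point. The forward direction, by contrast, rests only on the self-symmetry of the Hamming indicator and the collapse $f=f_{\alpha,1_2}$, and is the clean and useful half of the equivalence for the applications that follow.
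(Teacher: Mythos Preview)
Your forward direction is correct and is precisely the paper's approach. The paper only says ``extending the previous reasoning'' from the $n=1$ computation of $f_{\alpha,1}$; your swap argument is a clean way to obtain $f_{\alpha,1_2}=f$ directly from \eqref{eq:sb}, and substituting into the product-space version of Theorem~\ref{th:E1} gives the first exponential bound.

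One point needs correction. The inverted dual form is \emph{not} ``the same bound with $P[\alpha_j^2]$ in place of the pointwise $\alpha_j^2$''. Redoing the proof of Theorem~\ref{th:E1} for $\tilde T^{(i)}_{2,1_2}(C)$, Kantorovich duality gives $\tilde W_{\alpha,1_2}(Q,P)=\sup_f\{P[f_{\alpha,1_2}]-Q[f]\}$, and then the variational formula for entropy (applied to $Q\mapsto Q[-\lambda f]$) yields
\[
P\Big[\exp\Big(\lambda\big(P[f_{\alpha,1_2}]-f\big)-\tfrac{C\lambda^2}{2}\sum_jP[\alpha_j^2]\Big)\Big]\le 1,
\]
i.e.\ the roles of the pointwise value and the $P$-expectation are swapped relative to \eqref{eq:df}. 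It is this swap, together with $f_{\alpha,1_2}=f$, that produces the sign $P[f]-f$ in the second inequality of the proposition. Your sentence as written would give $f-P[f]$ again.

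On the reverse direction, your worry is justified: for $n\ge 2$ with $\alpha_j$ depending on all coordinates, $(g_{\alpha,1_2},\alpha)$ need not satisfy \eqref{eq:sb}, so one cannot simply apply the proposition to $f=g_{\alpha,1_2}$. Your iteration does produce a fixed point $\hat g$ with $(\hat g,\alpha)$ satisfying \eqref{eq:sb}, but the pointwise gap $g_{\alpha,1_2}-\hat g\ge 0$ goes the wrong way inside the exponential, and the compensating term $P[g]-P[\hat g]$ is only an average; your ``reconciling'' step is a genuine gap, not a routine estimate. The paper itself offers no argument here beyond the $n=1$ optimisation $\alpha=f-\inf f$, which does not obviously extend. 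Since every downstream application in the paper (Propositions~\ref{pr:tal} and \ref{pr:tal2}, the oracle inequalities) uses only the forward implication, this converse is not needed for what follows.
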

The weak transport inequalities $\tilde T_{2,1_2}(C)$ and $\tilde T_{2,1_2}^{(i)}(C)$, extended to $E^n$, $n>1$, are well suited to assert the concentration of measures, see \cite{marton:1996a}, especially through self bounding functions $f$, see \cite{boucheron:lugosi:massart:2013}. More generally, assume that $f$ satisfies \eqref{eq:sb} with functions $\alpha_j$ such that
$
\sum_{j=1}^n\alpha_j^2\le f$.
Let $A\subset E^n$ be a measurable set. Denoting
$$
d_T(x,A)=\sup_{\|c\|\le 1}\inf_{y\in A}\sum_{j=1}^nc_j1_{x_j\neq y_j},
$$
noticing that $d_T^2(x,A)/4$ is self-bounding, we derive the Talagrand inequality \cite{talagrand:1995} in its generalized form given in \cite{johnson:schechtman:1991}:
\begin{prop}\label{pr:tal}
If the law $P$ of $X=(X_1,\ldots,X_n)$ satisfies $\tilde T_{2,1_2}(C)$ and $\tilde T_{2,1_2}^{(i)}(C)$ then 
$$
P\Big[\exp\Big(\frac{d_T^2(x,A)}{4C}\Big)\Big]\le \frac1{P(A)}.
$$
\end{prop}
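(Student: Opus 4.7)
The plan is to exhibit $g := d_T^2(\cdot,A)/4$ as self-bounding in the sense of Proposition \ref{prop:d1}, and then to extract the claimed exponential moment from its two dual forms.

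For the self-bounding step, I would fix, for each $x$, an optimizer $c^\ast(x) \in \R^n_+$ with $\|c^\ast(x)\|_2 \le 1$ in the saddle definition of $d_T(x,A)$. The triangle bound $1_{x_j \neq z_j} \le 1_{y_j \neq z_j} + 1_{x_j \neq y_j}$, passage to the infimum over $z \in A$, together with the admissibility of $c^\ast(x)$ in the definition of $d_T(y,A)$, jointly give $d_T(x,A) - d_T(y,A) \le \sum_j c^\ast_j(x) 1_{x_j \neq y_j}$. Multiplying through by $(d_T(x,A)+d_T(y,A))/4 \le d_T(x,A)/2$ in the case $d_T(x,A) \ge d_T(y,A)$ (the converse case being trivial, since the $\alpha_j$'s below are non-negative) yields the Lipschitz-like condition \eqref{eq:sb} for $g$ with $\alpha_j(x) := d_T(x,A)\, c^\ast_j(x)/2$, together with the self-bounding relation $\sum_j \alpha_j^2 \le d_T^2/4 = g$.

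Applying Proposition \ref{prop:d1} to $g$ with these $\alpha_j$'s gives, for every $\lambda>0$, the pair of dual bounds
\[
P\Big[\exp\Big(\lambda(g-P[g])-\tfrac{C\lambda^2}{2}\,g\Big)\Big]\le 1, \qquad
P\Big[\exp\Big(\lambda(P[g]-g)-\tfrac{C\lambda^2}{2}\,P[g]\Big)\Big]\le 1.
\]
Restricting the inverse form to $A$, on which $g \equiv 0$, and optimizing at $\lambda = 1/C$ (the maximizer of $\lambda-C\lambda^2/2$) yields the key a priori estimate $P[g] \le 2C\log(1/P(A))$. Feeding this back into the direct dual form controls $P[\exp(\mu g)]$ as a function of $\mu$, and pushing the scale $\mu$ up to $1/C$ should produce $P[\exp(d_T^2(\cdot,A)/(4C))] = P[\exp(g/C)] \le 1/P(A)$.

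The hard part will be this last step. A naive combination of the two dual forms at $\lambda = 1/C$ delivers only $P[\exp(g/(2C))] \le 1/P(A)^2$, which is strictly weaker than the claimed Talagrand inequality (Jensen's inequality does not go the right way). Reaching the sharp constant $4C$ in the denominator of $d_T^2/(4C)$ will require exploiting the self-bounded structure at every scale, either via an iterated application of the direct dual form at varying $\lambda$, or by deriving a Herbst-type differential inequality on the log-moment generating function of $g$, in the spirit of the treatment of self-bounding functions by Boucheron, Lugosi and Massart.
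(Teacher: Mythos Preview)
Your self-bounding setup for $g=d_T^2/4$ is exactly the paper's, and so is the use of the first dual inequality of Proposition~\ref{prop:d1}. The methodological difference lies in how $P[d_T^2]$ is controlled. The paper does \emph{not} use the second dual form; it applies the primal inverted inequality~\eqref{iwtci} directly with $Q=P\!\mid_A$. The gain is that under any coupling with $Y\sim Q$ one has $Y\in A$ almost surely, hence $d_T^2(X)-d_T^2(Y)=d_T^2(X)\le d_T(X)\sum_j c^\ast_j(X)\,1_{X_j\neq Y_j}$, so the weight $\alpha_j=d_T\,c^\ast_j$ works \emph{without} the factor $2$ coming from the convex inequality. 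Plugging $\sum_jP[\alpha_j^2]\le P[d_T^2]$ into $\tilde T^{(i)}_{2,1_2}(C)$ gives $P[d_T^2]\le\sqrt{2C\,P[d_T^2]\,(-\log P(A))}$, i.e.\ $P[d_T^2]\le -2C\log P(A)$, four times sharper than what your restriction-to-$A$ argument on the dual side yields. This is the trick you are missing and should adopt.

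Your concern about the final step is legitimate. For the direct dual form the Lipschitz bound~\eqref{eq:sb} must hold for \emph{all} $y$, which forces $\alpha_j=2d_T c^\ast_j$ and $\sum_j\alpha_j^2=4d_T^2$; the resulting inequality $P[\exp((\lambda-2C\lambda^2)d_T^2)]\le\exp(\lambda P[d_T^2])$ has maximal exponent $1/(8C)$ at $\lambda=1/(4C)$. Combined with the sharp mean bound above this gives $P[\exp(d_T^2/(8C))]\le P(A)^{-1/2}$, not the stated $P[\exp(d_T^2/(4C))]\le P(A)^{-1}$. The paper's own choice $\lambda=1/(2C)$ makes the exponent $\lambda-2C\lambda^2$ vanish, so its claimed intermediate bound $P[\exp(d_T^2/(4C))]\le\exp(P[d_T^2]/(2C))$ appears to carry a factor-of-two slip (it would follow from $\sum_j\alpha_j^2\le 2d_T^2$, which is false). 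In short: the paper does not use a Herbst argument, and your diagnosis that the stated constant $4C$ is not actually reached by the written proof is correct.
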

\begin{rmk}
If the $X_i$ are independent, Theorem \ref{th:wti} yields that $P$ satisfies $\tilde T_{2,1_2}(1)$ or $\tilde T^{(i)}_{2,1_2}(1)$ and the constant $C=1$ is optimal, see \cite{talagrand:1995}.\end{rmk}
\begin{proof}
Let $c^\ast$ be the weights that achieves the supremum in $d_T$. We have 
\begin{align*}
d_T(x,A)-d_T(y,A)&\le \inf_{x'\in A}\sum_{j=1}^nc_j^\ast(x)1_{x_j\neq x'_j}-\inf_{y'\in A}\sum_{j=1}^nc_j^\ast(y)1_{y_j\neq y'_j}\\
&\le \sum_{j=1}^nc_j^\ast(x)1_{x_j\neq y_j}.
\end{align*}
Then, by the convex inequality $x^2-y^2\le 2x(x-y)$, we obtain
$$
d_T(x,A)^2-d_T(y,A)^2\le  \sum_{j=1}^n 2d_T(x,A)c_j^\ast(x)1_{x_j\neq y_j}.
$$
Thus $f(x)=d_T(x,A)^2$ satisfies \eqref{eq:sb} with $\alpha_j(x)$ satisfying
$
\sum_{j=1}^n\alpha_j(x)\le 4d_T(x,A)^2
$. Applying the first inequality in Proposition \ref{prop:d1} with $\lambda=1/(2C)$ we obtain 
$P[\exp(d_T^2(X,A)/(4C))]\le\exp(P[d_T^2(X,A)]/(2C)]).$

Using directly \eqref{iwtci}, we also have
$$
P[d_T^2(X,A)]\le Q[d_T^2(X,A)]+\sqrt{2CP[d_T^2(X,A)]\mathcal K(Q|P)},\qquad Q\in M^+(E^n).
$$
Choosing $Q$ as $P$ restricted to $\{d_T(X,A)=0\}=\{X\in A\}$, noticing that $Q[d_T^2(X,A)]=0$ and $\mathcal K(Q|P)=-\log(P(A))$, we obtain 
$
P[d_T^2(X,A)]\le \sqrt{-2C\log(P(A))}$, equivalent to $\exp(P[d_T^2(X,A)]/(2C)])\le 1/P(A)$. The desired result follows.
\end{proof}

\subsection{The specific case $d=N$, the euclidian metric}
Next we consider the case of $E=\R^n$ equipped with the euclidian norm $\|\cdot\|$ that we denote $d=N$. We obtain
\begin{thm}\label{th:tsi}
The weak transport inequalities $T_{2,N}(C)$ and $T_{2,N}^{(i)}(C)$ are equivalent, respectively, to
\begin{align}\label{eq:sg}
P[\exp(g-P[g]-C\|\nabla g\|^2/2]\le1,\mbox{ for any separately convex function $g$},\\
\label{eq:sg2}
P[\exp(g-P[g]-CP[\|\nabla g\|^2]/2]\le1,\mbox{ for any separately concave function $g$}.
\end{align}
\end{thm}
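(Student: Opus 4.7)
The plan is to specialize the dual form of Theorem~\ref{th:E1} to $p = q = 2$ and $d = N$. Because $(\alpha^q - 1)/q + 1/2 = \alpha^2/2$ when $q = 2$, the forms \eqref{eq:df} and \eqref{eq:df1} simplify respectively to
\begin{equation*}
P\bigl[\exp\bigl(\lambda(f_{\alpha,N} - P[f]) - \tfrac{C\lambda^{2}}{2}\alpha^{2}\bigr)\bigr] \le 1 \quad \text{and} \quad P\bigl[\exp\bigl(\lambda(f_{\alpha,N} - P[f]) - \tfrac{C\lambda^{2}}{2}P[\alpha^{2}]\bigr)\bigr] \le 1,
\end{equation*}
for every $\lambda > 0$, $\alpha \in M^{+}(\R^{n})$ and $f \in \mathcal{C}_{b}$. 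The whole theorem will then follow from an appropriate identification of $f_{\alpha,N}$ with $g$.

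For the direction $\tilde T_{2,N}(C) \Rightarrow \eqref{eq:sg}$, I take $\lambda = 1$, $f = g$ and $\alpha(y) = \|\nabla g(y)\|$ for a smooth separately convex $g$. The key reduction is the identity $g_{\alpha,N}(y) = g(y)$, i.e.\ that the infimum $\inf_{x}\{\|\nabla g(y)\|\cdot\|x-y\| + g(x)\}$ is attained at $x = y$. For globally convex $g$, this is immediate from the subgradient inequality $g(x) \ge g(y) + \langle \nabla g(y), x-y\rangle$ combined with Cauchy--Schwarz. For separately convex $g$, the global subgradient inequality is not available, and one must instead proceed coordinate by coordinate, invoking the one-dimensional convexity of each restriction $x_{j} \mapsto g(y_{1},\ldots,x_{j},\ldots,y_{n})$ along each axis and then combining the one-dimensional subgradients via Cauchy--Schwarz on $\R^{n}$. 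With $g_{\alpha,N} = g$ secured, substitution into the first dual form yields \eqref{eq:sg}, and the converse implication follows by reversing the computation.

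For \eqref{eq:sg2}, the same strategy applies to the inverted dual form: for separately concave $g$ with $\alpha(y) = \|\nabla g(y)\|$, the sign-reversed variant of the coordinate-wise argument produces $g_{\alpha,N}(y) = g(y)$, and the dual form of $\tilde T_{2,N}^{(i)}(C)$ then gives the exponential bound with the deterministic variance $P[\|\nabla g\|^{2}]/2$.

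The main obstacle is the identification $g_{\alpha,N} = g$ when $g$ is only separately (and not globally) convex or concave. The example $g(x_{1},x_{2}) = -x_{1}x_{2}$ at the origin shows that the global subgradient bound does not follow pointwise from the one-dimensional restrictions, since $\nabla g(0) = 0$ while $g$ is non-constant. Closing the gap therefore requires genuinely exploiting the structure of the Euclidean inf-convolution, presumably through a coordinate-wise reduction of the infimum combined with an auxiliary Cauchy--Schwarz step, rather than a one-line application of the global subgradient inequality.
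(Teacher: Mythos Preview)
You correctly identify the specialization of the dual form for $p=q=2$, and your direct substitution $\alpha=\|\nabla g\|$ works cleanly for \emph{globally} convex $g$ via the subgradient inequality. You are also right that this substitution fails for merely separately convex $g$: your example $g(x_1,x_2)=-x_1x_2$ gives $\alpha(0)=0$ and hence $g_{\alpha,N}(0)=-\infty\neq g(0)$, so the identification $g_{\alpha,N}=g$ is simply false in that generality. Since you do not close this gap, the argument as written does not establish the theorem for separately convex functions in dimension $n>1$; nor does the vague ``reversing the computation'' establish the converse implication.

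The paper's route differs from yours in a way that sidesteps this obstacle. Rather than fixing a convex $g$ and choosing a matching $\alpha$, the paper works the other way: it treats $\alpha$ as a free parameter, computes the minimizer $x=x(y)$ in the inf-convolution $f_{\alpha,N}(y)=\inf_x\{\alpha(y)|x-y|+f(x)\}$ (allowing $\alpha$ to take either sign), and observes that the first-order condition gives $\alpha(y)^2=f'(x)^2$ with the constraint $f'(x)(y-x)+f(x)\le f(y)$, which forces $f$ to be convex. Because $\alpha$ is free, so is $x$; the paper then optimizes the exponent $g(y)-g'(x)(x-y)-g(x)-Cg'(x)^2/2$ (with $g=\lambda f$) over $x$ and finds that the extremal cases are either $x=y$, which yields \eqref{eq:sg} directly, or $g$ affine, which is a special case. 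This identification of the extremal class gives both directions of the equivalence at once. The paper carries this out only for $n=1$ and asserts that the extension to $n\ge 1$ ``follows the same reasoning''; in one dimension separately convex and convex coincide, so the distinction you worry about does not arise there. The missing ingredient in your proposal is precisely this parametrization by the minimizer rather than by $\alpha$: it is what allows the convexity class to emerge \emph{from} the structure of $f_{\alpha,N}$ rather than having to be verified by a subgradient bound that need not hold.
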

\begin{rmk}
The inequality \eqref{eq:sg} is called the Tsirels'on inequality who discovered it for independent Gaussian random variables with the optimal constant $C=1$. Corollary 6.1 in \cite{bobkov:gentil:ledoux:2001} states that it holds for any measures satisfying the log-Sobolev inequality. In particular, $\tilde T_{2,1_2}(C)$ holds for any log-concave measure $dP/dx=e^{-V}$ with $C$-strongly convex function $V$.
\end{rmk}
\begin{proof}
We consider only the case $n=1$ as the extension to $n\ge 1$ follows the same reasoning. First, we note that one can let $\alpha$ take non positive values in \eqref{eq:df} and \eqref{eq:df1}. A simple computation of the minimizer in the definition of $f_{\alpha,d}$ for sufficiently smooth $f$ gives the following identities: $f_{\alpha,d}(y)=f'(x)(y-x)+f(x)$ for $x$ satisfying $f'(x)^2=\alpha(y)^2$ or $f_{\alpha,d}(y)= f(y)$ if such $x$ does not exists. Thus, one can restrict us to the cases where $f'(x)(y-x)+f(x)\le f(y)$ and $f'(x)^2=\alpha(y)^2$. As $\alpha$ is a free parameter, it is also the case for $f'(x)$ and thus $x$. Thus we can restrict ourselves to convex function $f$ and we distinguish two cases: either $x=y$ or not. If $x$ varies, noticing that the dual form of the weak transport
inequality \eqref{eq:df} only depends on $x$ through 
$$
\lambda f_{\alpha,d}(y)-(\lambda \alpha(y))^2C/2=\lambda f'(x)(y-x)+\lambda f(x)-(\lambda f'(x))^2 C/2
$$
we derive by $x$ the function $g=\lambda f$ and we obtain  
$$
g''(x)(y-x-Cg'(x))=0.
$$
As $g$ is convex, the solution $Cg'(x)=y-x$ is excluded and thus worst $g$ are affine functions $g(x)=ax+b$ for some $a$, $b\in \R$. We obtain the condition
$$
P[\exp(a(X-P[X])-C a^2/2) ]\le 1,\qquad a\in \R,
$$
implied by \eqref{eq:sg}.
If $x=y$, then we also obtain
\eqref{eq:sg}. In any case \eqref{eq:sg} is a necessary and sufficient condition.

Now let us prove the equivalence for $T_{1,d}^{(i)}(C)$. We first notice that, when $n=1$, the dual form \eqref{eq:df1} is equivalent to
$$
\sup_{\lambda>0}\sup_{\alpha\in M^+(E)}\sup_{g\in \mathcal C_b}P[\exp(\lambda g  -P[\lambda g_{\alpha} +C\lambda^2 \alpha^2/2])] \le 1
$$
where $g_\alpha(y) =\sup_x \{g(x)-\alpha(y)|x-y|\}$. Following the same reasoning than above, one can consider only the cases where $g(x)-g'(x)(x-y)\ge g(y)$ and $g'(x)^2=\alpha(y)^2$. Denoting $\lambda g=f$, if one can let $x$ vary, we optimize the term 
$$
f(x)-f'(x)(x-y)+Cf'(x)^2/2
$$
by taking $f$ an affine function. Otherwise, $x=y$ and we obtain that any concave function $g$ satisfies
\eqref{eq:sg2}.
\end{proof}
We  relate weak transport inequalities to more classical notions of concentration. Recall that a measure $P$ on $E=\R^n$ is sub-gaussian if there exists $c>0$ such that
$$
P[\exp(\lambda \|X\|^2)]<\infty\quad\mbox{for}\quad0<\lambda<c.
$$
This property is equivalent to $T_{1,N}(C)$ for some $C>0$, see \cite{djellout:guillin:wu:2004,bolley:villani:2005}, and it is a very common assumption in statistics. We say that $P$ satisfies the {\it convex} Poincar\'e inequality if for any separately convex function $g$
$$
P[(g-P[g])^2]\le C P[\|\nabla g\|^2].
$$
\begin{rmk}
The convex Poincar\'e inequality on $E=\R$ has been studied in \cite{bobkov:gotze:1999a}. It is satisfied for $X$ standard normal or in $[0,1]$ with $C=1$. It also holds with the same constant $C$ for the product measure on $\R^n$, $n>1$. 
\end{rmk}
Notice that the convex Poincar\'e inequality is equivalent to a {\it concave} Poincar\'e inequality. We obtain
\begin{thm}
The weak transport inequality $T_{2,N}$ or $T_{2,N}^{(i)}$ implies sub-gaussianity and convex Poincar\'e inequality.\end{thm}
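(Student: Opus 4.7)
The plan is to exploit the dual forms of $\tilde T_{2,N}(C)$ and $\tilde T_{2,N}^{(i)}(C)$ established in Theorem \ref{th:tsi}.

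For sub-gaussianity from $\tilde T_{2,N}(C)$, I apply the Tsirelson dual \eqref{eq:sg} to the separately convex function $g(x)=\lambda\|x\|$, which has $\|\nabla g\|^2=\lambda^2$ almost everywhere. This produces
$$P[\exp(\lambda\|X\|)]\le\exp(\lambda P[\|X\|]+C\lambda^2/2),\qquad\lambda>0,$$
so $\|X\|$ has a sub-Gaussian upper tail $P(\|X\|>t)\le\exp(-(t-P[\|X\|])^2/(2C))$ for $t$ large by a Chernoff bound. Plugging this into the layer-cake identity $P[\exp(c\|X\|^2)]=1+\int_0^\infty 2ct\,e^{ct^2}P(\|X\|>t)\,dt$ shows the integral converges for $0<c<1/(2C)$, which is precisely the definition of sub-gaussianity. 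For $\tilde T_{2,N}^{(i)}(C)$, I invoke the Jensen reduction observed just after \eqref{iwtci}, which yields $T_{1,N}(C)=\tilde T_{1,N}(C)$; the latter is classically equivalent to sub-gaussianity by the results cited before the theorem (Djellout-Guillin-Wu, Bolley-Villani). Alternatively one can apply \eqref{eq:sg2} directly to any affine $g(x)=\langle a,x\rangle$ (both convex and concave) to recover $P[\exp(\langle a,X-P[X]\rangle)]\le\exp(C\|a\|^2/2)$ and conclude by the same tail argument.

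For the convex Poincar\'e inequality from $\tilde T_{2,N}(C)$, the key idea is a small-$\epsilon$ linearization of \eqref{eq:sg}: substituting $\epsilon g$ for $g$ (still separately convex) and Taylor-expanding the exponential to second order gives
$$1+\epsilon P[g-P[g]]+\frac{\epsilon^2}{2}P\bigl[(g-P[g])^2-C\|\nabla g\|^2\bigr]+o(\epsilon^2)\le 1.$$
The first-order term vanishes; dividing by $\epsilon^2/2$ and letting $\epsilon\to 0^+$ yields $\v(g)\le C\,P[\|\nabla g\|^2]$. The identical calculation applied to the concave dual \eqref{eq:sg2} produces the same bound for separately concave $g$, and via $g\mapsto -g$ this is equivalent to the convex Poincar\'e inequality, as noted in the remark preceding the theorem.

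The main technical obstacle is justifying the Taylor expansion when $g$ and $\|\nabla g\|$ are unbounded. I would first establish the inequality for bounded Lipschitz $g$, where dominated convergence applies directly because the integrand is uniformly bounded in $\epsilon$ by $\exp(|\epsilon|\|g\|_\infty+C\epsilon^2\|\nabla g\|_\infty^2/2)$, and then extend to general separately convex $g$ with $P[\|\nabla g\|^2]<\infty$ by the standard truncation $g_R=(g\wedge R)\vee(-R)$ together with a monotone-convergence argument, using the sub-gaussianity from the first step to secure the required uniform integrability.
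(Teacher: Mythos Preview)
Your proof is correct and follows the same route as the paper: apply the dual forms \eqref{eq:sg}/\eqref{eq:sg2} to affine test functions for sub-gaussianity (the paper simply takes $g(x)=\lambda x$ in dimension one, which is your ``alternative'' approach) and then linearize via $g\mapsto tg$, $t\to 0$, for the Poincar\'e inequality. One small caution on your final paragraph: the truncation $g_R=(g\wedge R)\vee(-R)$ is not separately convex, so it cannot be fed back into \eqref{eq:sg}; replace it by the inf-convolution $g_L(x)=\inf_y\{g(y)+L\|x-y\|\}$ (which is convex, $L$-Lipschitz, and increases to $g$ as $L\to\infty$), or simply note that the sub-gaussianity already established gives enough integrability to run dominated convergence on $\epsilon g$ directly.
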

\begin{rmk}
In a personal communication, N. Gozlan and P.-M. Samson showed me that the converse is not true using the counter-example given p.15 in \cite{gozlan:2012}.\end{rmk}
\begin{proof}
The arguments developed in this proof are classical, see \cite{ledoux:2005}. We detail the case of $T_{2,N}$ when $n=1$ because the proof for $n>1$ and $T_{2,N}^{(i)}$ follows the same reasoning.
Assume that $P$ satisfies $T_{2,d}(C)$ or $T_{2,d}^{(i)}(C)$ and apply \eqref{eq:sg}  to $g(x)=\lambda x$: $P[\exp(\lambda(X-P[X]))]\le \exp(C\lambda^2/2)$, $\lambda>0$. Then $P$ must be sub-gaussian. Now, applying \eqref{eq:sg} or \eqref{eq:sg2} to $tg$ with $t\to 0$ we obtain the convex Poincar\'e inequality in both cases.
\end{proof}
Tsirel'son inequality \eqref{eq:sg} quantifies the concentration of "self bounding" functions with respect to the euclidian norm, i.e. convex functions $f$ such that $\|\nabla f\|^2\le f$. Let $A$ be a measurable set of $\R^n$ and $B$ its convex hull, then $d_N(x,B)=\inf_{y\in B}\|x-y\|/4$ is a self bounding function. Following the same reasoning than in the proof of Proposition \ref{pr:tal}, we obtain the Euclidian version of Talagrand's concentration inequality \cite{maurey:1991}
\begin{prop}\label{pr:tal2}
If the law $P$ of $X=(X_1,\ldots,X_n)$ satisfies $\tilde T_{2,N}(C)$ and $\tilde T_{2,N}^{(i)}(C)$ then
$$
P\Big[\exp\Big(\frac{d_N^2(X,B)}{4C}\Big)\Big]\le \frac1{P(A)}.
$$
\end{prop}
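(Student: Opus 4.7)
The plan is to mirror the proof of Proposition \ref{pr:tal} in the Euclidean setting, replacing the Hamming distance $d_T$ by the Euclidean distance $d_N(x,B) = \inf_{y \in B}\|x-y\|$ to the convex hull $B$ of $A$, and using the Tsirel'son inequality \eqref{eq:sg} from Theorem \ref{th:tsi} in place of the dual form of Proposition \ref{prop:d1}.

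The first step is to identify the right function. Because $B$ is convex, $g(x) = d_N(x,B)$ is a convex, nonnegative, $1$-Lipschitz map, so $f = g^2$ is separately convex and satisfies the self-bounding estimate
\begin{equation*}
\|\nabla f\|^2 = 4 g^2 \|\nabla g\|^2 \le 4 f,
\end{equation*}
the exact Euclidean counterpart of the inequality $\sum_j \alpha_j^2 \le 4 d_T^2(x,A)$ derived from the optimal weights $c_j^*$ in the proof of Proposition \ref{pr:tal} (subgradients taking over where $g$ fails to be differentiable).

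The second step applies the Tsirel'son inequality \eqref{eq:sg} to $\lambda f$ for $\lambda > 0$. Since $\|\nabla(\lambda f)\|^2 \le 4\lambda^2 f$ pointwise, monotonicity of the exponential gives
\begin{equation*}
P\bigl[\exp\bigl((\lambda - 2C\lambda^2) f - \lambda P[f]\bigr)\bigr] \le 1.
\end{equation*}
Choosing $\lambda = 1/(2C)$ as in the proof of Proposition \ref{pr:tal} produces the exponential moment inequality
\begin{equation*}
P\Big[\exp\Big(\frac{d_N^2(X,B)}{4C}\Big)\Big] \le \exp\Big(\frac{P[d_N^2(X,B)]}{2C}\Big),
\end{equation*}
in direct analogy with the intermediate bound in the Hamming case.

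The third step bounds $P[d_N^2(X,B)]$ via the inverted weak transport $\tilde T_{2,N}^{(i)}(C)$. Let $Q$ be $P$ restricted to $A$ and normalized, so that $\mathcal K(Q|P) = -\log P(A)$, and pick the test function $\alpha(Y) = d_N(Y,B)$ in the dual definition of $\tilde W_{2,N}(Q,P)$. Under $Q$ the variable $X$ lies in $A \subset B$, hence $d_N(X,B) = 0$; the $1$-Lipschitz property of $d_N(\cdot,B)$ then yields $\|X - Y\| \ge d_N(Y,B)$ for every coupling, so that
\begin{equation*}
P[d_N^2(Y,B)] \le \inf_{\pi} \pi[d_N(Y,B)\|X - Y\|] \le P[d_N^2]^{1/2}\sqrt{-2C\log P(A)}.
\end{equation*}
Dividing by $P[d_N^2]^{1/2}$ gives $P[d_N^2] \le -2C\log P(A)$, hence $\exp(P[d_N^2]/(2C)) \le 1/P(A)$, and combining with the estimate from the second step proves the claim.

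The main obstacle is the joint calibration of the two steps: the self-bounding constraint $\|\nabla f\|^2 \le 4f$ limits the exponent extractable from Tsirel'son, and this exponent must match the strength of the entropy bound $\mathcal K(Q|P) = -\log P(A)$ delivered by the inverted transport so that the composition yields the sharp factor $1/P(A)$ rather than a weaker $P(A)^{-\rho}$ with $\rho<1$. The calibration $\lambda = 1/(2C)$ identified in the proof of Proposition \ref{pr:tal} is precisely what makes both pieces fit.
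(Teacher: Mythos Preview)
Your approach is exactly what the paper intends: it explicitly says to follow the proof of Proposition~\ref{pr:tal}, and you do---self-bounding of $d_N^2$, Tsirel'son for the exponential moment, then the inverted transport with $Q=P(\,\cdot\mid A)$ to close.

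There is, however, a genuine arithmetic slip in your second step (inherited verbatim from the paper's own proof of Proposition~\ref{pr:tal}). From \eqref{eq:sg} applied to $\lambda f$ with $\|\nabla f\|^2\le 4f$ you correctly obtain
\[
P\bigl[\exp\bigl((\lambda-2C\lambda^2)f-\lambda P[f]\bigr)\bigr]\le 1,
\]
but the choice $\lambda=1/(2C)$ makes the coefficient $\lambda-2C\lambda^2$ vanish, so the displayed conclusion $P[\exp(f/(4C))]\le\exp(P[f]/(2C))$ does \emph{not} follow. In fact $\lambda-2C\lambda^2$ is maximised at $\lambda=1/(4C)$ with value $1/(8C)$, which only yields $P[\exp(d_N^2/(8C))]\le\exp(P[d_N^2]/(4C))$; combined with your (correct) step~3 bound $P[d_N^2]\le -2C\log P(A)$ this gives $P[\exp(d_N^2/(8C))]\le P(A)^{-1/2}$, not the stated inequality. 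So either the constant in the proposition should read $8C$ (with $P(A)^{-1/2}$ on the right), or an additional argument beyond the direct substitution is needed to recover the sharper exponent $1/(4C)$. Your strategy is the paper's strategy; the gap is in the calibration of $\lambda$, and it is worth flagging that the same gap sits in the source proof you are mirroring.
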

\begin{rmk}
The result is proved for independent $X_j$s on $[0,1]$ or standard normal with the optimal constant $C=1$ in \cite{maurey:1991} via the convex  property ($\tau$). \end{rmk}

\subsection{Coupling trajectories}
Dual forms provided in Theorem \ref{th:E1} are particularly powerful to derive weak transport inequalities when $n=1$. 
In order to obtain concentration for measures on $E^n$, $n>1$, we prove that the weak transport inequalities hold for non product measures. To obtain constants as sharp as possible, we couple trajectories via the new notion of  $  \Gamma_{d,d'}(p)$-weak dependence.

To any law $P$ on $E^n$, add
artificially time $0$ and put $X_0 = Y_0 = x_0 =y_0$ for a fixed point $y_0\in E$. Denote $x^{(i)}=(x_i,\ldots,x_0)$ for $i\ge 0$ and $P_{|x^{(i)}}$ the conditional laws of $(X_{i+1},\ldots,X_n)$ given that $(X_i,\ldots,X_0)=x^{(i)}=(x_i,\ldots,x_0)$. Let $d$ and $d'$ be two lower semi-continuous distances on $E$ such that $d\le Md'$ for some $M>0$. 
Let us work under the following weak dependence assumption: 
\begin{dfn}
For any $1\le  p\le 2$, the probability measure $P$ is $\Gamma_{d,d'}(p)$-weakly dependent  if for any $1\le i\le n$, any $(x^{(i)},y_i)\in E^{i+2}$ there exists a coupling scheme $\pi_{|i}$ of $(P_{|x^{(i)}},P_{|x^{(i-1)},y_i})$ and coefficients $\gamma_{k,i}(p)\ge0$ such that 
\begin{equation}\label{eq:wd}
W_{p,d}(P_{x_k|x^{(i)}},P_{x_k|x^{(i-1)},y_i})\le \gamma_{k,i}(p)\,d'(x_i,y_i), \qquad \forall i<k\le n.
\end{equation}
\end{dfn}
Let us denote 
$$
\Gamma(p)=\begin{pmatrix}M&0&0&\ldots&0\\
\gamma_{2,1}(p)&M&0&\ldots&0\\
\gamma_{3,1}(p)&\gamma_{3,2}(p)&M&\ddots&\vdots\\
\vdots&\vdots&\ddots&1&0\\
\gamma_{n,1}(p)&\gamma_{n,2}(p)&\ldots&\gamma_{n,n-1}(p)&M
\end{pmatrix}.
$$
The matrix $
\Gamma(p)$ has $n$ rows and $n$ columns. We equip $\R^n$ with the $\ell^p$-norm and the set of the matrix of size $n\times n$ with the subordinated norm, both denoted $\|\cdot\|_p$ for any $1\le p\le \infty$.
\begin{thm}\label{th:wti}
For any $1\le p\le 2$, if $P$ is $\Gamma_{d,d'}(p)$-weakly dependent and $P_{x_j|x^{(j-1)}}$ satisfies $\tilde T_{p,d}(C)$ or $ \tilde T_{p,d}^{(i)}(C)$ for all $1\le j\le n$ then  $P$ satisfies  respectively,  $\tilde T_{p,d_p}(C\|\Gamma(p)\|_p^2n^{2/p-1})$ or  $\tilde T^{(i)}_{p,d_p}(C\|\Gamma(p)\|_p^2n^{2/p-1})$,.\end{thm}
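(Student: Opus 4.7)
My strategy is to build a Markov coupling between $P$ and $Q$ coordinate by coordinate, interpolating through the laws that share the first $j$ conditionals of $Q$ and the last $n-j$ conditionals of $P$. I then bound the per-step cost by combining the single-coordinate transport estimate $\tilde T_{p,d}(C)$ with the $\Gamma_{d,d'}(p)$-weak dependence coupling for the trailing trajectory, and finally aggregate using the chain rule for relative entropy and the operator norm of $\Gamma(p)$.

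\textbf{Step 1 (interpolating chain).} Define $R^{(j)}$, $0\le j\le n$, as the law on $E^n$ whose conditionals coincide with those of $Q$ for the first $j$ coordinates and with those of $P$ afterwards, so that $R^{(0)}=P$ and $R^{(n)}=Q$. Two consecutive members $R^{(j-1)}$ and $R^{(j)}$ agree on the marginal of the first $j-1$ coordinates (both coming from $Q$) and differ only through the conditional of $X_j$ given $X^{(j-1)}$ (one being $P_{x_j|x^{(j-1)}}$, the other $Q_{x_j|x^{(j-1)}}$); the tails are driven by the same $P$-kernel from whichever value $X_j$ (or $Y_j$) was drawn. Applying Lemma \ref{lem:wal} inductively, one gets
$$
\tilde W_{\alpha,d_p}(P,Q)\le\sum_{j=1}^n\tilde W_{\alpha^{(j)},d_p}\bigl(R^{(j-1)},R^{(j)}\bigr),
$$
where the $\alpha^{(j)}$ are obtained by successive conditional expectations along the chain and satisfy the Jensen bound $R^{(j-1)}[(\alpha^{(j)}_k)^q]\le Q[\alpha_k^q]$.

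\textbf{Step 2 (single-step Markov coupling).} For fixed $j$, I construct a Markov coupling $\bar\pi^{(j)}\in \tilde M(R^{(j-1)},R^{(j)})$ by: (i) the identity coupling on the first $j-1$ coordinates; (ii) an optimal coupling of $P_{x_j|x^{(j-1)}}$ with $Q_{x_j|x^{(j-1)}}$ realizing $\tilde T_{p,d}(C)$, where the driving weight is the conditional expectation of $\alpha^{(j)}_j(Y)$ given $(X^{(j-1)},Y_j)$; (iii) the weak dependence coupling $\pi_{|j}$ provided by $\Gamma_{d,d'}(p)$ to couple the $P$-trajectories emerging from $(X^{(j-1)},X_j)$ and $(X^{(j-1)},Y_j)$. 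The coordinate-$j$ cost is bounded, via $\tilde T_{p,d}(C)$ on the conditional and Jensen, by $M\,\|\alpha^{(j)}_j\|_{L^q(R^{(j)})}\sqrt{2C\,\mathcal K_j}$, where $\mathcal K_j$ denotes the conditional relative entropy $R^{(j-1)}[\mathcal K(Q_{x_j|X^{(j-1)}}\,|\,P_{x_j|X^{(j-1)}})]$ and the factor $M$ comes from $d\le Md'$. For $k>j$, Hölder together with \eqref{eq:wd} yields
$$
\bar\pi^{(j)}[\alpha^{(j)}_k(Y)\,d(X_k,Y_k)]\le \gamma_{k,j}(p)\,\|\alpha^{(j)}_k\|_{L^q(R^{(j)})}\,\sqrt{2C\,\mathcal K_j},
$$
since the optimal $\tilde T_{p,d}$ coupling at step $j$ also controls the auxiliary quantity $d'(X_j,Y_j)$ in $L^p$.

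\textbf{Step 3 (aggregation).} Summing over $k$, the cost at step $j$ is at most $\sqrt{2C\mathcal K_j}$ times the dot product of the $j$-th column of $\Gamma(p)$ with the vector $v^{(j)}:=(\|\alpha^{(j)}_k\|_{L^q(R^{(j)})})_{k=1}^n$. Summing now over $j$ and using the operator $\ell^p$-norm of $\Gamma(p)$ together with Hölder in $\ell^p/\ell^q$ on the $j$ index, the factor $n^{2/p-1}$ appears as the usual Hölder discrepancy between $\ell^p$ and $\ell^1$ (trivial for $p=2$). The chain rule $\mathcal K(Q|P)=\sum_j R^{(j-1)}[\mathcal K(Q_{x_j|X^{(j-1)}}\,|\,P_{x_j|X^{(j-1)}})]=\sum_j\mathcal K_j$ combines the $\sqrt{\mathcal K_j}$ contributions into $\sqrt{\mathcal K(Q|P)}$, and the Jensen bound $R^{(j-1)}[(\alpha^{(j)}_k)^q]\le Q[\alpha_k^q]$ converts the $L^q(R^{(j)})$ norms of the running weights into $L^q(Q)$ norms of the original $\alpha$. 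Taking the supremum over $\alpha$ produces $\tilde T_{p,d_p}(C\|\Gamma(p)\|_p^2 n^{2/p-1})$. The inverted version is obtained identically by running the interpolating chain from $Q$ back to $P$ and exchanging the roles of $P$ and $Q$ in each application of Lemma \ref{lem:wal} and in the conditional $\tilde T^{(i)}_{p,d}(C)$ step.

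\textbf{Main obstacle.} The bookkeeping of the successive weights $\alpha^{(j)}$ is the delicate part: one must verify that the Jensen control on $R^{(j-1)}[(\alpha^{(j)}_k)^q]$ propagates through the gluings of Lemma \ref{lem:wal} in a way that is compatible with the Markov coupling structure, and that, at the single-coordinate step, the same $\tilde T_{p,d}$-optimal coupling used for the $d$-cost at coordinate $j$ can be fed into the $d'$-based weak dependence inequality so as to produce precisely the $M$ on the diagonal of $\Gamma(p)$ and the off-diagonal entries $\gamma_{k,j}(p)$. Once that careful tracking is done, the remaining aggregation is a straightforward application of Cauchy--Schwarz/Hölder and the entropy chain rule.
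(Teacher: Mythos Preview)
Your overall architecture---the interpolating chain $R^{(j)}$, the per-step Markov coupling combining a single-coordinate transport estimate with the weak dependence coupling for the tail, and the final aggregation via the entropy chain rule and the $\ell^p$-operator norm of $\Gamma(p)$---matches the paper's proof. The gap is exactly where you flag it in your ``Main obstacle'': in Step~2 you choose the coordinate-$j$ coupling to optimize the $d$-cost driven by the weight $\alpha^{(j)}_j$ alone, and then assert that ``the optimal $\tilde T_{p,d}$ coupling at step $j$ also controls the auxiliary quantity $d'(X_j,Y_j)$ in $L^p$.'' That assertion is not justified: the hypothesis $d\le Md'$ goes the wrong way (it lets you pass from $d'$-control to $d$-control, not the reverse), and an optimal coupling for one weighted cost gives no a priori control on a different weighted cost. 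As written, your bound for $k>j$ requires an $L^p$ estimate of $d'(X_j,Y_j)$ under a coupling that was only selected to control $\alpha^{(j)}_j(Y)d(X_j,Y_j)$.

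The paper's fix is to reverse the order of operations at each step. \emph{Before} choosing the coupling at coordinate $j$, it first applies the weak dependence bound \eqref{eq:wdi} to the tail, turning the contribution of coordinates $k>j$ into $\sum_{k>j}Q[\alpha_k^q\mid Y^{(j)}]^{1/q}\gamma_{k,j}(p)\,d'(X_j,Y_j)$, and replaces the coordinate-$j$ term by $\gamma_{j,j}\,d'(X_j,Y_j)$ via $d\le Md'=\gamma_{j,j}d'$. This collapses everything at step $j$ into a \emph{single} weighted $d'$-cost $\sum_{k\ge j}(\cdot)\gamma_{k,j}(p)\,d'(X_j,Y_j)$, and only then is the one-coordinate coupling $\tilde\pi_{j|j-1}$ chosen as the optimal one for that combined cost (equation \eqref{eq:tpi}), whose value is bounded by $\tilde W_{p,d'}(P_{x_j|y^{(j-1)}},Q_{y_j|y^{(j-1)}})$ times the $\ell^q$ norm of the full weight vector. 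The single-coordinate weak transport inequality is then applied to this $\tilde W_{p,d'}$. (Indeed, the paper's proof uses $d'$ here even though the theorem is stated with $\tilde T_{p,d}$; the examples in Section~\ref{sec:ex} all assume the conditional satisfies $\tilde T_{p,d'}$.) Once you make this reordering---reduce everything at step $j$ to a single weighted $d'$-cost before selecting the coupling---your Step~3 aggregation goes through as you describe.
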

\begin{rmk}\label{rk:st}When the process $(X_t)$ is stationary, we have $\gamma_{i,j}(p)=\gamma_{k,\ell}(p)$ for $j-i=k-\ell$. From the basic inequality $\|A\|_p\le \|A\|_1^{1/p}\|A\|_\infty^{1-1/p}$ and the fact that $\|\Gamma\|_1=\|\Gamma\|_\infty=M+\sum_{i=1}^n\gamma_{i,0}(p)$, then $\|A\|_p\le M+\sum_{i=1}^n\gamma_{i,0}(p)$.
\end{rmk}
\begin{proof}
The proofs of the two assertions are similar as the weak dependence condition \eqref{eq:wd} is symmetric in $x_i$ and $y_i$. Thus the proof of the second assertion is omitted.

Let us fix  $\alpha \in M^+(E^n)$ such that $Q[\alpha_j^q]<\infty$ for all $1\le j\le n$. As  preliminaries, we recall the following result of existence of the optimal Markov coupling due to from R\"uschendorf, \cite{ruschendorf:1985} and a simple and useful consequence of this result stated in Lemma \ref{prop:tens}.

Let $\sigma:E^n\times E^n\mapsto \R_+$ and the section of $\sigma$ in $(x_1,y_1)\in E^2$ as
$$
\sigma_{x_1,y_1}(x_2,y_2)=\sigma ((x_1,x_2),(y_1,y_2)).
$$
\begin{thm}[Theorem 3 in  \cite{ruschendorf:1985}]\label{th:ru}
We have the equivalence between  \begin{enumerate}
\item $\inf_{\pi\in \tilde M}\pi[\sigma]= \pi^\ast[\sigma]$ with $\pi^\ast \in \tilde M$,
\item \begin{enumerate}
\item $h(x,y):=\inf_{\pi_{2|1}}\pi[\sigma_{x,y}]= \pi^\ast_{2|1}[\sigma_{x,y}|(x,y)]$ is finite $\pi_1-$a.s. and
\item $\inf_{\pi_1}\pi_1[h]=\pi_1^\ast[h]<\infty$.
\end{enumerate}
\end{enumerate}
\end{thm}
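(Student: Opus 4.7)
The plan is to view the claim as a Bellman / dynamic-programming principle: the Markov factorization $\pi=\pi_1\pi_{2|1}$ combined with Fubini reduces the global minimization over $\tilde M(P,Q)$ to nested minimizations, first over conditional couplings for each frozen $(x,y)\in E^2$, then over outer couplings of the first-coordinate marginals. With $h(x,y)=\inf_{\pi_{2|1}}\pi[\sigma_{x,y}]$ as in the statement, the basic identity
$$
\pi[\sigma]\;=\;\int \pi_{2|1}[\sigma_{x,y}\,|\,(x,y)]\, d\pi_1(x,y)\;\geq\;\int h(x,y)\, d\pi_1(x,y)
$$
holds for every Markov coupling, with equality if and only if $\pi_{2|1}$ attains the inner infimum $\pi_1$-almost surely. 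The two implications then correspond to whether $\pi^\ast[\sigma]$ splits into the pointwise inner optimum plus an outer optimum.

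For (2)$\Rightarrow$(1) I would start from the pointwise family of inner optimizers provided by (2)(a) and apply a measurable-selection theorem (Kuratowski--Ryll-Nardzewski, or the classical selection for lower semi-continuous costs on the weakly compact sets of couplings of Polish marginals) to realize $(x,y)\mapsto \pi^\ast_{2|1}$ as a stochastic kernel. By construction each section is a coupling of $(P_{2|1=x},Q_{2|1=y})$, and $\pi_1^\ast$ from (2)(b) is a coupling of $(P_1,Q_1)$; hence the paste $\pi^\ast=\pi_1^\ast\cdot\pi^\ast_{2|1}$ lies in $\tilde M(P,Q)$. Fubini gives $\pi^\ast[\sigma]=\pi_1^\ast[h]$, and combining with the displayed inequality applied to an arbitrary $\pi=\pi_1\pi_{2|1}\in\tilde M(P,Q)$ yields $\pi[\sigma]\geq\pi_1[h]\geq\pi_1^\ast[h]=\pi^\ast[\sigma]$, which is the required optimality.

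For (1)$\Rightarrow$(2) I would disintegrate the given optimal $\pi^\ast=\pi_1^\ast\pi^\ast_{2|1}$ and set $h^\ast(x,y):=\pi^\ast_{2|1}[\sigma_{x,y}\,|\,(x,y)]$. Fubini shows $\pi_1^\ast[h^\ast]=\pi^\ast[\sigma]<\infty$, so $h^\ast<\infty$ holds $\pi_1^\ast$-a.s. To obtain (2)(a) it suffices to check that $h^\ast=h$ on a set of full $\pi_1^\ast$-measure: otherwise $h^\ast>h$ on a Borel set $A$ with $\pi_1^\ast(A)>0$, and a measurable selection of strictly better conditional couplings on $A$ glued with $\pi^\ast_{2|1}$ on $A^c$ would produce a Markov coupling of cost strictly less than $\pi^\ast[\sigma]$, contradicting (1). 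Once $h=h^\ast$ $\pi_1^\ast$-a.s., the equality $\pi^\ast[\sigma]=\pi_1^\ast[h]$ is automatic; (2)(b) then follows because any $\pi_1$ with $\pi_1[h]<\pi_1^\ast[h]$, pasted with the measurable kernel of pointwise optimizers, would yield a Markov coupling strictly improving $\pi^\ast$, again contradicting (1).

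The main obstacle is the measurability of the inner value function $h$ and the construction of a measurable selector $(x,y)\mapsto\pi^\ast_{2|1}$ from the set-valued map that assigns to $(x,y)$ the family of optimal conditional couplings of $(P_{2|1=x},Q_{2|1=y})$. Under lower semi-continuity of $\sigma$ and the Polish assumption on $E$, each fiber problem is a standard Kantorovich problem with attained infimum, and measurability follows from classical selection and disintegration theorems; this is the only step that is not a direct Fubini/replacement argument, and it is the technical heart of Rüschendorf's original proof.
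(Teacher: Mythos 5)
The paper does not prove this statement at all: it is imported verbatim as Theorem 3 of R\"uschendorf (1985) and used as a black box inside the proof of Theorem \ref{th:wti}, so there is no in-paper argument to compare yours against. Your dynamic-programming reading is the right one and matches the structure of R\"uschendorf's original disintegration argument: the pointwise identity $\pi[\sigma]=\int \pi_{2|1}[\sigma_{x,y}\mid (x,y)]\,d\pi_1(x,y)\ge \pi_1[h]$, with equality exactly when the conditional kernel is $\pi_1$-a.s.\ optimal, is precisely the mechanism that makes the equivalence work, and both of your implications are organized correctly around it. The one place where your write-up is a sketch rather than a proof is the measurability layer, and you correctly identify it as such: you need (i) that $h$ is (at least universally) measurable so that $\pi_1[h]$ is defined, which follows from lower semi-continuity of $\sigma$ and measurability of $(x,y)\mapsto(P_{2|1=x},Q_{2|1=y})$ into the Polish space of measures, and (ii) in the $(1)\Rightarrow(2)$ direction, a measurable selection of \emph{$\varepsilon$-optimal} (not optimal) conditional couplings on the set where $h^\ast>h$, since at that stage you cannot yet assume the inner infimum is attained; this requires a Jankov--von Neumann type selection from the sublevel sets rather than the Kuratowski--Ryll-Nardzewski selector for the argmax correspondence. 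With that refinement your contradiction argument goes through, and the proposal is a faithful reconstruction of the cited result.
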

Denote $\alpha_j^{(i)}$ denotes the section of $\alpha_j$ in $y^{(i)}$ as 
$
\alpha_j^{(i)}(y_{i+1},\ldots,y_n)=\alpha_j(y)
$ and $\alpha^{(i)}=(\alpha_j^{(i)})_{j>i}$.
A simple corollary of this Theorem is the following result:
\begin{lemma}\label{prop:tens}
Let $P$, $Q\in M(E^n)$ be decomposed as $P=P_{1}P_{|X_1}$ and $Q=Q_{1}Q_{|Y_1}$ for $P_{ 1}$, $Q_{ 1}\in M(E)$ and $P_{ |x_1}$, $Q_{|y_1}\in M(E^{n-1})$. Then for any $\alpha\in M^+(E^n)$ and any coupling $\pi_1\in \tilde M(P_1,Q_1)$ we have
\begin{equation}\label{eq:tens}
\tilde W_{\alpha,d}(P,Q)\le   \pi_1[Q_{|Y_1}[\alpha_1|Y_1]d(X_1,Y_1)+\tilde W_{\alpha^{(1)},d}(P_{|X_1},Q_{|Y_1})].
\end{equation}
\end{lemma}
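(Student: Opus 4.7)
The plan is to exhibit a Markov coupling of $P$ and $Q$ built from the given $\pi_1$ and (near-)optimal conditional couplings of the one-step-ahead conditional laws, and then to evaluate the cost $\sigma(x,y)=\sum_{j=1}^n\alpha_j(y)d(x_j,y_j)$ against this coupling. Specifically, for each pair $(x_1,y_1)\in E^2$, I would pick $\pi^\ast_{2|1,x_1,y_1}\in \tilde M(P_{|x_1},Q_{|y_1})$ achieving (or $\varepsilon$-achieving) the inner weak transport cost $\tilde W_{\alpha^{(1)},d}(P_{|x_1},Q_{|y_1})$, where $\alpha^{(1)}=(\alpha_j^{(1)})_{j>1}$ is the section at $y_1$ as defined in the paper. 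Setting $\pi=\pi_1\pi^\ast_{2|1}$ gives a coupling in $\tilde M(P,Q)$, so $\tilde W_{\alpha,d}(P,Q)\le \pi[\sigma]$.

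Next I would decompose $\pi[\sigma]$ by conditioning on $(X_1,Y_1)$:
\begin{align*}
\pi[\sigma] &= \pi_1\!\left[d(X_1,Y_1)\,\pi^\ast_{2|1}[\alpha_1(Y_1,Y_2,\ldots,Y_n)]\right] \\
&\quad + \pi_1\!\left[\pi^\ast_{2|1}\!\left[\sum_{j=2}^n\alpha_j(Y)d(X_j,Y_j)\right]\right].
\end{align*}
In the first integrand, $d(X_1,Y_1)$ is constant under $\pi^\ast_{2|1,X_1,Y_1}$, and $\alpha_1$ depends only on the $Y$-variables; since the $Y$-marginal of $\pi^\ast_{2|1,X_1,Y_1}$ is by construction $Q_{|Y_1}$, the inner integral equals $Q_{|Y_1}[\alpha_1\mid Y_1]$. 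In the second integrand, the sum running from $j=2$ uses only $\alpha^{(1)}$, so the $(\varepsilon$-$)$optimality of $\pi^\ast_{2|1,X_1,Y_1}$ identifies the inner expectation with $\tilde W_{\alpha^{(1)},d}(P_{|X_1},Q_{|Y_1})$ (up to $\varepsilon$, which one then sends to $0$).

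Combining these two identifications and taking the infimum over $\pi^\ast_{2|1}$ yields
\[
\tilde W_{\alpha,d}(P,Q)\le \pi_1\!\left[Q_{|Y_1}[\alpha_1\mid Y_1]\,d(X_1,Y_1) + \tilde W_{\alpha^{(1)},d}(P_{|X_1},Q_{|Y_1})\right],
\]
which is \eqref{eq:tens}.

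The main obstacle is a measurable-selection issue: to write $\pi=\pi_1\pi^\ast_{2|1}$ as a genuine probability measure one needs $(x_1,y_1)\mapsto \pi^\ast_{2|1,x_1,y_1}$ to be measurable, and to integrate the right-hand side against $\pi_1$ one needs $(x_1,y_1)\mapsto \tilde W_{\alpha^{(1)},d}(P_{|x_1},Q_{|y_1})$ to be measurable. Both are handled by Rüschendorf's Theorem~\ref{th:ru} applied with cost $\sigma_{x_1,y_1}$: condition~2(a) provides precisely the measurable selection of conditionally optimal couplings and the finiteness/measurability of the function $h(x_1,y_1)$ whose $\pi_1$-integral dominates $\tilde W_{\alpha,d}(P,Q)$. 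Everything else is bookkeeping with Fubini and the definition of the sectioned cost.
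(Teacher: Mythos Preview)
Your proposal is correct and follows essentially the same route as the paper: build the Markov coupling $\pi=\pi_1\pi^\ast_{2|1}$ from (near-)optimal conditional couplings, split the cost into the $j=1$ term and the tail, identify the first with $Q_{|Y_1}[\alpha_1\mid Y_1]d(X_1,Y_1)$ via the $Y$-marginal, the second with $\tilde W_{\alpha^{(1)},d}(P_{|X_1},Q_{|Y_1})$ by optimality, and invoke R\"uschendorf's Theorem~\ref{th:ru} for the measurable selection. The only cosmetic difference is that the paper appeals directly to lower semi-continuity to secure exact minimizers rather than passing through an $\varepsilon$-approximation.
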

\begin{proof}
Let us assume that for almost all $x_1$, $y_1\in E$ we have $\tilde W_{\alpha^{(1)},d}(P_{|x_1},Q_{|y_1})<\infty$. Then, by lower semi-continuity, it exists $\pi^\ast_{|x_1,y_1}$   such that:
$$
\pi^\ast_{|x_1,y_1}\Big[\sum_{j=2}^n\alpha_j^{(1)}d(X_j,Y_j)\Big]=\tilde W_{\alpha^{(1)},d}(P_{ |x_1},Q_{ |y_1})].
$$
Thus the desired result follows from Theorem \ref{th:ru} remarking that for any $x_1$, $y_1\in E$ we have 
$$\pi^\ast_{|x_1,y_1}[\alpha_1^{(1)}d(x_1,y_1)]=\pi^\ast_{|x_1,y_1}[\alpha_1^{(1)}|x_1,y_1]d(x_1,y_1)=Q_{|y_1}[\alpha_1|y_1]d(x_1,y_1)$$ by definition of Markov couplings.
\end{proof}
Let us consider now the following coupling scheme denoted $\tilde \pi$  defined recursively as $\tilde \pi=\tilde \pi_{n|n-1}\cdots\tilde \pi_{2|1}\tilde \pi_{1|0}$ $\in\tilde M(E^n)$ where $\tilde \pi_{j|j-1}=\tilde \pi_{x_j,y_j|x^{(j-1)},y^{(j-1)}}$  is determined such that
\begin{multline}\label{eq:tpi}
\tilde \pi_{j|j-1}\Big[\sum_{k=j}^nQ_{|Y_j,y^{(j-1)}}[\alpha_k^q|Y_j,y^{(j-1)}]^{1/q}\gamma_{k,j}(p)d'(X_j,Y_j)\Big]  \\= \Big(\sum_{k=j}^nQ_{|y^{(j-1)}}[\alpha_k^q| y^{(j-1)}]\gamma_{k,j}(p)^q\Big)^{1/q}
\tilde W_{p,d'}(P_{x_j|x^{(j-1)}},Q_{y_j|y^{(j-1)}})
\end{multline}
for all $x^{(i-1)},y^{(i-1)}$ in $E^{i-1}$.

We are now ready to prove the result iterating several times the same reasoning. Let us detail the case $j=1$ when considering probabilities conditional on $y_0$. Applying \eqref{eq:tens} and \eqref{eq:wal} we have
\begin{eqnarray}
\nonumber \tilde W_{\alpha,d}(P,Q)&\le& \tilde \pi_{1|y^{(0)}}[Q_{|Y_1,y^{(0)}}[\alpha_1|Y_1,y^{(0)}]d(X_1,Y_1)+\tilde W_{\alpha^{(1)},d}(P_{ |X_1,y^{(0)}},Q_{ |Y_1,y^{(0)}})]\\
\label{cb1}&\le& \tilde \pi_{1|y^{(0)}}[Q_{|Y_1,y^{(0)}}[\alpha_1|Y_1,y^{(0)}]d(X_1,Y_1)+\tilde W_{\alpha^{(1)},d}(P_{|Y_1,y^{(0)}},Q_{|Y_1,y^{(0)}})\\
\nonumber &&+\tilde W_{\tilde \alpha^{(1)},d}(P_{|X_1,y^{(0)}},P_{|Y_1,y^{(0)}})].
\end{eqnarray}
To bound the last term, we  use the definition of the $\Gamma_{d,d'}(p)$-weak dependence:
\begin{lemma}
For any $\alpha_k\in M^+(E)$ for all $j<k\le n$ and any $\Gamma_{d,d'}(p)$-weakly dependent probability measure $P$ we have 
\begin{equation}\label{eq:wdi}
\tilde W_{\alpha^{(j)},d}(P_{|x_j,y^{(j-1)}},P_{|y^{(j)}})\le \sum_{k=j+1}^nQ_{|y^{(j)}}[\alpha^q_k|y^{(j)}]^{1/q}\gamma_{k,j}(p)d'(x_j,y_j)
\end{equation}
\end{lemma}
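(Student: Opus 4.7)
The plan is to use the coupling provided by the $\Gamma_{d,d'}(p)$-weak dependence as a feasible (generically non-optimal) competitor in the infimum defining $\tilde W_{\alpha^{(j)},d}$, and then to split the integrand by H\"older's inequality.

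First I would invoke the definition of $\Gamma_{d,d'}(p)$-weak dependence with index $i=j$ and the identification $(x^{(i-1)},x_i,y_i)\leftrightarrow(y^{(j-1)},x_j,y_j)$. This produces a coupling scheme $\pi_{|j}$ of $(P_{|x_j,y^{(j-1)}},P_{|y^{(j)}})$ satisfying
$$
\pi_{|j}\bigl[d(X_k,Y_k)^p\bigr]^{1/p}\le \gamma_{k,j}(p)\,d'(x_j,y_j)\qquad \text{for all }j<k\le n.
$$
Plugging $\pi_{|j}$ into the infimum \eqref{eq:walpha} immediately yields
$$
\tilde W_{\alpha^{(j)},d}(P_{|x_j,y^{(j-1)}},P_{|y^{(j)}})\le \sum_{k=j+1}^n \pi_{|j}\bigl[\alpha_k^{(j)}(Y)\,d(X_k,Y_k)\bigr].
$$

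Next I would apply H\"older's inequality with conjugate exponents $q$ and $p$ termwise:
$$
\pi_{|j}\bigl[\alpha_k^{(j)}(Y)\,d(X_k,Y_k)\bigr]\le \pi_{|j}\bigl[\alpha_k^{(j)}(Y)^q\bigr]^{1/q}\,\pi_{|j}\bigl[d(X_k,Y_k)^p\bigr]^{1/p}.
$$
The second factor is at most $\gamma_{k,j}(p)\,d'(x_j,y_j)$ by the weak dependence bound above. Since the $Y$-marginal of $\pi_{|j}$ is $P_{|y^{(j)}}$ by construction and $\alpha_k\in M^+(E)$ acts on the single coordinate $Y_k$, the first factor equals the conditional moment $P_{|y^{(j)}}[\alpha_k^q\mid y^{(j)}]^{1/q}$. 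Summing over $k\in\{j+1,\dots,n\}$ then produces the claimed inequality with $P$ in place of $Q$.

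The only subtle point is the appearance of $Q$ rather than $P$ on the right-hand side of the statement. The coupling constructed above couples two $P$-conditional laws, so only a $P$-conditional moment appears naturally from this step; the stated $Q$-moment is recovered one level up, inside the inductive proof of Theorem \ref{th:wti}, where the lemma is applied not to the original weights $\alpha$ but to the auxiliary weights $\tilde\alpha$ supplied by Lemma \ref{lem:wal}. Because $\tilde\alpha$ is defined precisely so that Jensen's inequality dominates its $P$-moments by the corresponding $Q$-moments of $\alpha$, chaining this substitution with the $P$-bound proved here recasts the inequality in the $Q$-form of the statement.
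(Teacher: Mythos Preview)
Your argument is essentially the paper's own proof: pick the coupling $\pi_{|j}$ supplied by $\Gamma_{d,d'}(p)$-weak dependence as a competitor in the infimum defining $\tilde W_{\alpha^{(j)},d}$, split each summand by H\"older, identify the $\alpha_k^q$ factor through the $Y$-marginal of $\pi_{|j}$, and bound the $d^p$ factor by $\gamma_{k,j}(p)\,d'(x_j,y_j)$. Your observation about $P$ versus $Q$ on the right-hand side is well taken: the $Y$-marginal here is $P_{|y^{(j)}}$, and the $Q$-form used in Theorem~\ref{th:wti} is obtained exactly as you describe, via the auxiliary weights $\tilde\alpha$ from Lemma~\ref{lem:wal} and Jensen's inequality.
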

\begin{proof}
Assume that $Q[\alpha^q_k]<\infty$ for $j<k\le n$. Then, applying the Holder inequality and the definition of Markov couplings, we have
\begin{eqnarray*}
\tilde W_{\alpha^{(j)},d}(P_{|x_j,y^{(j-1)}},P_{|y^{(j)}})&=&\inf_{\pi_{|j}}\pi_{|j}\Big[\sum_{k=j+1}^n\alpha_kd(X_k,Y_k)\Big]\\
&\le& \inf_{\pi_{|j}} \sum_{k=j+1}^n\pi_{|j}[\alpha_k^q|y^{(j)}]^{1/q}\pi_{|j}[d^p(X_k,Y_k)]^{1/p}\\
&\le&\inf_{\pi_{|j}}  \sum_{k=j+1}^nQ_{|y^{(j)}}[\alpha_k^q|y^{(j)}]^{1/q} \pi_{|j}[d^p(X_k,Y_k)]^{1/p}\\
&\le& \sum_{k=j+1}^nQ_{|y^{(j)}}[\alpha_k^q|y^{(j)}]^{1/q}\gamma_{k,j}(p)d'(x_j,y_j)
\end{eqnarray*}
because the $ \Gamma(p)$-weak dependence condition ensures the existence of a coupling scheme satisfying
$$
 \pi_{|j}[d^p(X_k,Y_k)]^{1/p}\le \gamma_{k,j}(p)d'(x_j,y_j)\qquad \forall\, j<k\le n.
$$
\end{proof}
Denoting $\gamma_{i,i}=M$ for all $1\le i\le n$, note that by assumption we have  the relation$d(X_i,Y_i)\le \gamma_{i,i}d'(X_i,Y_i)$.
Collecting the bounds \eqref{cb1} and \eqref{eq:wdi}  we obtain
\begin{multline*}
\tilde W_{\alpha,d}(P_{|y^{(0)}},Q_{|y^{(0)}})\le \tilde \pi_{1|y^{(0)}}\Big[ \sum_{k= 1}^nQ_{|Y^{(1)},y_0}[\alpha^q_k|Y^{(1)},y_0]^{1/q}\gamma_{k,1}d'(X_1,Y_1)\\
+\tilde W_{\alpha^{(1)},d}(P_{|Y_1,y^{(0)}},Q_{|Y_1,y^{(0)}})].
\end{multline*}
Let us do the same reasoning than above for any $1\le j\le n$ conditional on $y^{(j)}$  on $\tilde W_{\alpha^{(j-1)},d}(P_{|y^{(j)}},Q_{|y^{(j)}})$. For any $1\le j\le n$, we obtain:
\begin{multline*}
\tilde W_{\alpha^{(j-1)},d}(P_{|y^{(j-1)}},Q_{|y^{(j-1)}})\le \tilde \pi_{j|y^{(j-1)}}\Big[ \sum_{k= j}^nQ[\alpha^q_k|Y_j,y^{(j-1)}]^{1/q}\gamma_{k,j}(p)d'(X_j,Y_j)\\
+\tilde W_{\alpha^{(j)},d}(P_{|Y_j,y^{(j-1)}},Q_{|Y_j,y^{(j-1)}})].
\end{multline*}
For the specific Markov coupling  considered here, the identity \eqref{eq:tpi} holds and
\begin{align*}
\tilde W_{\alpha^{(j-1)},d}&(P_{|y^{(j-1)}},Q_{|y^{(j-1)}})\\
\le & \Big(\sum_{k=j}^nQ_{| y^{(j-1)}}[\alpha_k^q| y^{(j-1)}]\gamma_{k,j}(p)^q\Big)^{1/q}
\tilde W_{p,d'}(P_{x_j|y^{(j-1)}},Q_{y_j|y^{(j-1)}})\\
&+\tilde \pi_{j|y^{(j-1)}}[\tilde W_{\alpha^{(j)},d}(P_{|Y_j,y^{(j-1)}},Q_{|Y_j,y^{(j-1)}})]\\
\le &  \sum_{k=j}^nQ_{| y^{(j-1)}}[\alpha_k^q| y^{(j-1)}]^{1/q}\gamma_{k,j}(p) 
\tilde W_{p,d'}(P_{ x_j|y^{(j-1)}},Q_{y_j|y^{(j-1)}})\\
&+\tilde \pi_{j|y^{(j-1)}}[\tilde W_{\alpha^{(j)},d'}(P_{|Y_j,y^{(j-1)}},Q_{|Y_j,y^{(j-1)}})]
\end{align*}
where the last inequality follows from  the concavity of $x\to x^{1/q}$ and Jensen's inequality.
Applying an inductive argument, we obtain
\begin{eqnarray*}
\tilde W_{\alpha,d}(P,Q)&\le&  Q\Big[\sum_{j=1}^n\sum_{k=j}^nQ[\alpha_k^q| Y^{(j-1)}]^{1/q}\gamma_{k,j}(p)
\tilde W_{p,d'}(P_{x_j|Y^{(j-1)}},Q_{y_j|Y^{(j-1)}})\Big]\\
&\le &  \sum_{j=1}^n\sum_{k=j}^nQ[\alpha_k^q ]^{1/q}\gamma_{k,j}(p)
Q[\tilde W_{p,d'}(P_{x_j|Y^{(j-1)}},Q_{y_j|Y^{(j-1)}})^p]^{1/p}\\
&\le &  \sum_{j=1}^n\sum_{k=j}^nQ[\alpha_k^q ]^{1/q}\gamma_{k,j}(p)
Q[2C\mathcal K(Q_{y_j|Y^{(j-1)}}|P_{x_j|Y^{(j-1)}})^{p/2}]^{1/p}
\end{eqnarray*}
the second inequality following from H\"older's and Jensen's inequalities and the last one from the assumption $P_{x_j|y^{(j-1)}}\in\tilde T_{p,d}(C)$.
Let us denote ${\bf Q}$ the row vector $(Q[\alpha_k^q]^{1/q})_{1\le k\le n}$ and ${\bf W}$ the column vector $(Q[2C\mathcal K(P_{x_j|Y^{(j-1)}}|Q_{y_j|Y^{(j-1)}})^{p/2}]^{1/p})'_{1\le j\le n}$.
With $<;>$ denoting the scalar product, we obtain
$$
\tilde W_{\alpha,d}(P,Q)\le <{\bf Q};\Gamma(p) {\bf W}>\le \|{\bf Q}\|_{q}\|\Gamma(p)\|_p\|{\bf W}\|_p.
$$
Note that we have the identities
\begin{eqnarray*}
\|{\bf Q}\|_{q}&=&\Big(\sum_{j=1}^nQ[\alpha_k^q ]\Big)^{1/q},\\
\|{\bf W}\|_{p}&=&\Big(\sum_{j=1}^nQ[2C\mathcal K(Q_{y_j|Y^{(j-1)}}|P_{x_j|Y^{(j-1)}})^{p/2}]\Big)^{1/p},\\ 
\mathcal K(Q|P)&=&\sum_{j=1}^nQ[2C\mathcal K(Q_{y_j|Y^{(j-1)}}|P_{x_j|Y^{(j-1)}})].\end{eqnarray*}
As $p/2\le 1$, successive applications of Jensen's   and H\"older's inequalities yield
\begin{eqnarray*}
\|{\bf W}\|_{p}&\le& \Big(\sum_{j=1}^nQ[2C\mathcal K(Q_{y_j|Y^{(j-1)}}|P_{x_j|Y^{(j-1)}})]^{p/2}\Big)^{1/p}\\
&\le&n^{1/p-1/2}\Big(\sum_{j=1}^nQ[2C\mathcal K(Q_{y_j|Y^{(j-1)}}|P_{x_j|Y^{(j-1)}})] \Big)^{1/2}\\
&\le&\sqrt{n^{2/p-1}2C\mathcal K(Q|P)}.\end{eqnarray*}
Finally, we obtain
$$
\frac{\sum_{j=1}^n\tilde \pi [\alpha_j(Y)d(X_j,Y_j)]}{(\sum_{j=1}^nQ[\alpha_j^q ])^{1/q}}\le \sqrt{2C\|\Gamma(p)\|_p^2 n^{2/p-1}\mathcal K(Q|P)}.
$$
The desired result follows by taking the supremum over all   $\alpha\in M^+(E^n)$.
\end{proof}

\section{Examples of $\Gamma_{d,d'}(p)$-weakly dependent processes}\label{sec:ex}
\subsection{$\Gamma_{d,d'}(1)$-weakly dependent examples}
When $p=1$, the dual form of $\tilde T_{1,d_1}(Cn)=\tilde T^{i}_{1,d_1}(Cn)=T_{1,d_1}(Cn)$ is the Hoeffding inequality (which is not dimension free). We then recover concentration results that have been proved using the bounded difference approach of \cite{mcdiarmid:1989}.  Applying   the Kantorovitch-Rubinstein inequality, we obtain an explicit expression of $\gamma_{k,i}(1)$:
\begin{multline*}
\sum_{k=i+1}^n \gamma_{k,i}(1)\\
=
\sup_{f\in{\tiny \rm Lip}_1(d_1)}\sup_{x^{(i)}, y_i}\frac{P[f(X_{i+1},\ldots,X_n)|x^{(i)}]-P[f(X_{i+1},\ldots,X_n)|y_i,x^{(i-1)}]|}{d'(x_i,y_i)}.
\end{multline*}
In the bounded case $d\le M$ and $d'=1$, the $\Gamma_{d,1}$(1)-weak dependence condition coincides with the one introduces in Rio \cite{rio:2000}. As the conditional probabilities $P_{x_j|x^{(j-1)}}$ automatically satisfy Pinsker's inequality $\tilde T_{1,1}(1/4)$, Theorem \ref{th:wti} recovers the Hoeffding inequality of \cite{rio:2000}.
The context of $\Gamma_{N,1}(1)$-weak dependence is extensive and we refer the reader to Section 7 of \cite{dedecker:prieur:2005} for a detailed study of many examples in this case, including causal functions of stationary sequences, iterated random 
functions, Markov kernels and expanding maps.

When $d=d'$ the $\Gamma_{d,d}$-weak dependence condition coincides with the condition $(C_1)'$  of \cite{djellout:guillin:wu:2004}: for any $f\in{\rm Lip}_1(d_1)$ it holds
$$
|P[f(X_{k+1},\ldots,X_n)|x^{(k)}]-P[f(X_{k+1},\ldots,X_n)|y_k,x^{(k-1)}]|\le Sd(x_k,y_k).
$$
From Remark \ref{rk:st} we have  $\|\Gamma(1)\|_1\le 1+S$ and thus Theorem \ref{th:wti} recovers the Hoeffding inequality of \cite{djellout:guillin:wu:2004}. Examples of $\Gamma_{d,d}(1)$-weakly dependent   time series are given in \cite{djellout:guillin:wu:2004}. In particular, ARMA processes with sub-gaussian innovations satisfy the conditions of Theorem \ref{th:wti} for $p=1$, $d=d'=N$. Thus they satisfy Hoeffding's inequality (which is not dimension free).

\subsection{$\tilde \Gamma(p)$-weakly dependent examples}\label{rm:tgp}
We denote $\Gamma_{1,1}(p)=\tilde\Gamma(p)$. For this choice of metrics the optimal coupling scheme is given by the maximal coupling  \cite{goldstein:1979}. We have
$$
\tilde\gamma_{k,i}(p)=\sup_{x^{(i)}, y_i}\|P_{|x^{(i)}}-P_{|y_i,x^{(i-1)}}\|_{TV}^{1/p}
$$
where $\|P-Q\|_{TV}=\sup_A|P(A)-Q(A)|$ for any distributions $P$ et $Q$. The $\tilde\Gamma(p)$ weakly dependent condition coincides with the ones used by Samson \cite{samson:2000} for $p=2$ and by Rio \cite{rio:2000} and Kontorovitch and Ramanan \cite{kontorovich:ramanan:2008}  for $p=1$.\\

For Markov chains, the $\tilde\Gamma(p)$ weakly dependent condition is equivalent to the uniform ergodicity condition. In the stationary case, $\tilde\gamma_{k,i}^p\le 2\phi_{k-i}$ where $\phi$ is the uniform mixing coefficient introduced by Ibragimov \cite{ibragimov:1962}. For $p=2$, we recover the transport inequality obtained by Samson \cite{samson:2000} as any $P_{x_j|x^{(j-1)}}$ satisfies $\tilde T_{2 ,1} (1)$ and by an application of Theorem \ref{th:wti} we obtain
$$
\inf_{\pi\in \tilde M}\Big(\sum_{i=1}^nQ[\pi[X_i\neq Y_i~|~Y_i]^2\Big)^{1/2}\le \|\tilde\Gamma(2)\|_2\sqrt{2\mathcal K(Q|P)}.
$$
Notice that we use here the minimax theorem of Sion and the Proposition 1 of \cite{marton:1996a} to obtain the identity
$$
\inf_{\pi\in \tilde M}\sup_{\alpha_j>0}\frac{\sum_{j=1}^n\pi[\alpha_j(Y)1_{X_j\neq Y_j}]}{(\sum_{j=1}^nQ[\alpha_j(Y)^2])^{1/2}}=\inf_{\pi\in \tilde M}\Big(\sum_{i=1}^nQ[\pi[X_i\neq Y_i~|~Y_i]^2\Big)^{1/2}.
$$
In the stationary case, any $\phi$-mixing processes are $\tilde \Gamma(p)$-weakly dependent with $\|\tilde \Gamma(p)\|_p\le 1+\sum_{i=1}^n(2\phi_i)^{1/p}$ for any $1\le p\le 2$, see \cite{samson:2000}. But the $\tilde \Gamma(p)$-weakly dependence is also satisfied for non stationary sequences, see \cite{kontorovich:ramanan:2008}. However, when $E$ is a real vector space, the choice of the Hamming distance is not natural and the resulting weakly dependent conditions are often too restrictive.

\subsection{$\Gamma_{N,d'}(2)$-weakly dependent exemples}
In what follows, we show that the  choice $d=N$  is natural in many examples in $E=\R^k$. We focus on two generic examples: the Stochastic Recurrent Equations, treated in \cite{djellout:guillin:wu:2004} when $p=1$ only, and the chains with infinite memory  \cite{doukhan:wintenberger:2008}. As an explicit expression of these coefficients is not available, we use the natural coupling provided by the structure of the model to estimate the coefficients $\gamma_{k,i}$.

\begin{exm}[Stochastic Recurrent Equations (SREs)]
Consider the SRE (also called Iterated Random Functions in \cite{duflo:1997} and Random Dynamical Systems in \cite{djellout:guillin:wu:2004}) 
\begin{equation}\label{defsre}
X_0(x) := x \in E,\qquad X_{t+1}(x) = \psi_{t+1}(
X_t(x)),\qquad t\ge  0,
\end{equation}
where $(\psi_t)$ is a sequence of iid random maps. We denote $P$ the probability of the whole process $(\psi_t)_{t\ge 1}$. Assume in the next proposition  that $d$ and $d'$ are any semi-lower continuous metrics satisfying $d\le Md'$ for some $M>0$.
\begin{prop}\label{prop:sre}For any $1\le p\le 2$, if the distribution of $\psi_1(x)$ belongs to $\tilde T_{p,d'}(C)$ or $\tilde T_{p,d'}^{(i)}(C)$ for any $x\in E$ and if there exists some $S>0$ satisfying 
\begin{equation}\label{eq:sre}
\sum_{t=1}^\infty P[d^p(X_t(x),X_t(x'))]^{1/p}\le S d'(x,x') \qquad\forall x,x'\in E.
\end{equation}
then for any $x\in E$ we have that   the law $P^n_x$ of $(X_t(x))_{1\le t\le n}$ on $E^n$  satisfies  $\tilde T_{p,d}(C(M+S)^2n^{2/p-1})$ or $\tilde T_{p,d}^{(i)}(C(M+S)^2n^{2/p-1})$ respectively.
\end{prop}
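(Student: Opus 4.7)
The plan is to apply Theorem \ref{th:wti} directly to the Markov measure $P^n_x$ on $E^n$. Two ingredients must be checked: that every one-step conditional of $P^n_x$ satisfies $\tilde T_{p,d'}(C)$ (respectively $\tilde T_{p,d'}^{(i)}(C)$), and that $P^n_x$ is $\Gamma_{d,d'}(p)$-weakly dependent with $\|\Gamma(p)\|_p\le M+S$. The SRE structure \eqref{defsre} together with the i.i.d.\ assumption on $(\psi_t)$ makes both verifications transparent, and \eqref{eq:sre} is tailored exactly to provide the weak dependence bound.

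For the first ingredient, the Markov property of the SRE gives that the conditional law of $X_j(x)$ given $X^{(j-1)}(x)=x^{(j-1)}$ is the law of $\psi_j(x_{j-1})$, which by the i.i.d.\ assumption coincides with the law of $\psi_1(x_{j-1})$. The latter belongs to $\tilde T_{p,d'}(C)$ (resp.\ $\tilde T_{p,d'}^{(i)}(C)$) by hypothesis.

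For the second ingredient, I exploit the common noise and use the synchronous coupling. Fix $1\le i<n$, $x^{(i)}\in E^{i+1}$ and $y_i\in E$, and couple the two conditional trajectory laws by running, for $t>i$, the coupled recursions $\widetilde X_t=\psi_t(\widetilde X_{t-1})$ and $\widetilde Y_t=\psi_t(\widetilde Y_{t-1})$ started at $x_i$ and $y_i$ respectively, driven by the same innovations $(\psi_t)_{t>i}$. By the i.i.d.\ assumption the joint law of $(\widetilde X_{i+k},\widetilde Y_{i+k})$ equals that of $(X_k(x_i),X_k(y_i))$, so for every $i<k\le n$,
\begin{equation*}
W_{p,d}\bigl((P^n_x)_{x_k|x^{(i)}},(P^n_x)_{x_k|x^{(i-1)},y_i}\bigr) \le P[d^p(X_{k-i}(x_i),X_{k-i}(y_i))]^{1/p}.
\end{equation*}
Setting $\gamma_{k,i}(p) = P[d^p(X_{k-i}(x_i),X_{k-i}(y_i))]^{1/p}/d'(x_i,y_i)$, these coefficients depend only on $k-i$, so Remark \ref{rk:st} applies and \eqref{eq:sre} yields $\|\Gamma(p)\|_p \le M+S$. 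Invoking Theorem \ref{th:wti} now produces exactly the announced weak transport inequality with constant $C(M+S)^2 n^{2/p-1}$.

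The main subtlety to verify is that the synchronous coupling is genuinely a Markov coupling in the sense of Section \ref{sec:wtc}, so that the hypotheses of Theorem \ref{th:wti} are satisfied; this is clear here because conditioning on the past of one trajectory only fixes the starting point of the coupled dynamics while leaving the shared driving noise $(\psi_t)_{t>i}$ untouched, so the constructed $\pi_{|i}$ factorises along the time axis in precisely the form required by the definition of Markov couplings.
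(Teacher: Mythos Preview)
Your proof is correct and follows the same route as the paper: apply Theorem~\ref{th:wti} after checking the one-step conditionals via the Markov property and the $\Gamma_{d,d'}(p)$-weak dependence via the synchronous (common-noise) coupling, then use that the resulting coefficients depend only on $k-i$ to bound $\|\Gamma(p)\|_p\le M+S$ through Remark~\ref{rk:st}. The paper phrases the last step slightly differently---bounding the row and column sums of $\Gamma(p)$ separately via the exchangeability of $(\psi_1,\dots,\psi_n)$---but the two arguments are equivalent.
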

\begin{proof}
The result is proved by an application of Theorem \ref{th:wti}. The condition of $\Gamma_{d,d'}$-weak dependence is satisfied because the joint law of $(X_t(x),X_t(x))_{t\ge 1}$ is a natural coupling scheme $\pi_{|0}$ of the law of $(X_t)_{t\ge 1}$ given that $(X_0,X_{-1},X_{-2}\ldots)=(x,x_{-1},x_{-2},\ldots)$ and $(X_0,X_{-1},X_{-2}\ldots)=(x',x_{-1},x_{-2},\ldots)$. We obtain similarly natural coupling schemes $\pi_{|i}$ for any $i\ge 0$ and the coefficients $\gamma_{k,i}$ satisfy the relation $\sum_{k>i}\gamma_{k,i}(p)\le S$. The fact that the relation $\sum_{k<n}\gamma_{n,k}(p)\le S$ also holds for any $n\ge 2$ follows from the exchangeability of $(\psi_1,\ldots,\psi_n)$. Using similar arguments than in Remark \ref{rk:st}, we obtain that $\|\Gamma_{d,d'}(p)\|_p\le M+S$. The result is proved as, by the Markov property, $P_{x_j|x^{(j-1)}}$ is the law of $\psi_j(x_{j-1})$ that satisfies $\tilde T_{p,d'}(C)$ or $\tilde T_{p,d'}^{(i)}(C)$ by assumption.
\end{proof}
Let us detail two classical SREs, the ARMA models and the general affine processes when $d=d'=N$. The two first examples cannot be treated optimally by existing results in \cite{djellout:guillin:wu:2004,marton:2004} that use contractive conditions. \end{exm}
\begin{exm}[ARMA  models] Consider the ARMA model
$$
X_0(x) = x,\qquad X_{t+1}(x) = AX_t(x) + \xi_{t+1}
$$
in $E =\R^k$ where $A\in \mathcal M_{k, k}$    (the space of $k \times k$ matrices) and $(\xi_t)$ is a sequence
of  iid random vectors in $\R^k$ called the innovations.  This model is a particular case of the general
model above with $\psi_t(x) = Ax + \xi_t$. The $\Gamma_{N,N}(p)$-weak dependence condition is
equivalent to
$$
\rho_{sp}(A) := \max\{|\lambda|;\mbox{ $\lambda$ is an eigenvalue in $\C$ of $A$} \}< 1,
$$
which is the necessary and sufficient
condition for the ergodicity of this linear ARMA model $(X_t)$. The conditions of Proposition \ref{prop:sre} are satisfied if the law of $\xi_1$ satisfies $\tilde T_{p,N}(C)$ or $\tilde T_{p,N}^{(i)}(C)$. It is in particular the case with $p=2$ for bounded and gaussian innovations $\xi_t$.

The notion of $\Gamma_{N,N}(2)$-weak dependence is more general than the usual mixing ones. For instance, the solution of $X_{t+1}=1/2X_{t}+\xi_{t+1}$ with $\xi_1\sim\mathcal B(1/2)$ is $\Gamma_{N,N}(2)$-weakly dependent but not mixing, see \cite{andrews:1984}.
\end{exm}

\begin{exm}[General affine processes]\label{gap}
Consider now the specific SRE
$$
X_0(x) = x,\quad X_{t+1}(x) = f(X_t(x))+M(X_t(x))\xi_{t+1}\quad \forall t\ge 1,
$$
where $E = \R^k$,
$\xi_t\in \R^{k'}$, $f :$ $\R^k\mapsto \R^k$, $M:$ $\R^k\mapsto \mathcal M_{k,k'}$ (the space of $k\times k'$ matrices) and the
noise $(\xi_t)$ is a sequence of iid random vectors of $\R^{k'}$ such that its distribution $P_\xi$ is centered. Fix $p=2$ and assume that:
\begin{enumerate}
\item[(a)] $P_\xi\in\tilde T_{2,N}(C)$ or $\tilde T_{2,N}^{(i)}(C)$ on $\R^{k'}$;
\item[(b)] $\|M(x) \|  \le K$, $\forall x \in \R^k$, $K>0$, $\|\cdot\|$ denoting also the operator norm on $\mathcal M_{k,k'}$ associated with the euclidian norms on $\R^k$ and $\R^{k'}$;
\item[(c)] the Lyapunov exponent in $L^2$ satisfies
$$
\lambda_{max}(L^2):=\lim_{t\to \infty}\Big(\sup_{x\neq y}\frac{P[\|X_t(x)-X_t(y)\|^2]}{\|x-y\|^2}\Big)^{1/t}<1.
$$
\end{enumerate}
Using a version of Lemma 2.1 in \cite{djellout:guillin:wu:2004} we obtain that conditions (1) and (2) implies that $P_{x_i|x_{i-1}}\in\tilde T_{2,\|\cdot\|}(CK^2)$ or $\tilde T_{2,\|\cdot\|}^{(i)}(CK^2)$. Moreover condition \eqref{eq:sre} is satisfied for some $S>0$ and thus $P_x^n$ satisfies  $\tilde  T_{2,N}(CK^2(1+S )^2)$ or $\tilde T_{2,N}^{(i)}(CK^2(1+S )^2)$ for any $x\in E$.  
\end{exm}

\begin{exm}[Chains with Infinite Memory]
Here assume that $d=d'=d$ is any semi lower-continuous distance. Consider  chains with infinite memory define in \cite{doukhan:wintenberger:2008}  for any function $F:\, E^{ \N}\times \mathcal X\mapsto E$ by the relation:
\begin{equation}\label{cim}
X_t(x)=F(X_{t-1},X_{t-2},\ldots,X_1,x_0,x_{-1},x_{-2},\ldots;\xi_t),\qquad \forall t\ge 1,
\end{equation}
for any  sequence $x=(x_{-t})_{t\ge 0}\in E^{ \N}$ and any iid innovations $\xi_t$ on some measurable space $\mathcal X$.
This model does not satisfy the Markov property. However, it still exists a natural coupling scheme of the law of  $(X_t)_{t\ge 1}$ given that $(X_0,X_{-1},X_{-2}\ldots)=x$ and $(X_0,X_{-1},X_{-2}\ldots)=(y_0,x_{-1},x_{-2},\ldots)$. Indeed, define recursively the trajectory $(Y_t)_{t\ge 1}$ by the relation
$$
Y_t=F(Y_{t-1},Y_{t-2},\ldots,Y_1,y_0,x_{-1},x_{-2},\ldots;\xi_t),\qquad \forall t\ge 1,
$$
where the innovations $(\xi_t)_{t\ge 1}$ are the same than in \eqref{cim}. Then the natural coupling scheme $\pi_{|0}$ is the distribution of $(X_t(x),Y_t)_{t\ge 1}$. Denote $P$ the law of the innovations process $(\xi_t)$ and $P^n_x$ the law of $(X_t(x))_{1\le t\le n}$ on $E^n$, we have
\begin{prop}
Assume there exists a sequence of non negative numbers $(a_i)$ such that $\sum_{i\ge 1}a_i=a<1$, $\sum_{i\ge1}i\log(i)a_i<\infty$ and
\begin{equation}\label{condcim}
P [d(F(x_1,x_2,\ldots;\xi),F(y_1,y_2,\ldots;\xi))^p]^{1/p}\le \sum_{i\ge 1}a_id(x_i,y_i),
\end{equation}
for any $x=(x_1,x_2,\ldots)$ and $y=(y_1,y_2,\ldots)$ in $E^{ \N}$. If the distribution of $F(y;\xi_1)$, $y\in E^\N$, satisfies $\tilde T_{p,d }(C)$ or $\tilde T_{p,d }^{(i)}(C)$ then it exists $C'>0$ such that $P^n_x$ satisfies $\tilde T_{p,d }(C'n^{2/p-1})$ or $\tilde T_{p,d }^{(i)}(C'n^{2/p-1})$ respectively.
\end{prop}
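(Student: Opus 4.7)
The plan is to reduce to Theorem~\ref{th:wti} applied with $d' = d$ (so $M=1$), which requires two things: (i) each one-step conditional $P_{x_j\mid x^{(j-1)}}$ satisfies $\tilde T_{p,d}(C)$ (respectively, $\tilde T_{p,d}^{(i)}(C)$), and (ii) $P^n_x$ is $\Gamma_{d,d}(p)$-weakly dependent with $\|\Gamma(p)\|_p$ bounded independently of $n$. Part (i) is immediate, since conditional on $x^{(j-1)}$ (together with the fixed infinite past $(x_{-1},x_{-2},\ldots)$), the law of $X_j$ is that of $F(y;\xi_j)$ for the corresponding $y\in E^{\mathbb N}$, which satisfies the relevant one-dimensional weak transport inequality by assumption.

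For (ii), I would take as $\pi_{|i}$ the natural synchronous coupling: both trajectories start from the histories $(x_i,x^{(i-1)},x_{-1},\ldots)$ and $(y_i,x^{(i-1)},x_{-1},\ldots)$, respectively, and evolve forward using the \emph{same} innovations $(\xi_{i+1},\xi_{i+2},\ldots)$, exactly as in the $\pi_{|0}$ constructed in the paragraph before the statement. Setting $\delta_m := P[d(X_{i+m},Y_{i+m})^p]^{1/p}$, I condition on the $\sigma$-field generated by the two coupled trajectories up to time $i+m-1$, apply the contraction \eqref{condcim} pointwise to the deterministic past at that conditioning, and then take $L^p$-norms using Minkowski's inequality. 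This yields
\begin{equation*}
\delta_m \;\le\; \sum_{j=1}^{m} a_j\,\delta_{m-j},\qquad m\ge 1,\qquad \delta_0 = d(x_i,y_i).
\end{equation*}
By homogeneity, $\delta_m \le u_m\,d(x_i,y_i)$ where $(u_m)$ satisfies $u_0=1$ and $u_m\le\sum_{j=1}^m a_j u_{m-j}$, independently of $i$. One can therefore define $\gamma_{k,i}(p):=u_{k-i}$ for $k>i$, meeting the requirement \eqref{eq:wd}.

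To control $\|\Gamma(p)\|_p$, I would pass to the generating function $U(z)=\sum_{m\ge 0}u_m z^m$. The recursion gives $U(z)\le 1/(1-A(z))$ with $A(z)=\sum_{j\ge 1}a_j z^j$, and at $z=1$ the hypothesis $a:=\sum_j a_j<1$ yields $\sum_{m\ge 0}u_m\le 1/(1-a)$. Both row- and column-sums of the triangular matrix $\Gamma(p)$ are bounded by this quantity, so $\|\Gamma(p)\|_1,\|\Gamma(p)\|_\infty\le 1/(1-a)$, and by Riesz--Thorin interpolation $\|\Gamma(p)\|_p\le 1/(1-a)$ uniformly in $n$. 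Theorem~\ref{th:wti} then delivers the claim with $C'=C/(1-a)^2$; the inverted form is handled in the same way since \eqref{condcim} is symmetric in $(x_i,y_i)$, and the synchronous coupling is unchanged.

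The one technical obstacle is the lifting step from the pointwise contraction \eqref{condcim} to the convolution recursion on $\delta_m$: the condition is stated for deterministic arguments, whereas one must apply it to the random, coupled past, which requires conditioning together with Minkowski to keep the $L^p$-norm linear in the past discrepancies. Note that the hypothesis $\sum_i i\log(i)\,a_i<\infty$ plays no role here; it is inherited from \cite{doukhan:wintenberger:2008} to guarantee existence of a stationary chain and is not needed for the summability $\sum_m u_m<\infty$, which only uses $a<1$.
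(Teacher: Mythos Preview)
Your reduction to Theorem~\ref{th:wti} via the synchronous coupling is exactly the paper's route, and your derivation of the convolution recursion $\delta_m\le\sum_{j=1}^m a_j\delta_{m-j}$ with $\delta_0=d(x_i,y_i)$ is correct (conditioning plus Minkowski is the right lifting).

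Where you depart from the paper is in the summability step. The paper does not use your renewal/generating-function argument $U(1)\le 1/(1-a)$; instead it invokes the pointwise decay estimate from \cite{doukhan:wintenberger:2008},
\[
u_t\;\le\;\inf_{1\le r\le t}\Bigl\{a^{t/r}+\sum_{j\ge r}a_j\Bigr\},
\]
and then optimizes in $r$ to obtain $\sum_t u_t<\infty$ under the additional hypothesis $\sum_i i\log(i)\,a_i<\infty$. Your observation that this moment condition is superfluous for the bound on $\|\Gamma(p)\|_p$ is correct: $a<1$ alone gives $\sum_{m\ge0}u_m\le 1/(1-a)$, hence $\|\Gamma(p)\|_1,\|\Gamma(p)\|_\infty\le 1/(1-a)$ and, by the interpolation inequality already used in Remark~\ref{rk:st}, $\|\Gamma(p)\|_p\le 1/(1-a)$, yielding the explicit constant $C'=C/(1-a)^2$. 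The trade-off is that the paper's detour through \cite{doukhan:wintenberger:2008} gives an actual decay \emph{rate} for the coefficients $\gamma_{i+t,i}$ (useful elsewhere), whereas your argument gives only summability---but summability is all that the proposition requires.
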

\begin{proof}
Let us compute a bound for the coefficients $P[d^p(X_t(x),Y_t)]^{1/p}$ for all $t\ge 0$ that are estimates of the coefficients $\gamma_{t,0}$. Fix $$\gamma_{1,0}=P [d(F(x_0,x_{-1},\ldots;\xi),F(y_0,x_{-1},\ldots;\xi))^p]^{1/p}.$$ 
Because $a<1$ then $\gamma_{i+t,i}$ is decreasing with $t\ge 1$ for any $i\ge 0$. We have
$$
\gamma_{i+t,i}\le \sum_{j=1}^{t-1}a_j\gamma_{t-j,0},\qquad t,i\ge 1.
$$
Arguments similar than in the proof of Theorem 3.1 in \cite{doukhan:wintenberger:2008} yields
$$
\gamma_{i+t,i}\le \gamma_{1,0}\inf_{1\le p\le t}\Big\{ a^{t/p}+\sum_{j\ge p}a_j\Big\}.
$$
The desired result follows by choosing $p=cr/\log(r)$ such that $\sum_{t\ge1}\gamma_{i+t,i}<\infty$.
\end{proof}
\end{exm}
\begin{exm}[AR($\infty$) models]
As an example of  chains with infinite memory in $E=\R$, consider $(X_t)$ the stationary solution  to the autoregressive equation
$$
X_t=\sum_{i\ge 1}a_iX_{t-i}+\xi_t,\qquad t\in\Z,
$$
where the real numbers $a_i$ are such that $ \sum_{i\ge 1}|a_i|<1$ and $\sum_{i\ge 1}i\log(i)|a_i|<\infty$. Then if $\xi_1$ satisfies $\tilde T_{2,N}(C)$ or $\tilde T_{p,N}^{(i)}(C)$ then the distribution of $(X_1,\ldots,X_n)$ satisfies $\tilde T_{2,N}(C')$ or $ T_{p,N}^{(i)}(C')$, $C'>0$ for any $n\ge 1$. 
\end{exm}
\begin{exm}[General affine processes with infinite memory]
Consider the process on $E=\R^k$ defined as the solution of 
$$
X_t(x)=f(X_{t-1},X_{t-2},X_{t-3}, \ldots)+M(X_{t-1},X_{t-2},X_{t-3}, \ldots)\xi_t,\qquad \forall t\ge 1$$
where $f$ and $M$ are Lipschitz continuous functions with value in $\R^k$ and $\mathcal M(k,k')$ respectively. These general affine models includes classical econometric models and is  estimated in a parametric setting by the quasi maximum likelihood estimator in \cite{bardet:wintenberger:2009}. Denote for $\Psi=f$ and $\Psi=M$ the Lipschitz coefficients
$$
\|\Psi(x)-\Psi(y)\|\le \sum_{i\ge 1}\alpha_i(\Psi)\|x_i-y_i\|, \qquad \forall x,\; y \in E^{  \N}.
$$
If the condition (a) of Example \ref{gap} is satisfied and $\|M(x) \|  \le K$, $\forall x \in E^{ \N}$, $K>0$, $\sum_{i\ge 1}\alpha_i(f)+P_\xi[\xi^2]^{1/2}\alpha_i(M)<1$ and $\sum_{i\ge 1}i\log(i)(\alpha_i(f)+P_\xi[\xi^2]^{1/2}\alpha_i(M))<\infty$ then the distribution of $(X_1,\ldots,X_n)$ satisfies $\tilde T_{2,N}(C')$ or $ T_{2,N}^{(i)}(C')$, $C'>0$, for any $n\ge 1$. 
\end{exm}
\section{Applications to oracle inequalities with fast convergence rates}\label{sec:oi}
In this section, we use the weak transport inequality to obtain new nonexact oracle inequalities in the $\Gamma(2)$-weakly dependent setting and new exact oracle inequalities in the $\tilde\Gamma(2)$-weakly dependent setting. Instead of using the Talagrand concentration inequalities given in Propositions \ref{pr:tal} and \ref{pr:tal2} we prefer to use a more direct approach using conditional weak transport inequalities.

\subsection{The statistical setting}
We focus on oracle inequalities for the the ordinary least square estimator.
Let us consider the case of the linear regression where $X=(Y,Z)=(Y,Z^{(1)},\ldots,Z^{(d)})$ and $E=\R^{ d+1}$. 
The empirical risk is denoted
$$
r(\theta)=\frac1n\sum_{i=1}^n(Y_i-Z_i\theta)^2
$$
where $(X_i)_{1\le i\le n}=(Y_i,Z_i)_{1\le i\le n}$ are the observations and $\theta\in\R^d$ is a parameter that has to be estimated. In our context, these observations are not necessarily independent nor identically distributed and we denote by $P$ their distribution.  The risk of prediction is denoted
$$
R(\theta)=P[r(\theta)]\qquad \forall \theta \in\R^d.
$$
The aim is to estimate the value  $\overline \theta\in\R^d$ such that
$
R(\overline \theta)\le R( \theta)$,  $\forall \theta \in\R^d.
$ 
We consider the Ordinary Least Square (OLS) estimator $\hat\theta$ of $\overline \theta$ such that $r(\hat\theta)\le r(\theta)$ for all $\theta\in\R^d$.  We denote the excess of risk $\overline R(\theta)=R(\theta)-R(\overline \theta)\ge 0$, $\overline r$ its empirical counterpart, $\mathcal Z=(Z_i)_{1\le i\le n}$ the $n\times d$ matrix of the design, $\|\mathcal Z\|^2_n=n^{-1}\sum_{i=1}^n\|Z_i\|^2$ and $G=P[\mathcal Z^T\mathcal Z]$ its corresponding Gram's matrix. Assume that $G$ is a definite positive matrix and denote 
$
\rho=\max(1,\rho_{sp}(G^{-1})).
$
All the results of this sections are given for probability measures $P$ satisfying $T_{2,d}(C)$ and $T_{2,d}^{(i)}(C)$ for some $C>0$ on $E^n$, $n\ge 1$, with $d=N$ or $1_2$. In view of Theorem \ref{th:wti} we focus on the case $p=2$ to get dimension free concentration.   The constant $C$ in the weak transport inequality has to be estimated in each specific statistical case via the $\Gamma_{N,N}(2)$ or $\tilde \Gamma(2)$-weak dependence properties of $(Y_i,Z_i)_{1\le i\le n}$. The case of possibly non linear autoregression is of special interest. There, the vector $Z_i$ is a function of the past values $\varphi(Y_{i-1},\ldots,Y_{1})$ where $\varphi$ can be chosen as the projection on the last coordinates (case of linear autoregression), functions on Fourier basis or wavelets, etc.  If the maximal order of delay $\ell\ge 1$ is fixed, we have $\gamma_{k,0}^{X}(2)\le \gamma^{Y}_{\lceil k/\ell\rceil ,0}(2)$ and in the non-linear autoregressive  case, $\tilde \gamma^{X}_{k,0}(2)\le \|\varphi\|_\infty\tilde \gamma^{Y}_{\lceil k/\ell\rceil ,0}$. Under conditions on the dependence of $Y$, the $\tilde \gamma(2)$ coefficients of $X$ are nicely estimated for any bounded measurable functions $\varphi$ whereas the $\gamma(2)$ coefficients depend also on the regularity of $\varphi$. Thus, a tradeoff is done: $d=N$ is restricted to regular functions $\varphi$ and some sub-gaussian margins but the dependence structure of the observation is vast. Conversely, $d=1$ corresponds to general functions $\varphi$ with no assumptions on the margins but for observations that are nearly independent.

\subsection{Conditional weak transport inequalities}
We recall the classical approach based on the empirical process concentration to motivate the introduction of our new approach. Following \cite{massart:2007}, oracle inequalities will follow from the concentration properties of $\overline r(\hat\theta)$. However, as the distribution of $\hat \theta$ is difficult to deal with, one studies the concentration of the supremum of the empirical process
$$
f(X_1,\ldots,X_n)=\sup_{\theta\in\Theta}\{\overline r(\theta)-\overline R(\theta)\}.
$$
Likely $f$ is a self-bounding function (for $d=1_2$) and one can use the weak transport to extend Bernstein inequalities on $f$ to dependent settings, see \cite{samson:2000}. To obtain oracle inequalities, it remains to study the expected value of the supremum of the empirical process. In an independent context, a classical solution consists in using a chain in argument and an entropy metric approach, see Chapter 13 of \cite{boucheron:lugosi:massart:2013}. In dependent settings, it is not an easy task because the entropy metric depends on the mixing properties, see \cite{rio:1998}. 

Here we take another route that avoid the study of the supremum of the empirical process. Following the PAC-bayesian approach \cite{mcallester:1999,catoni:2004}, the idea is to consider probability measures $\rho_{\theta}$ centered on $\theta$. Then the concentration of properties of $\overline r(\hat\theta)$ will follows from the transport properties of the measure $P\rho_{\hat \theta}$ on $E^n\times \Theta$ equipped with some metric $d$. Notice that $\rho_{\hat \theta}$ is a probability measure defined conditionally to the observations $(X_i)$. Thus, the properties of the measure $P\rho_{\hat\theta}$ are not simple to handle directly. The PAC-bayesian approach consist in introducing artificially the measure $\rho_{\overline \theta}$ called a priori because it does not depend on the observations $(X_i)$. The conditional weak transport approach then extends the transport properties of $P$ for the metric $d_\theta(x,y)=d((x,\theta),(y,\theta))$ to $\rho_{\overline\theta}P$ for $d$. Then we transport $P\rho_{\hat\theta}$ to $Q\rho_{\hat \theta}$, for any $Q$. Finally, considering $f=\overline r-\overline R$ or $f=r-R$, noticing that $f$ has nice "self-bounding" properties for $d_\theta=1_2$ or $d=N$ respectively, we obtain oracle inequalities via $Q\rho_{\hat \theta}[f]$ because $\rho_{\overline\theta}P[f]=0$, see  \eqref{eq:ineq1} for $d_\theta=N$ and \eqref{eq:ineq2} for $d_\theta=1_2$. More formally, for any metric space $\Theta$, we have
\begin{prop}\label{pr:cwt}
If $P$ satisfies $\tilde T_{p,d_\theta}(C_\theta)$ or $\tilde T_{p,d_\theta}^{(i)}(C_\theta)$, $\theta\in\Theta$ then for any $\mu\in M^+(\Theta)$ we have $\mu \otimes P$ that satisfies $\tilde T_{p,d }$ or $\tilde T_{p,d }^{(i)}$ with constant $\mu[C_\theta]$ when $p=1$ and $\sup_\Theta C_\theta$ otherwise.
\end{prop}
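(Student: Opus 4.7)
The plan is to reduce the weak transport inequality for $\mu\otimes P$ on $\Theta\times E^n$ to the fiberwise inequalities for $P$ via disintegration plus a H\"older--Jensen combination, exploiting that the metric $d$ on the diagonal $\{(x,\theta),(y,\theta)\}$ is exactly $d_\theta$. Given any target $Q$, disintegrate $Q(d\theta,dx)=\mu(d\theta)Q_{|\theta}(dx)$ along the $\Theta$-marginal $\mu$ (this is the natural matched-prior setting for the conditional transport underlying the PAC-Bayesian application in the sequel); the chain rule gives
$$\mathcal K(Q|\mu\otimes P)=\int \mathcal K(Q_{|\theta}|P)\,d\mu(\theta).$$

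Next, fix $\alpha\in M^+(\Theta\times E^n)$ and, for each $\theta$, apply the hypothesis $\tilde T_{p,d_\theta}(C_\theta)$ to $P$ with the section $\alpha(\cdot,\theta)$ and target $Q_{|\theta}$: this produces $\pi^\theta\in\tilde M(P,Q_{|\theta})$ satisfying
$$\pi^\theta[\alpha(Y,\theta)\,d_\theta(X,Y)]\le Q_{|\theta}[\alpha(\cdot,\theta)^q]^{1/q}\sqrt{2C_\theta\,\mathcal K(Q_{|\theta}|P)}.$$
Assemble these into a single coupling $\pi=\int(\delta_\theta\otimes\delta_\theta)\otimes\pi^\theta\,d\mu(\theta)$ of $\mu\otimes P$ and $Q$. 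Because $\Theta=\Theta'$ under $\pi$, the metric collapses to $d_\Theta$ and
$$\pi\bigl[\alpha(Y,\Theta')\,d((X,\Theta),(Y,\Theta'))\bigr]=\int\pi^\theta[\alpha(Y,\theta)\,d_\theta(X,Y)]\,d\mu(\theta).$$

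The core inequality is then obtained by H\"older's inequality with exponents $(q,p)$ against $\mu$,
$$\int\pi^\theta[\alpha\,d_\theta]\,d\mu\le\Bigl(\int Q_{|\theta}[\alpha(\cdot,\theta)^q]\,d\mu\Bigr)^{1/q}\Bigl(\int(2C_\theta\mathcal K(Q_{|\theta}|P))^{p/2}\,d\mu\Bigr)^{1/p},$$
whose first factor is exactly $Q[\alpha^q]^{1/q}$. For $p>1$ bound $C_\theta\le\sup_\Theta C_\theta$ and apply Jensen's inequality to the concave map $x\mapsto x^{p/2}$ (valid because $p\le 2$) to extract $\sqrt{2\sup_\Theta C_\theta\,\mathcal K(Q|\mu\otimes P)}$. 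For $p=1$ (where $q=\infty$ and the normalization reduces to an essential supremum so Jensen is no longer applicable in the same way), use Cauchy--Schwarz on $\int\sqrt{2C_\theta\,\mathcal K(Q_{|\theta}|P)}\,d\mu$ to obtain $\sqrt{2\mu[C_\theta]\,\mathcal K(Q|\mu\otimes P)}$, which yields the arithmetic mean $\mu[C_\theta]$ announced in that regime.

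Taking the infimum over couplings (our construction depended on $\alpha$ but is admissible for any $\alpha$) and the supremum over $\alpha$ yields $\tilde T_{p,d}$ with constant $\sup_\Theta C_\theta$ when $p>1$ and $\mu[C_\theta]$ when $p=1$. The inverted case $\tilde T_{p,d_\theta}^{(i)}(C_\theta)$ is treated by the same scheme with the roles of $P$ and $Q_{|\theta}$ exchanged in the choice of $\pi^\theta$. The most delicate step is the interplay of H\"older and Jensen that compresses the $\theta$-averaged bounds into the single factor $\sqrt{2C\,\mathcal K(Q|\mu\otimes P)}$; once the diagonal coupling on $\Theta$ is adopted this is essentially the only analytic content of the proof.
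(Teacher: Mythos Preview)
Your argument is correct and reaches the same conclusion, but it proceeds along a different axis than the paper. The paper does not touch couplings at all: it invokes the \emph{linearized dual form} extracted in the proof of Theorem~\ref{th:E1}, namely that $\tilde T_{p,d_\theta}(C_\theta)$ is equivalent to
\[
Q_\theta\bigl[\lambda(f_{\alpha,d}-\lambda C_\theta\alpha^q/q)\bigr]-P[\lambda f]\le (1/p-1/2)\,C_\theta\lambda^2+\mathcal K(Q_\theta|P),
\]
and then simply integrates this linear-in-$Q_\theta$ inequality against $\mu$, using $\mathcal K(\mu Q_\theta|\mu\otimes P)=\mu[\mathcal K(Q_\theta|P)]$. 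The split between $p=1$ and $p>1$ emerges automatically: when $q=\infty$ the $\alpha^q/q$ term vanishes and only the constant $(1/p-1/2)C_\theta\lambda^2$ survives, giving $\mu[C_\theta]$; when $q<\infty$ one needs $C\ge C_\theta$ inside the $\alpha^q$ penalty as well, forcing $C=\sup_\Theta C_\theta$.

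Your route is primal: you build the diagonal-on-$\Theta$ coupling explicitly and recombine the fiberwise transport bounds via H\"older in $\mu$ (exponents $q,p$) followed by Jensen on $x\mapsto x^{p/2}$, with Cauchy--Schwarz replacing this at $p=1$. This is a perfectly legitimate alternative; it is more self-contained (no appeal to the dual characterization) but requires the extra analytic bookkeeping you flag as ``the most delicate step,'' whereas the paper's linearization makes the $\mu$-integration a one-liner. Both arguments rest on the same disintegration $\mu Q_\theta$ of the target (your ``matched-prior'' remark) and hence on the same chain-rule identity for the entropy, so neither is more general in that respect.
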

\begin{rmk}
The result does not depend on the metric on $\Theta$ nor the transport properties of $\mu$.
\end{rmk}
\begin{proof}
Denote any measure on $E^n\times \Theta$ has $Q\nu$ where $Q\in M^+(E^n)$ and $\nu$ is defined conditionally to $X$. 
From the proof of Theorem \ref{th:E1}, $\tilde T_{p,d }$ is equivalent to the linear inequality
$$
Q\nu [\lambda(f_{\alpha,d}- { \lambda C_\theta}  \alpha^q/q)]-\mu\otimes P[\lambda f]\le (1/p-1/2) C_\theta \lambda^{2}+\mathcal K(Q\nu|\mu\otimes P).
$$
Denote  $Q_\theta$ the conditional probability measure such that $\mu Q_\theta=Q \nu$. From $\tilde T_{p,d_\theta}(C_\theta)$, we infer that
$$
Q_\theta[\lambda(f_{\alpha,d}- { \lambda C_\theta}  \alpha^q/q)]- P[\lambda f]\le (1/p-1/2) C_\theta \lambda^{2}+\mathcal K(Q_\theta |  P).
$$
We obtain the desired result by linearity integrating in $\mu$ and remarking that 
$
\mathcal K(Q\nu|\mu\otimes P)=\mathcal K(\mu Q_\theta |\mu\otimes P)=\mu[\mathcal K( Q_\theta |  P)]
$
\end{proof}

\subsection{Nonexact oracle inequality for $\Gamma_{N,N}(2)$-weakly dependent sequences}
Our first result is a bound on the excess of risk of the OLS estimator for $\Gamma_{N,N}(2)$-weakly dependent observations $X$. Let us first give an oracle inequality that follows from the conditional weak transport described above:
\begin{thm}\label{th:io} Assume that $X$ satisfies $T_{2,N}(C)$ and $T_{2,N}^{(i)}(C)$. For any measure $Q$ and any $\beta>0$ we have
\begin{equation}\label{eq:ci}
Q[\overline R(\hat\theta)]\le Q[\|\mathcal Z\|_n^2]/\beta
+  4 \sqrt{\rho CQ[K]n^{-1}(\mathcal K(Q|P)+ \beta Q[\overline R(\hat\theta)]/2)}
\end{equation}
where
$$
K:=4\frac d\beta+\Big(1+\|\overline \theta\|^2+\frac {d+2}\beta\Big)R(\overline \theta)+\Big(\|\overline \theta\|^2+\frac d\beta\Big)\frac{d-1}{\beta}+(1+\|\overline \theta\|^2)r(\overline \theta).
$$
\end{thm}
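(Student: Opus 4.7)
The plan is to apply the PAC-bayesian version of the conditional weak transport inequality of Proposition \ref{pr:cwt} coupled with the dual form of Theorem \ref{th:E1}. Introduce an a priori Gaussian measure $\rho_{\overline\theta}$ on $\Theta = \R^d$ centered on the oracle (hence independent of the observations) and a posterior $\rho_{\hat\theta}$ centered on the OLS, with a common covariance proportional to $\beta^{-1} I_d$ (the precise scaling is chosen so that the Gaussian moments align with the four terms of $K$). By Proposition \ref{pr:cwt} the product $\rho_{\overline\theta}\otimes P$ satisfies $\tilde T_{2,N}(C)$ and $\tilde T_{2,N}^{(i)}(C)$ on $E^n\times\Theta$, and the chain rule together with the Gaussian KL formula give
$$
\mathcal K\bigl(Q\rho_{\hat\theta}\,\big|\,\rho_{\overline\theta}\otimes P\bigr) = \mathcal K(Q|P) + \tfrac{\beta}{2}\,Q[\|\hat\theta-\overline\theta\|^2].
$$
The quadratic structure of the risk around its minimum, $R(\theta) - R(\overline\theta) = n^{-1}(\theta-\overline\theta)^T G(\theta-\overline\theta)$, together with the spectral bound $\|\hat\theta-\overline\theta\|^2 \le \rho\,\overline R(\hat\theta)$ (via $\lambda_{\min}(G)^{-1} \le \rho$), rewrites the posterior/prior KL as at most $(\beta\rho/2)\overline R(\hat\theta)$; pulling the factor $\rho \ge 1$ out of the square root then produces the $\beta Q[\overline R(\hat\theta)]/2$ term appearing in the statement.

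I then apply the dual transport inequality to the centered test function $f(x,\theta) = \overline r_x(\theta) - \overline R(\theta)$, which satisfies $\rho_{\overline\theta}\otimes P[f] = 0$ since $P[r(\theta)] = R(\theta)$ for every $\theta$. Direct Gaussian integration yields
$$
\rho_{\hat\theta}[f(x,\theta)] = \overline r_x(\hat\theta) - \overline R(\hat\theta) + \beta^{-1}\bigl(\|\mathcal Z\|_n^2(x) - P[\|\mathcal Z\|_n^2]\bigr),
$$
so using the empirical-minimization property $\overline r(\hat\theta)\le 0$ of the OLS we obtain
$$
Q[\overline R(\hat\theta)] \le -Q\rho_{\hat\theta}[f] + Q[\|\mathcal Z\|_n^2]/\beta,
$$
reducing matters to bounding $-Q\rho_{\hat\theta}[f]$. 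After a Cauchy--Schwarz optimization in $\lambda > 0$, the dual form of Theorem \ref{th:E1} gives
$$
-Q\rho_{\hat\theta}[f] \le \sqrt{2C\, Q\rho_{\hat\theta}[\|\nabla_x f\|^2]\,\mathcal K(Q\rho_{\hat\theta}\,|\,\rho_{\overline\theta}\otimes P)}.
$$

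The core calculation is the gradient: differentiating $r_x(\theta) = n^{-1}\sum_i (y_i - z_i\theta)^2$ in both the $y_i$ and $z_i$ coordinates gives
$$
\|\nabla_x r_x(\theta)\|^2 = \frac{4}{n}(1+\|\theta\|^2)\,r_x(\theta),
$$
and a triangle-type inequality bounds $\|\nabla_x f\|^2 \le (8/n)[(1+\|\theta\|^2)r_x(\theta) + (1+\|\overline\theta\|^2)r_x(\overline\theta)]$. Integrating the quartic $(1+\|\theta\|^2)r_x(\theta)$ against $\rho_{\hat\theta}$ via the moments $\rho_{\hat\theta}[\|\theta\|^2] = \|\hat\theta\|^2 + d/\beta$, $\rho_{\hat\theta}[r_x(\theta)] = r_x(\hat\theta) + \|\mathcal Z\|_n^2/\beta$ (and the mixed fourth-order moment producing the $(d-1)/\beta$ and $(d+2)/\beta$ contributions), then taking $Q$-expectation and replacing $\hat\theta$-dependent constants by their oracle counterparts at the cost of additional $\overline R(\hat\theta)$ terms absorbed on the left-hand side, produces exactly the four summands making up $K$. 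Collecting the $8/n$ from the gradient yields the prefactor $4\sqrt{\rho C Q[K]/n}$ of the statement.

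The main technical obstacle I expect lies in the final paragraph: the Gaussian integration of the quartic $(1+\|\theta\|^2)r_x(\theta)$ generates many cross terms, and matching them to produce precisely the combination $4d/\beta + (1+\|\overline\theta\|^2+(d+2)/\beta)R(\overline\theta) + (\|\overline\theta\|^2+d/\beta)(d-1)/\beta + (1+\|\overline\theta\|^2)r(\overline\theta)$ defining $K$ requires careful substitution and judicious use of the spectral bound $\rho$ to absorb residual $\overline R(\hat\theta)$ contributions. A secondary subtlety is that $f$ is neither separately convex nor concave in $x$, so one must invoke the general dual form of Theorem \ref{th:E1} rather than the convex-function Tsirel'son inequalities of Theorem \ref{th:tsi}.
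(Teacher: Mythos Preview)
Your overall PAC-Bayesian architecture (Gaussian prior $\rho_{\overline\theta}$, posterior $\rho_{\hat\theta}$, chain rule for the entropy, the reduction $Q[\overline R(\hat\theta)]\le -Q\rho_{\hat\theta}[f]+Q[\|\mathcal Z\|_n^2]/\beta$) matches the paper. The gap is in how you bound $-Q\rho_{\hat\theta}[f]$.

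\medskip
\textbf{The convexity issue is not secondary; it is the heart of the argument.} The bound you write,
\[
-Q\rho_{\hat\theta}[f]\le \sqrt{2C\,Q\rho_{\hat\theta}[\|\nabla_x f\|^2]\,\mathcal K(Q\rho_{\hat\theta}\mid \rho_{\overline\theta}\otimes P)},
\]
does not follow from the general dual form of Theorem~\ref{th:E1}: that theorem is stated in terms of the infimal convolution $f_{\alpha,d}$, and the identification $\alpha^2=\|\nabla f\|^2$ is exactly the content of Theorem~\ref{th:tsi}, which requires separate convexity (resp.\ concavity). Since $f=r(\theta)-r(\overline\theta)-R(\theta)+R(\overline\theta)$ is a difference of two convex functions of $x$, neither version applies to $f$ as a whole. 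The paper does \emph{not} work with $f$ globally; it splits it. Using $a^2-b^2\le 2a(a-b)$ on each square, one gets
\[
f(x)-f(x')\le \tfrac2n\sum_i |y_i-z_i\theta|\,\|(1,\theta)\|\,\|x_i-x_i'\| \;+\; \tfrac2n\sum_i |y_i'-z_i'\overline\theta|\,\|(1,\overline\theta)\|\,\|x_i-x_i'\|,
\]
i.e.\ a piece whose weight depends on $x$ (coming from the convex $x\mapsto r_x(\theta)$) and a piece depending on $x'$ (from the convex $x\mapsto r_x(\overline\theta)$, entering with the opposite sign). The paper then bounds $P[f]-Q_\theta[f]$ \emph{via the primal weak transport cost}, applying $\tilde T_{2,N}^{(i)}$ to the first piece and $\tilde T_{2,N}$ to the second. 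This is what forces the $\alpha^2$-term of the first piece to sit under $P$ (giving $R(\theta)$) and that of the second piece under $Q_\theta$ (giving $Q[r(\overline\theta)]$).

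\medskip
\textbf{Consequence for $K$.} Because the first gradient term is integrated under $P$ in the $x$-variable and under the \emph{prior} $\rho_{\overline\theta}$ in the $\theta$-variable, what has to be computed is $\rho_{\overline\theta}[(1+\|\theta\|^2)R(\theta)]$, not your $Q\rho_{\hat\theta}[(1+\|\theta\|^2)r_x(\theta)]$. This is why the Gaussian moment calculation produces directly the deterministic constants $4d/\beta$, $(1+\|\overline\theta\|^2+(d+2)/\beta)R(\overline\theta)$ and $(\|\overline\theta\|^2+d/\beta)(d-1)/\beta$, with no $\hat\theta$ in sight. The remaining term $(1+\|\overline\theta\|^2)r(\overline\theta)$ comes from the second piece, integrated under $Q$. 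Your plan of integrating everything under the posterior and then ``replacing $\hat\theta$-dependent constants by their oracle counterparts at the cost of additional $\overline R(\hat\theta)$ terms absorbed on the left-hand side'' cannot work as stated: those extra terms live \emph{inside the square root} on the right, not on the left, and they are multiplied by the entropy factor, so they do not cancel against $Q[\overline R(\hat\theta)]$.

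\medskip
\textbf{Handling of $\rho$.} The paper does not use your spectral bound $\|\hat\theta-\overline\theta\|^2\le \rho\,\overline R(\hat\theta)$. Instead it performs the $\rho$-Lipschitz change of variables $(\mathcal Z,\theta)\mapsto(\mathcal Z G^{-1/2},G^{1/2}\theta)$ at the outset, reducing to $G=I_d$; then $\|\hat\theta-\overline\theta\|^2=\overline R(\hat\theta)$ exactly, and $\rho$ enters once and for all as the transport constant $\rho C$.
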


\begin{proof}
Considering the change $(\mathcal Z,\theta)\to (\mathcal ZG^{-1/2},G^{1/2}\theta)$, we assume that the Gram matrix $G$ is the identity matrix. This change of variable is $\rho$-Lipschitz function. Thus $\mathcal ZG^{-1/2}$ satisfies $\tilde T_{2,N}(\rho C)$ and $\tilde T_{2,N}^{(i)}(\rho C)$ using similar arguments than in Lemma 2.1 in \cite{djellout:guillin:wu:2004}. In all the sequel, we thus consider $G=I_d$, $\mathcal Z\in \tilde T_{2,N}(\rho C)$ and $\tilde T_{2,N}^{(i)}(\rho C)$. With this notation, $P[\|\mathcal Z\|_n^2]=d$ and $\|\hat\theta-\overline \theta\|^2=R(\hat\theta)-R(\overline \theta)$.\\
We first study the self-bounding properties of  $f=\overline r$: using  the inequality $x^2-y^2\le 2x(x-y)\le 2\|x\|\|x-y\|$ for any $x,y\in \R$ we obtain
\begin{align*}
f(x)-f(x')&\le   \frac1n\sum_{i=1}^n((y_i-z_i \theta)^2-(y_i'-z_i' \theta)^2+(y_i'-z_i' \overline\theta)^2-(y_i-z_i \overline\theta)^2)\\
&\le  \frac2n\sum_{i=1}^n(|y_i-z_i \theta|\|(1,\theta)\|\|x_i-x_i'\|+|y_i'-z_i '\overline\theta|( \|(1,\overline\theta)\|)\|x_i-x_i'\|).
\end{align*}
We apply the conditional weak transport approach.
By definition of $\tilde W_2 $ and using Cauchy-Schwartz inequality we obtain for any $Q_\theta$ conditional on $\theta$ that 
$$
P[f]- Q_\theta[f]\le  2\|(1,\theta)\|\sqrt{n^{-1}R(\theta)   \tilde W_{2}(Q_\theta,P)}
+2\|(1,\overline\theta)\|\sqrt{n^{-1}Q_\theta [r(\overline \theta)]   \tilde W_{2}(P,Q_\theta)}
$$
As $P$ satisfies $\tilde T_{2,N}(\rho C)$ and $\tilde T_{2,N}^{(i)}(\rho C)$ and using the Cauchy-Schwartz inequality  we obtain
$$
Q_\theta[P[f]-f]\le  4\sqrt{\rho Cn^{-1}\mathcal K(Q_\theta|P)((1+\|\theta)\|^2) R(\theta)   
+(1+\|\overline\theta\|^2) Q_\theta[r(\overline \theta)] )}.
$$
Let $\rho_\theta=\mathcal N_d( \theta,\beta^{-1} I_d)$ for any $\beta>0$. Integrating with respect to $\rho_{\overline \theta}$ yields
\begin{align*}
\rho_{\overline \theta}Q_\theta[P[f]-f]
\le&\,  4\rho_{\overline \theta}\Big[\sqrt{\rho Cn^{-1}\mathcal K(Q_\theta|P)( (1+\|\theta\|^2) R(\theta)  
+(1+\|\overline\theta\|^2) Q_\theta[r(\overline \theta)] )}\Big]\\
\le&\,  4 \sqrt{\rho Cn^{-1}\rho_{\overline \theta}[\mathcal K(Q_\theta|P)](\rho_{\overline \theta}[  (1+\|\theta\|^2) R(\theta)  ]  
+(1+\|\overline\theta\|^2 )Q[r(\overline \theta)] )}.
\end{align*}
Choosing $Q_\theta$ such that  $\rho_{\overline \theta} Q_\theta = Q\rho_{\hat\theta}$ we have $\rho_{\overline \theta}[\mathcal K(Q_\theta|P)]=\mathcal K(Q|P)+Q[\mathcal K(\rho_{\hat\theta}|\rho_{\overline\theta})]$ and $\mathcal K(\rho_{\hat\theta}|\rho_{\overline\theta})]\le \beta/2(R(\hat\theta)-R(\overline \theta))$ and
\begin{align*}
&Q\rho_{\hat \theta}[R(\theta)-R(\overline\theta)-r(\theta)+r(\overline\theta)]\le\\
& 4\sqrt{\rho Cn^{-1}(\mathcal K(Q|P)+ \beta/2Q[R(\hat\theta)-R(\overline \theta)]) \rho_{\overline \theta}[  (1+\|\theta)\|^2) R(\theta)  ] 
+(1+\|\overline\theta\|^2) Q[r(\overline \theta)] )}.
\end{align*}
Now, by Jensen's inequality $Q\rho_{\hat \theta}[R(\theta)]\ge Q [R(\hat \theta)]$ and classical computations give that $Q\rho_{\hat \theta}[r(\theta)]\le r(\hat\theta)+Q[\|\mathcal Z\|_n^2]/\beta\le r(\overline \theta)+Q[\|\mathcal Z\|_n^2]/\beta$. Collecting these bounds, we obtain
\begin{align*}
&Q[R(\hat\theta)-R(\overline\theta)-\|\mathcal Z\|_n^2]/\beta]\le \\
&4\sqrt{\rho Cn^{-1}(\mathcal K(Q|P)+ \beta/2Q[R(\hat\theta)-R(\overline \theta)])\rho_{\overline \theta}[  (1+\|\theta)\|^2) R(\theta)  ]  
+(1+\|\overline\theta\|^2 )Q[r(\overline \theta)] )}.
\end{align*}
To end the proof, let us compute $\rho_{\overline \theta}[(1+\| \theta\|^2) R(\theta) ]$ using the following identity
$$
\rho_{\overline \theta}[(1+\| \theta\|^2)  R(\theta) ] =\rho_{\overline \theta}[ R(\theta) ]+\rho_{\overline \theta}[ \| \theta\|^2 ]R(\overline \theta) +\rho_{\overline \theta}[ \| \theta\|^2R(\theta)-R(\overline\theta)].
$$
Let us decompose the last term:
$$
\rho_{\overline \theta}[ \| \theta\|^2R(\theta)-R(\overline\theta)]=\rho_{\overline \theta}[ \| \theta\|^2\|\hat\theta\overline \theta\|]+2n^{-1}P[\mathcal Y\mathcal Z]\rho_{\overline \theta}[ \| \theta\|^2(\theta-\overline \theta)]
$$
where $\mathcal Y=(Y_1,\ldots,Y_n)$. Simple computations on gaussian random variables give 
\begin{align*}
\rho_{\overline \theta}[R(\theta)]& = R(\overline \theta)+d/\beta\\
\rho_{\overline\theta}[\|\theta\|^2]&=\|\overline \theta\|^2+d/\beta\\
\rho_{\overline \theta}[ \| \theta\|^2(\theta-\overline \theta)]&=2\overline \theta/\beta\\
\rho_{\overline \theta}[ \| \theta\|^2\|\theta-\overline\theta\|^2]&=(\|\overline \theta\|^2+d/\beta)(d-1)/\beta+\|\overline \theta\|^2/\beta+3d/\beta.
\end{align*}
The desired result follows collecting all these bounds and noticing that $4P[\mathcal Y\mathcal Z]\overline\theta \le 2n R(\overline\theta)$.
\end{proof}
In the proof above, we obtain the more general result: for any probability measures $\mu$ and $\nu$ such that there exists $Q_\theta$ satisfying $Q\mu=\nu Q_\theta$ we have:
\begin{equation}\label{eq:ineq1}
Q\mu[\overline R]\le Q\mu[\overline r(\theta)]+4\sqrt{\rho C n^{-1}\mathcal K(Q\mu|P\nu)(\nu[(1+\|\theta\|^2)R(\theta)]+(1+\|\overline \theta\|^2)Q[r(\overline \theta)]}.
\end{equation}
Let us discuss the choices $\mu=\rho_{\hat\theta}$ and $\nu=\rho_{\overline\theta}$ made above.  As soon as $\mu$ is centered in $\hat\theta$, Jensen's inequality yields $Q\mu[R(\theta)]\ge Q[R(\hat\theta)]$. Next, if $\mu$ is sufficiently concentrated around $\hat\theta$ then $Q\mu[r(\theta)-r(\overline\theta)]$ is small as $r(\hat\theta)-r(\overline \theta)<0$. Choosing $\mu$ as the Dirac mass in $\hat\theta$ is excluded by the condition of  existence of some measure $Q_\theta$ satisfying $\nu Q_\theta=Q\mu$. The fact that the support of $\mu$ cannot depend on the observations $(X_i)$ constrain us to choose measures supported on the whole space $\R^d$ in absence of a priori information on $\hat\theta$.  The term $Q\mu[r(\theta)-r(\overline\theta)]$ can be seen as an alternative to the classical entropy metric and VC-dimension approach, see Mc Allester \cite{mcallester:1999}. The measure $\mu$ should be chosen in order to bound this term (and the entropy $\mathcal K(\mu|\nu)$). It leads to Gibbs estimators that are nice alternatives to classical estimators, see Chapter 4 of the textbook of Catoni \cite{catoni:2004} in the iid case,  \cite{alquier:wintenberger:2012,alquier:li:wintenberger} in weakly dependent settings. Here we choose the gaussian measures $\mu=\rho_{\hat\theta}$ and $\nu=\rho_{\overline\theta}$ as in Audibert and Catoni \cite{audibert:catoni:2011} for simplicity because  $\mathcal K(\nu|\mu)=\beta/2\|\hat\theta-\overline \theta\|^2$. This choice leads to estimate the term $Q\mu[r(\theta)-r(\overline\theta)]$ by $Q[\|\mathcal Z\|_n^2]/\beta$. This term can easily be estimated by the sum of $d/\beta$ and a concentration term implying the entropy $\mathcal K(Q|P)$. Thus we obtain  nonexact oracle inequalities:
\begin{cor}\label{cor:io1}
For any $0< \varepsilon<1$ and any $(d+2)/n<\eta<1$ we have with probability $1-\varepsilon$:
$$R(\hat\theta)\le (1+B_1 \eta)R(\overline \theta)+\frac{B_2 d +16\rho C\log(\varepsilon^{-1})}{n\eta}+\frac{B_3}{(n\eta)^2}
$$
where 
\begin{align*}
&B_1=2(3+2\|\overline\theta\|^2+\eta/n),\\
&B_2=2(5+\|\overline \theta\|^2),\\
&B_3=2(d(d-1)+d/n) .
\end{align*}
\end{cor}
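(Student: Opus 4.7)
The plan is to derive Corollary \ref{cor:io1} from Theorem \ref{th:io} in three steps: (a) linearize the square root in Theorem \ref{th:io} via Young's inequality; (b) replace the sample-dependent quantities $Q[\|\mathcal Z\|_n^2]$ and $Q[r(\overline\theta)]$ (hidden inside $Q[K]$) by deterministic counterparts using the Tsirel'son dual form of the weak transport inequality; (c) convert the resulting uniform-in-$Q$ entropic inequality into a $\log(\varepsilon^{-1})$ tail bound via the Donsker--Varadhan duality.

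Writing $\Delta := Q[\overline R(\hat\theta)]$ and applying $2\sqrt{ab}\le sa + b/s$ to the square root in Theorem \ref{th:io}, I obtain
\begin{equation*}
\Bigl(1-\tfrac{\rho C\beta}{ns}\Bigr)\Delta \;\le\; \tfrac{1}{\beta}Q[\|\mathcal Z\|_n^2] + 2s\,Q[K] + \tfrac{2\rho C}{ns}\mathcal K(Q|P).
\end{equation*}
I would identify $\beta = n\eta$ and choose $s$ of order $\eta$ so that the prefactor on the left stays bounded away from zero and the coefficient $2s$ in front of $Q[K]$ matches the target leading constant. Setting $g := \|\mathcal Z\|_n^2$ and $h := r(\overline\theta)$, both are convex functions of the observations with the self-bounding gradient estimates $\|\nabla g\|^2 \le 4g/n$ and $\|\nabla h\|^2 \le 4(1+\|\overline\theta\|^2)h/n$. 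The Tsirel'son inequality \eqref{eq:sg} (which follows from $\tilde T_{2,N}(C)$ via Theorem \ref{th:tsi}), combined with the Donsker--Varadhan formula, then yields $Q[g] \le d + O(\sqrt{\mathcal K(Q|P)/n})$ and $Q[h] \le R(\overline\theta)+O(\sqrt{R(\overline\theta)\mathcal K(Q|P)/n})$; a second Young inequality absorbs the square-root corrections into the existing $\mathcal K(Q|P)$ term without altering the leading constants. The resulting inequality takes the form
$$\Delta \le M + c\,\mathcal K(Q|P),\qquad M = B_1\eta R(\overline\theta)+\tfrac{B_2 d}{n\eta}+\tfrac{B_3}{(n\eta)^2},\quad c=\tfrac{16\rho C}{n\eta},$$
uniformly in probability measures $Q\ll P$. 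The Donsker--Varadhan identity $\sup_Q\{Q[\phi] - \mathcal K(Q|P)\} = \log P[\exp\phi]$ renders this bound equivalent to $P[\exp(c^{-1}(\overline R(\hat\theta)-M))]\le 1$, and Markov's inequality at the threshold $M+c\log(\varepsilon^{-1})$ yields $P(\overline R(\hat\theta) > M+c\log(\varepsilon^{-1}))\le\varepsilon$, which is exactly the stated oracle inequality.

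The main obstacle is the bookkeeping required to identify the precise constants $B_1,B_2,B_3$. The Young parameter $s$, the prior variance $\beta$, and the Tsirel'son concentration constants must be tuned jointly so that (i) the factor $(1-\rho C\beta/(ns))^{-1}$ contributes exactly the leading ``$2$'' visible in $B_1,B_2,B_3$; (ii) the deterministic terms $4d/\beta$ and $(\|\overline\theta\|^2+d/\beta)(d-1)/\beta$ inside $K$ combine to produce the $B_2d/(n\eta)$ and $B_3/(n\eta)^2$ contributions exactly; and (iii) the concentration correction on $Q[r(\overline\theta)]$ supplies the small $\eta/n$ term in $B_1$ without degrading the leading $(3+2\|\overline\theta\|^2)$ coefficient. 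The constraints $(d+2)/n < \eta < 1$ stated in the corollary are precisely what is needed to make these matchings possible.
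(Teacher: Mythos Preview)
Your proposal is correct and follows essentially the same approach as the paper: linearize the square root in Theorem \ref{th:io} by Young's inequality, control the random quantities $Q[\|\mathcal Z\|_n^2]$ and $Q[r(\overline\theta)]$ by the same weak-transport concentration (the paper does this directly rather than invoking Theorem \ref{th:tsi}, but the two are equivalent), and choose $\beta=\lambda=n\eta$ to arrive at $Q[\overline R(\hat\theta)]\le M + c\,\mathcal K(Q|P)$. The only noteworthy difference is your final step: you pass through Donsker--Varadhan to obtain an exponential moment bound and then apply Markov, whereas the paper simply takes $Q$ to be $P$ restricted to the bad event and reads off $\mathcal K(Q|P)=-\log P(A^c)$; the two routes are equivalent and yield the same tail bound.
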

\begin{rmk}
This result extends nonexact oracle inequalities as developed in \cite{lecue:mendelson:2012} to a dependent context but for the OLS only. Instead of the Bernstein inequality used in \cite{lecue:mendelson:2012}, only Tsirel'son inequality is used through the choice $d=N$ and thus nonexact oracle inequality holds without any constraint on $\theta\in \R^d$ and for  $\Gamma_{N,N}(2)$ dependent sequence with nice margins. 
\end{rmk}
\begin{proof}
As for any $a,b>0$ we have $2\sqrt{ab}\le  a\lambda+b/\lambda$ for any $\lambda>0$ then from \eqref{eq:ci} we obtain
$$
Q[\overline R(\hat\theta)-{\|Z\|^2}/{\beta}-K  \lambda/n- \beta \overline R(\hat\theta)/(2\lambda)]- \frac{4\rho C \mathcal K(Q|P)}{\lambda}\le 0.
$$
Notice that by definition of $K$ we have
$$
Q[K]=4\frac d\beta+\Big(1+\|\overline \theta\|^2+\frac {d+2}\beta\Big)R(\overline \theta)+\Big(\|\overline \theta\|^2+\frac d\beta\Big)\frac{d-1}{\beta}+(1+\|\overline \theta\|^2)Q[r(\overline \theta)]
$$
by similar arguments than in the proof of Theorem \ref{th:io} we have 
$$
Q[r(\overline \theta)]- R(\overline\theta)\le 2\sqrt{2\rho CR(\overline \theta)n^{-1}\mathcal K(Q|P)} .
$$
As $P[\|Z\|^2]=d$ we obtain similarly
$$
Q[\|Z\|^2]-d\le 2\sqrt{2\rho Cdn^{-1}\mathcal K(Q|P)} .
$$
Collecting these bounds and using the Cauchy-Schwartz inequality we obtain
\begin{align*}
Q[\|Z\|^2/\beta+  \lambda/n r(\overline \theta)]\le& \,d/\beta +  \lambda/n R(\overline \theta)\\
&+4\sqrt{\rho Cn^{-1}(d/\beta^2+(  \lambda/n)^2 R(\overline \theta))\mathcal K(Q|P)}.
\end{align*}
Using again that $2\sqrt{ab}\le  a\lambda+b/\lambda$, choosing $\beta=\lambda=n\eta$ and by definition of $B_1$, $B_2$ and $B_3$ we have
$$
Q[\overline R(\hat\theta) -B_1 \eta R(\overline \theta)-B_2d/(n\eta)-B_3/(n\eta)^2]\le   \frac{16 \rho C\mathcal K(Q|P)}{ n\eta}.
$$
Choose $Q$ as the probability $P$ restricted to the complementary of the event corresponding to the desired oracle inequality that we denote $A$. Then
$$
\frac{16\rho C\log(\varepsilon^{-1})}{n\eta}\le Q[\overline R(\hat\theta) -B_1 \eta R(\overline \theta)-B_2d/(n\eta)-B_3/(n\eta)^2]
$$
Combining these two inequality, we assert that for this specific $Q$ we have
$
-\log(\varepsilon)\le \mathcal K(Q|P).
$ The relative entropy is computed explicitly $\mathcal K(Q|P)=-\log (1-P(A))$ and thus the desired result follows.
\end{proof}

\subsection{Exact oracle inequalities for $\tilde\Gamma(2)$-weakly-dependent sequences}

Let us now give an equivalent of \eqref{eq:ineq1} when $d=d'=1$. Then any $f$ has the self-bounding property $
f(x)-f(y)\le |f(x)|1_{x\neq y}+|f(y)|1_{x\neq y}$. Following the lines of the proof of \eqref{eq:ineq1} with $f=\overline r$ we obtain easily
\begin{equation}\label{eq:ineq2}
Q\mu[\overline R]\le Q\mu[\overline r]+2\sqrt{2\rho C\mathcal K(Q\mu|P\nu)(P\nu[ \overline r^2]+Q\mu[ \overline r^2])}.
\end{equation}
For the specific choice $\mu=\rho_{\hat\theta}$ and $\nu=\rho_{\overline\theta}$ we use computations given in Lemma 1.2 in the supplementary material of \cite{audibert:catoni:2011}: for any $\theta \in\R^d$ we have
$$
\rho_{\theta }[ \overline r^2]\le  5\, \overline r(\theta )^2+\frac{4\|\mathcal Z
\|_n^2}{n\beta} r(\theta ) +\frac{4\|\mathcal Z\|_n^4}{n\beta^2}
$$
where $\|\mathcal Z\|_n^4=n^{-1}\sum_{i=1}^n\|Z_i\|^4$.
The quantities $Q[\|\mathcal Z\|_n^2r(\hat\theta)]$ and $Q[\|\mathcal Z\|_n^4]$ can be difficult to estimate for probability measures $Q$. Let us work under a Bernstein assumption that estimates the variance of the excess of risk with its expectation \cite{bartlett:mendelson:2006}. It links the set of parameters $\Theta\subseteq\R^d$ and the support of $P$: there exists some finite constant $B>0$ such that
\begin{equation}\label{eq:span}
B=\sup_{\theta\in\Theta}\frac{\sum_{i=1}^n\|Z_i\theta\|_\infty}{\sum_{i=1}^nP[Z_i\theta]^2}.
\end{equation}
This Bernstein assumption was introduced for the iid   in \cite{audibert:catoni:2011}. Under \eqref{eq:span} and the fact that we assumed $P[\|\mathcal Z\|^2]=d$    we have
$
\|\mathcal Z
\|_n^2\le Bd$ and $\|\mathcal Z
\|_n^2\le (Bd)^2
$. Moreover, using computations given in the supplementary material of  \cite{audibert:catoni:2011} we obtain easily that
$$
\overline r(\theta)^2\le n^{-1}(2B^2+8Br(\overline \theta))\overline R(\theta).
$$
It leads to the following equivalent of Theorem \ref{th:io} 
\begin{thm}\label{th:hd}
If condition \eqref{eq:span} holds, we have
\begin{multline*}
Q[\overline R(\hat\theta)]\le \frac{Bd}{\beta}+2\sqrt{2\rho Cn^{-1}(\mathcal K(Q|P)+\beta Q[\overline R(\hat\theta)]/2)}\times\\
\sqrt{Q[(10B^2+40Br(\overline \theta))\overline R(\hat\theta)]+4Bd(R(\overline \theta)+Q[r(\overline\theta)])/\beta+8(Bd/\beta)^2}.
\end{multline*}
\end{thm}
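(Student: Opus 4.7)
The strategy mirrors the proof of Theorem \ref{th:io}, substituting the Hamming-metric inequality \eqref{eq:ineq2} for its euclidean counterpart \eqref{eq:ineq1}. As a preliminary I perform the change of variables $(\mathcal Z,\theta)\mapsto(\mathcal ZG^{-1/2},G^{1/2}\theta)$ which normalises the Gram matrix to the identity, replaces $C$ by $\rho C$ (exactly as at the start of the proof of Theorem \ref{th:io}) and, crucially, yields the identity $\|\hat\theta-\overline\theta\|^2=\overline R(\hat\theta)$. I then apply \eqref{eq:ineq2} with $f=\overline r$ and the gaussian choice $\mu=\rho_{\hat\theta}$, $\nu=\rho_{\overline\theta}$ of common precision $\beta$. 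Jensen's inequality bounds the left-hand side below by $Q[\overline R(\hat\theta)]$ using convexity of $R$ in $\theta$, while the gaussian shift identity $\rho_{\hat\theta}[r]=r(\hat\theta)+\|\mathcal Z\|_n^2/\beta$, the optimality $r(\hat\theta)\le r(\overline\theta)$, and the almost sure bound $\|\mathcal Z\|_n^2\le Bd$ coming from \eqref{eq:span} together give $Q\rho_{\hat\theta}[\overline r]\le Bd/\beta$, which is the first term on the right-hand side of the claim.

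The entropy factor decomposes as $\mathcal K(Q\rho_{\hat\theta}|P\rho_{\overline\theta})=\mathcal K(Q|P)+Q[\mathcal K(\rho_{\hat\theta}|\rho_{\overline\theta})]$, and the explicit gaussian formula $\mathcal K(\rho_{\hat\theta}|\rho_{\overline\theta})=\beta\|\hat\theta-\overline\theta\|^2/2=\beta\overline R(\hat\theta)/2$ produces the first square-root factor of the claim. The variance factor $P\rho_{\overline\theta}[\overline r^2]+Q\rho_{\hat\theta}[\overline r^2]$ is handled via the Audibert--Catoni moment identity recalled immediately before the statement, evaluated at $\theta=\overline\theta$ (where $\overline r(\overline\theta)=0$ annihilates the quadratic head term) and at $\theta=\hat\theta$ under $Q$. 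Replacing $\|\mathcal Z\|_n^2$ and $\|\mathcal Z\|_n^4$ by their uniform upper bounds $Bd$ and $(Bd)^2$ afforded by \eqref{eq:span}, and then applying the self-bounding bound $\overline r(\theta)^2\le n^{-1}(2B^2+8Br(\overline\theta))\overline R(\theta)$ also recalled in the text to the surviving quadratic term, produces exactly the contributions $Q[(10B^2+40Br(\overline\theta))\overline R(\hat\theta)]$, $4Bd(R(\overline\theta)+Q[r(\overline\theta)])/\beta$ and $8(Bd/\beta)^2$; the common factor $n^{-1}$ in each of these migrates, after one square-root, into the first square root of the claim.

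The main obstacle is the bookkeeping in this variance estimate: one has to track that the cross term $Q[\|\mathcal Z\|_n^2\,r(\hat\theta)]$ collapses neatly to $Bd\,Q[r(\overline\theta)]$ by combining $r(\hat\theta)\le r(\overline\theta)$ with \eqref{eq:span}, and that the $P\rho_{\overline\theta}$ side contributes only the linear and constant terms in $\beta^{-1}$ without any stray moments of $\mathcal Z$. Once these contributions are assembled and substituted into \eqref{eq:ineq2}, the claimed implicit inequality in $Q[\overline R(\hat\theta)]$ drops out directly.
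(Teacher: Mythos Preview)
Your proposal is correct and follows essentially the same approach as the paper: the paper does not give a formal proof environment for this theorem but derives it through the discussion immediately preceding the statement, combining \eqref{eq:ineq2} with the Audibert--Catoni bound $\rho_\theta[\overline r^2]\le 5\overline r(\theta)^2+4\|\mathcal Z\|_n^2 r(\theta)/(n\beta)+4\|\mathcal Z\|_n^4/(n\beta^2)$, the self-bounding estimate $\overline r(\theta)^2\le n^{-1}(2B^2+8Br(\overline\theta))\overline R(\theta)$, and the uniform bounds $\|\mathcal Z\|_n^2\le Bd$, $\|\mathcal Z\|_n^4\le (Bd)^2$ coming from \eqref{eq:span}. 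Your bookkeeping of the two contributions $P\rho_{\overline\theta}[\overline r^2]$ and $Q\rho_{\hat\theta}[\overline r^2]$, the use of $r(\hat\theta)\le r(\overline\theta)$ in the cross term, and the migration of the common $n^{-1}$ into the first square root are exactly what the paper intends.
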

In the above estimate the terms involving $r(\overline\theta)$ are nuisance terms without additional condition on $\overline \theta$. However, if this term is bounded then the last term of the inequality is proportional to the excess risk $Q[\overline R(\hat\theta)]$. Similarly, in the classical approach  \cite{bartlett:mendelson:2006}, the excess risk also appears in the concentration under the Bernstein condition that controls this variance term by $\overline R(\hat\theta)$. It is the major advantage considering the Hamming distance (Bernstein inequality) compared with the Euclidian distance (Tsirel'son's inequality) where instead of $Q[\overline R(\hat\theta)]$ the term $Q[R(\hat\theta)]$ appeared because $\overline r$ is self-bounding for $1_2$ but only $r$ for $N$. As $Q[\overline R(\hat\theta)]$ is the quantity of interest, we  obtain 
\begin{cor}\label{cor:eoi}
If condition \eqref{eq:span} holds, for any $0< \varepsilon<1$ and any $M>0$ we have with probability $1-\varepsilon$:
\begin{multline*}R(\hat\theta)\le  R(\overline \theta)
+160\frac{B^2+4BM}{n}\times\\\times \Big(Bd + 8\rho C( \log(\varepsilon^{-1}) -\log P(r(\overline \theta)> M))+\frac{d(R(\overline \theta)+M)}{10B+40M}+\frac{8(Bd)^2}{n}\Big)
.\end{multline*}
\end{cor}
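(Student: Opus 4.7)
The plan is to derive Corollary~\ref{cor:eoi} from Theorem~\ref{th:hd} by combining three standard maneuvers: a truncation that tames the nuisance factor $r(\overline\theta)$, an AM-GM decoupling of the square-root product, and the usual PAC-Bayes conversion of an in-expectation bound into a confidence statement via a well-chosen restriction of $P$.

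First I would truncate. The right-hand side of Theorem~\ref{th:hd} contains the factor $Q[(10B^2+40B r(\overline\theta))\overline R(\hat\theta)]$, which is harmless only when $r(\overline\theta)$ is controlled. Given $M>0$, let $E=\{r(\overline\theta)\le M\}$ and restrict attention to probability measures $Q$ supported in $E$. Then $Q[r(\overline\theta)]\le M$ and
\[
Q\!\left[(10B^2+40Br(\overline\theta))\overline R(\hat\theta)\right]\le(10B^2+40BM)\,Q[\overline R(\hat\theta)],
\]
so the quantity $S$ inside the second square root in Theorem~\ref{th:hd} is upper-bounded by $(10B^2+40BM)Q[\overline R(\hat\theta)]+4Bd(R(\overline\theta)+M)/\beta+8(Bd/\beta)^2$. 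Next I would split the product of square roots via $2\sqrt{uv}\le\lambda u+v/\lambda$ with $u=(2\rho C/n)(\mathcal K(Q|P)+\beta Q[\overline R(\hat\theta)]/2)$ and $v=S$, and balance $\lambda$ so that both appearances of $Q[\overline R(\hat\theta)]$ on the right combine into a coefficient bounded away from $1$. Concretely, taking $\lambda=4(10B^2+40BM)$ and $\beta=n/(16\rho C(10B^2+40BM))$ kills a factor $1/2$ of $Q[\overline R(\hat\theta)]$, and rearrangement yields a linear bound of the form
\[
Q[\overline R(\hat\theta)]\le\frac{160(B^2+4BM)}{n}\Bigl(Bd+8\rho C\,\mathcal K(Q|P)+\frac{d(R(\overline\theta)+M)}{10B+40M}+\frac{8(Bd)^2}{n}\Bigr),
\]
matching exactly the shape of the corollary.

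Finally, the probability-to-confidence conversion. Let $A$ denote the event on which the claimed oracle inequality holds, and argue by contradiction: if $P(A^c)>\varepsilon$, pick $Q$ as $P$ restricted (and renormalized) to $A^c\cap E$. On the support of $Q$ the random quantity $\overline R(\hat\theta)$ strictly exceeds the right-hand side of the corollary, so $Q[\overline R(\hat\theta)]$ strictly exceeds the linear upper bound above once $\mathcal K(Q|P)=-\log P(A^c\cap E)$ is estimated in terms of $\log\varepsilon^{-1}$ and $\log P(r(\overline\theta)>M)^{-1}$, as appears in the statement. This contradicts the bound obtained in the previous step and forces $P(A^c)\le\varepsilon$. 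The main obstacle is purely bookkeeping: getting exactly the constants $160$, $8$, $10B+40M$ and $8(Bd)^2/n$ requires the specific choices of $\lambda$ and $\beta$ above rather than an up-to-universal-constant calibration, and the handling of the entropy estimate in the conversion step must produce the additive decomposition $\log\varepsilon^{-1}-\log P(r(\overline\theta)>M)$ rather than the coarser $-\log(\varepsilon-P(r(\overline\theta)>M))$ that a naive union bound on $P(A^c\cap E)$ would give.
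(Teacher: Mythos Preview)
Your truncation idea is natural, but the way you implement it---restricting $Q$ to the event $E=\{r(\overline\theta)\le M\}$ and then applying Theorem~\ref{th:hd} as a black box---creates a genuine gap in the conversion step that is not ``purely bookkeeping.'' With $Q$ forced to live on $E$, the only admissible choice in the contradiction argument is $Q=P|_{A^c\cap E}$, giving $\mathcal K(Q|P)=-\log P(A^c\cap E)$. Chasing the inequalities then yields $P(A^c\cap E)<\varepsilon\,P(E^c)$ (or $\varepsilon\,P(E)$, depending on which sign convention one adopts), and hence only $P(A^c)\le P(A^c\cap E)+P(E^c)\le (1+\varepsilon)P(E^c)$, which does not give $P(A^c)\le\varepsilon$. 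There is no algebraic rearrangement that extracts the clean additive decomposition from $-\log P(A^c\cap E)$: the decomposition has to be present in the in-expectation bound \emph{before} you specialize $Q$.

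The paper obtains this by a genuinely different device. Instead of applying Theorem~\ref{th:hd} directly, it reopens its derivation and replaces one copy of $P$ by $P_A=P|_{\{r(\overline\theta)\le M\}}$ inside the weak transport costs. Since $P_A$ need not satisfy a weak transport inequality, the paper invokes the triangle inequality for $\tilde W_{2}$ (equation~\eqref{ti1}) to write $\tilde W_2(P_A,Q_\theta)\le\tilde W_2(P_A,P)+\tilde W_2(P,Q_\theta)$ and similarly for the reversed order; both pieces are then controlled through the transport inequalities satisfied by $P$, producing an additive $-\log P(A)$ alongside $\mathcal K(Q_\theta|P)$. The resulting linear bound holds for \emph{all} $Q$, so one may take $Q=P|_{A^c}$ with $\mathcal K(Q|P)=-\log P(A^c)$; the $-\log P(A)$ terms then cancel identically in the comparison and one reads off $P(A^c)<\varepsilon$ directly. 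This use of the weak-transport triangle inequality to route through the truncated measure $P_A$---rather than truncating $Q$---is the idea missing from your proposal.
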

\begin{rmk}
Except  \eqref{eq:span}, the exact oracle inequality holds for any $\tilde \Gamma(2)$-weakly dependent sequences without  assumptions on the margins (because any probability measure satisfies $\tilde T_{2,1 }(1)$).  These oracle inequalities are knew, even in the iid case. We refer the reader to  \cite{audibert:catoni:2011} for estimates of the term $\log P(r(\overline \theta)> M)$ in the iid case under finite moments  of order $4$ only.
\end{rmk}
\begin{proof}
Let us denote $A=\{r(\overline \theta)\le M\}$ and $P_A$ the restriction of $P$ on $A$ defined as $P_A(B)=P(B\cap A)$ for any measurable set $B$ on $E^n$. We do not know wether $P_A$ satisfies weak transport inequalities. However, a similar reasoning than to obtain \eqref{eq:ineq2}  yields 
\begin{multline*}
Q[\overline R(\hat\theta)]\le Bd/\beta +\rho_{\hat\theta}\Big[\sqrt{(4Bd\overline R(\overline\theta)/\beta +(4Bd/\beta)^2)n^{-1}  \tilde W_2(Q_\theta,P_A)}\\
+\sqrt{((10B^2+40BM)Q[\overline R(\hat\theta)]+4BdM/\beta +(4Bd/\beta)^2)n^{-1}  \tilde W_2(P_A,Q_\theta)}\Big].
\end{multline*}
We now use the triangular inequality of the weak transport cost \eqref{ti1}:
\begin{align*}
\tilde W_2(P_A,Q_\theta)&\le \tilde W_2(P_A,P)+\tilde W_2(P,Q_\theta),\\
\tilde W_2(Q_\theta,P_A)&\le \tilde W_2(Q_\theta,P)+\tilde W_2(P,P_A).
\end{align*}
Because $P$ satisfies $\tilde T_2(\rho C)$ and $\tilde T_2^{(i)}(\rho C)$, both RHS terms are estimated by 
$$
\sqrt{2\rho C H(P_A|P)}+\sqrt{2\rho C \mathcal K(Q_\theta|P)}\le 4\sqrt{\rho C(\mathcal K(Q_\theta|P)-\log P(A))}
$$
Collecting all these bounds and using the Cauchy-Schwartz inequality we obtain
\begin{multline*}
Q[\overline R(\hat\theta)]\le Bd/\beta + 4\Big[\sqrt{2\rho C n^{-1}( \mathcal K(Q|P)+\beta Q[\overline R(\hat\theta)]/2-\log P(A))}\times\\
\sqrt{((10B^2+40BM)Q[\overline R(\hat\theta)]+4Bd(R(\overline\theta)+M)/\beta +8(Bd/\beta)^2)}.
\end{multline*}
Using several times the inequality $2\sqrt{ab}\le  a\lambda+b/\lambda$ with $\lambda=\beta=n(40B^2+160BM)^{-1}$ yields
$$
Q[\overline R ]/4\le40\frac{B^2+4BM}{n}\Big(Bd + 8\rho C( \mathcal K(Q|P) -\log P(A))+\frac{d(R(\overline \theta)+M)}{10B+40M}+\frac{8(Bd)^2}{n}\Big)
$$
We conclude as in the proof of Corollary \ref{cor:io1} choosing $Q$ as $P$ restricted to the complementary of the event corresponding to the desired oracle inequality.
\end{proof}

\subsubsection*{Acknowledgments}
I thank the GT EVOL's team, especially F. Bolley, I. Gentil, C. Leonard and P.-M. Samson, for presenting and explaining patiently to me the transport inequalities subject. I also thank P. Alquier and G. Lecu\'e for helpful discussions on the PAC-bayesian approach and (non)exact oracle inequalities respectively. I am also grateful to N. Gozlan for indicating me the minimax identity of the weak transport studied here and the one of \cite{marton:1996a}. Finally, I would like to deeply thank  F. Merlev\`ede for pointing out errors in a previous version.


\begin{thebibliography}{99}
%AAAAAAAAAAAAAAAAAAAAAAAAAAAAAAAAAAAAAAAAAAAAAAAAAAAAAAAAAAAAA

\bibitem{adamczak:2008}
{\sc Adamczak, R.}\ (2008)
A tail inequality for suprema of unbounded empirical processes with applications to Markov chains. {\em Electron. J. Probab.} {\bf 13 (34)}, 1000--1034.

\bibitem{alquier:li:wintenberger}
{\sc Alquier, P., Li, X. and Wintenberger, O.}\ (2012)
Prediction of time series by statistical learning: general losses and
fast rates, preprint available on arxiv.org.

\bibitem{alquier:wintenberger:2012}
{\sc Alquier, P. and Wintenberger, O.}\ (2012)
Model selection for weakly dependent time series forecasting,
{\em Bernoulli }{\bf 18 (3)},  883--913.

\bibitem{andrews:1984}
{\sc Andrews, D.W.K.}\ (1984)
Nonstrong mixing autoregressive processes. 
{\em J. Appl. Probab.} {\bf 21}, 930--934.

\bibitem{audibert:catoni:2011}
{\sc Audibert, J.-Y.  and Catoni, O.}\ (2011)
Robust linear least squares regression
{\em Ann. Statist.} {\bf 39 (5)}, 2766--2794.

%BBBBBBBBBBBBBBBBBBBBBBBBBBBBBBBBBBBBBBBBBBBBBBBBBBBBBBBBBBBBB

\bibitem{bardet:wintenberger:2009} {\sc Bardet, J.-M. et Wintenberger, O.}\ (2009) Asymptotic normality of the Quasi Maximum Likelihood Estimator for multidimensional causal processes. \emph{Ann. Statist.} {\bf 37}, 2730--2759.
\bibitem{bartlett:mendelson:2006}
{\sc Bartlett, P. L.  and Mendelson, S.}\ (2006)
Empirical minimization. 
{\em Probab. Theory Related Fields}
{\bf 135 (3)}, 311--334.
\bibitem{bertail:clemencon:2010}
{\sc Bertail, P. and Cl\' emencon, S.}\ (2010) 
Sharp bounds for the tails of functionals of Markov chains. 
{\em Theory Probab. Appl.} {\bf 54 (3)}, 505--515.
\bibitem{bobkov:ledoux:1997}
{\sc Bobkov, S. and Ledoux, M.}\ (1997)
Poincar\'e's inequalities and Talagrand's concentration phenomenon for the exponential distribution. 
{\em Probab. Theory and Related Fields}, {\bf 107(3)}, 383-400.
\bibitem{bobkov:gotze:1999}
{\sc Bobkov, S. G. and G\"otze, F.}\  (1999)
 Exponential integrability and transportation cost related to logarithmic Sobolev inequalities. {\em J. Funct. Anal.} {\bf 163 (1)}, 1--28.
\bibitem{bobkov:gotze:1999a}
{\sc Bobkov, S. G. and G\"otze, F.}\ (1999)
Discrete isoperimetric and Poincaré-type inequalities. 
{\em Probab. Theory Related Fields} {\bf 114 (1999)}, 245--277. 
\bibitem{bobkov:gentil:ledoux:2001}
{\sc Bobkov, S. G., Gentil, I., Ledoux, M.}\ (2001)
Hypercontractivity of Hamilton-Jacobi equations. 
{\em J. Math. Pures Appl.} {\bf 80 (7)}, 669--696.
\bibitem{bolley:villani:2005}
{\sc Bolley, F. and Villani, C.}\ (2005)
Weighted Csisz\'ar-Kullback-Pinsker inequalities and applications to transportation inequalities.
{\em Ann. Fac. Sci. Toulouse Math.} {\bf 14 (3)} 331--352.
\bibitem{boucheron:lugosi:massart:2009}
{\sc Boucheron, S., Lugosi, G. and Massart, P.}\ (2009)
On concentration of self-bounding functions. 
{\em Electron. J. Probab.} {\bf 14 (64)}, 1884--1899.
\bibitem{boucheron:lugosi:massart:2000}
{\sc Boucheron, S., Lugosi, G. and Massart, P.}\ (2000) 
A sharp concentration inequality with applications. 
{\em Random Struct. Algor.} {\bf 16 (3)}, 277--292.
\bibitem{boucheron:lugosi:massart:2013}
{\sc Boucheron, S., Lugosi, G. and Massart, P.}\ (2013) 
{\em Concentration Inequalities: A Nonasymptotic Theory of Independence.} Oxford University Press.

%CCCCCCCCCCCCCCCCCCCCCCCCCCCCCCCCCCCCCCCCCCCCCCCCCCCCCCC

\bibitem{catoni:2004}
{\sc Catoni, O.}\ (2004)
Statistical Learning Theory and Stochastic Optimization, 
{\em Lecture
Notes in Mathematics}, Springer, Berlin.

%DDDDDDDDDDDDDDDDDDDDDDDDDDDDDDDDDDDDDDDDDDDDDDDDDDDDDDD


\bibitem{dedecker:prieur:2005}
{\sc Dedecker, J. and Prieur, C.}\ (2005) 
New dependence coefficients. Examples and applications to statistics. 
{\em Probab. Theory Related Fields}, {\bf 132(2)}, 203--236. 
\bibitem{dedecker:prieur:raynaud:2006}
{\sc Dedecker, J. Prieur, C. and Raynaud De Fitte, P.}\ (2006)
Parametrized Kantorovich-Rubinstein theorem and application to the coupling of random variables.  
{\em Lecture Notes in Statist.}, {\bf 187}, Springer, New York.
\bibitem{djellout:guillin:wu:2004}
{\sc Djellout, H., Guillin, A. and Wu, L.}\  (2004)
Transportation cost-information inequalities and applications to random dynamical systems and diffusions.
{\em Ann. Probab.} {\bf 32 (3B)}, 2702--2732.
\bibitem{doukhan:wintenberger:2008}
{\sc Doukhan, P. and Wintenberger, O.}\ (2008)
Weakly dependent chains with infinite memory. 
{\em Stochastic Process. Appl.} {\bf 118 (11)}, 1997--2013.
\bibitem{duflo:1997}
{\sc Duflo, M.}\ (1997) 
Random iterative models. 
{\em Applications of Mathematics}, {\bf 34}. Springer-Verlag, Berlin.

%GGGGGGGGGGGGGGGGGGGGGGGGGGGGGGGGGGGGGGGGGGGGGGGGGG

\bibitem{gao:guillin:wu:2012}
{\sc Gao, F., Guillin, A., and Wu, L.}\ (2012) Bernstein types concentration inequalities for symmetric
Markov processes. To appear in {\em SIAM Theor. Probab. Appl.}

\bibitem{goldstein:1979}
{\sc Goldstein, S.}\ (1979) 
Maximal coupling. {\em Z. Wahrsch. Verw. Gebiete} {\bf 46}, 193--204.



\bibitem{gozlan:leonard:2010}
{\sc Gozlan, N.  and L\'eonard, C.}\ (2010)
Transport inequalities. A survey. 
{\em Markov Processes and Related Fields}, {\bf 16}, 635--736.

\bibitem{gozlan:roberto:samson:tetali:2012}
{\sc Gozlan, N., Roberto, C., Samson, P.-M. et Tetali, P.}\ (2012)
Displacement convexity of entropy and related inequalities on graphs. Preprint.

\bibitem{gozlan:2012}
{\sc Gozlan, N.}\ (2012) 
Transport-Entropy inequalities on the line. 
{\em Electron. J. Probab}, {\bf 17}, 1--18.

\bibitem{guillin:leonard:wu:yao:2009}
{\sc Guillin, A., L\'eonard, C., Wu, L. and Yao, N.}
Transportation-information inequalities for Markov processes. 
{\em Probab. Theory Related Fields} {\bf 144 (3-4)}, 669--695.

%IIIIIIIIIIIIIIIIIIIIIIIIIIIIIIIIIIIIIIIIIIIIIIIIII

\bibitem{ibragimov:1962}
{\sc Ibragimov, I. A.}\ (1962)
Some limit theorems for stationary processes. 
{\em Theory Probab. Appl.} {\bf 7}, 349--382.

%JJJJJJJJJJJJJJJJJJJJJJJJJJJJJJJJJJJJJJJJJJJJJJJJJJJJJJJJJJJJJJJJJJJJJJJJJJJJJJ

\bibitem{johnson:schechtman:1991}
{\sc Johnson, W. B. and Schechtman, G.}\ (1991) 
Remarks on Talagrand’s deviation inequality for Rademacher functions. 
In {\em Functional analysis} {\bf (pp. 72-77)} Springer Berlin Heidelberg.
\bibitem{joulin:ollivier:2010}
{\sc Joulin, A. and Ollivier, Y.}\ (2010)
Curvature, concentration and error estimates for Markov chain Monte Carlo. 
{\em Ann. Probab.} {\bf 38 (6)}, 2418--2442.


%KKKKKKKKKKKKKKKKKKKKKKKKKKKKKKKKKKKKKKKKK

\bibitem{kontorovich:ramanan:2008}
{\sc Kontorovich, A. and Ramanan, K.}\ (2008) 
Concentration Inequalities for Dependent Random Variables via the Martingale Method. 
{\em Ann. Probab.} {\bf 36(6)}, 2126--2158.


%LLLLLLLLLLLLLLLLLLLLLLLLLLLLLLLLLLLLLLLLLLLLLLLLLLLLLLLLLLLLLLLLLLLLL

\bibitem{lecue:mendelson:2012}
{\sc Lecu\'e, G. and Mendelson, S.}\ (2012)
General nonexact oracle inequalities for classes with a subexponential envelope.
{\em Ann. Statist.} {\bf 40 (2)}, 832--860.
\bibitem{ledoux:1996}
{\sc Ledoux, M.}\ (1996)
On Talagrand's deviation inequalities for product measures. 
{\em ESAIM Probab. Statist.} {\bf 1}, 63--87.
\bibitem{ledoux:2005}
{\sc Ledoux, M.}\ (2005) 
{\em The concentration of measure phenomenon} {\bf 89}, AMS Bookstore.
\bibitem{lezaud:1998}
{\sc Lezaud, P.}\ (1998) 
Chernoff-type bound for finite Markov chains. 
{\em Ann. Appl. Probab.} {\bf 8 (3)}, 849--867.

%MMMMMMMMMMMMMMMMMMMMMMMMMMMMMMMMMMMMMMMMMMMMMM

\bibitem{maurey:1991}
{\sc Maurey, B.}\ (1991) 
Some deviation inequalities. {\em GAFA}, {\bf 1(2)}, 188--197.

\bibitem{marton:1986}
{\sc Marton K.}\ (1986)
A simple proof of the blowing-up lemma. {\em IEEE Trans. Inform. Theory} {\bf 32(3)}, 445--446.

\bibitem{marton:1996}
{\sc Marton K.}\ (1996)
Bounding $\overline d$-distance by informational divergence: a method to prove measure concentration. {\em Ann. Probab.} {\bf 24 (2)}, 857--866. 

\bibitem{marton:1996a}
{\sc Marton, K.}\ (1996)
A measure concentration inequality for contracting Markov chains. 
{\em Geom. Funct. Anal.} {\bf 6 (3)}, 556--571.

\bibitem{marton:2004}
{\sc Marton, K.}\ (2004)
Measure concentration for Euclidean distance in the case of dependent random variables. 
{\em Ann. Probab.} {\bf 32 (3B)}, 2526--2544.

\bibitem{massart:2007}
{\sc Massart, P.}\ (2007),
{\em Concentration inequalities and model selection}. Springer, Berlin.

\bibitem{mcallester:1999}
{\sc McAllester, D. A.}\ (1999)
PAC-Bayesian model averaging. 
{\em Proceedings of the Twelfth Annual Conference on Computational Learning Theory}, 164--170, ACM, New York.

\bibitem{mcdiarmid:1989}
{\sc McDiarmid C.}\ (1989)
On the method of bounded differences, in: Surveys of Combinatorics, {\em Siemons J. (Ed.)}, {\bf Lect. Notes Series 141}, London
Math. Soc.

%RRRRRRRRRRRRRRRRRRRRRRRRRRRRRRRRRRRRRRRRRRRRRRRRRRRRRR

\bibitem{rio:1998}
{\sc Rio, E.}\ (1998)
Processus empiriques absolument réguliers et entropie universelle. 
{\em Prob. Th. Rel. Fields,}  {\bf 111}, 585--608.

\bibitem{rio:2000}
{\sc Rio, E.}\ (2000)
In\'egalit\'es de Hoeffding pour les fonctions lipschitziennes de suites d\'ependantes.  {\em C. R. Acad. Sci. Paris S\'er. I Math.} {\bf 330 (10)}, 905--908.

\bibitem{ruschendorf:1985}
{\sc R\"uschendorf, L.}\ (1985)
The Wasserstein Distance and Approximation Theorems, 
{\em Z.Wahrsch. Verw. Gebiete}
{\bf 70}, 117--129.
%SSSSSSSSSSSSSSSSSSSSSSSSSSSSSSSSSSSSSSSSSSSSSSSSSSSSSSSSSSSS

\bibitem{samson:2000}
{\sc Samson, P.-M.}\ (2000)
 Concentration of measure inequalities for Markov chains and $\Phi$-mixing processes.
{\em Ann. Probab.} {\bf 28 (1)}, 416--461.

%TTTTTTTTTTTTTTTTTTTTTTTTTTTTTTTTTTTTTTTTTTTTTTTTTTTTTTTTTTTT

\bibitem{talagrand:1995}
{\sc Talagrand, M.}\ (1995) 
Concentration of measure and isoperimetric inequalities in product spaces.
{\em Publications Math\'ematiques de l'I.H.E.S.}, {\bf 81}, 73--205.
\bibitem{tsirelson:1985}
{\sc Tsirel'son, B. S.}\ (1985)
A geometric approach to a maximum likelihood estimation for
infinite-dimensional Gaussian location, II, 
{\em Theory Probab. Appl.} {\bf 30}, 820--828.


%VVVVVVVVVVVVVVVVVVVVVVVVVVVVVVVVVVVVVVVVVVVVVVVVVVVVVVVVVVVV

\bibitem{villani:2009}
{\sc Villani, C.}\ (2009)
{\em Optimal transport, old and new.} Springer-Verlag, Berlin, 2009. 

%WWWWWWWWWWWWWWWWWWWWWWWWWWWWWWWWWWWWWWWWWW

\bibitem{wintenberger:2010}
{\sc Wintenberger, O.}\ (2010)
Deviation inequalities for sums of weakly dependent time series.
{\em Electron. Commun. Probab.} {\bf 15}, 489--503.

\end{thebibliography}
\end{document}